\documentclass[10pt,a4paper]{amsart}
\usepackage[english]{babel}
\usepackage[utf8x]{inputenc}
\usepackage{ucs}
\usepackage{tikz}
\usepackage{amsmath}
\usepackage{amsthm}
\usepackage{amsfonts}
\usepackage{amssymb}
\usepackage{hyperref}
\usepackage{dsfont}
\usepackage{mathrsfs}
\usepackage{eqnarray}
\write18{}
\usepackage{xcolor}
\usepackage{graphicx}
\usepackage{pdfpages} 
\usepackage[nothm]{thmbox}
\usepackage{esint}
\usepackage[all]{xy}
\usepackage{faktor}

\hypersetup{
    colorlinks=true,
    linkcolor=red,
    filecolor=magenta,      
    urlcolor=cyan,
}

\numberwithin{equation}{section}

\setlength\parindent{0pt}

\newcommand{\D}{\mathcal{D}}

\newcommand{\QQ}{\mathbb{Q}}

\newcommand{\ZZ}{\mathbb{Z}}
\newcommand{\RR}{\mathbb{R}}
\newcommand{\CC}{\mathbb{C}}
\newcommand{\NN}{\mathbb{N}}

\newcommand{\tends}[1]{\underset{#1}{\longrightarrow}}

\newcommand{\quotient}[2]{{\raisebox{.2em}{$#1$}\left/\raisebox{-.2em}{$#2$}\right.}}

\makeatletter
\DeclareRobustCommand*{\mfaktor}[3][]
{
   { \mathpalette{\mfaktor@impl@}{{#1}{#2}{#3}} }
}
\newcommand*{\mfaktor@impl@}[2]{\mfaktor@impl#1#2}
\newcommand*{\mfaktor@impl}[4]{
   \settoheight{\faktor@zaehlerhoehe}{\ensuremath{#1#2{#3}}}%
   \settoheight{\faktor@nennerhoehe}{\ensuremath{#1#2{#4}}}%
      \raisebox{-0.5\faktor@zaehlerhoehe}{\ensuremath{#1#2{#3}}}%
      \mkern-4mu\diagdown\mkern-5mu%
      \raisebox{0.5\faktor@nennerhoehe}{\ensuremath{#1#2{#4}}}%
}
\makeatother

\DeclareMathOperator{\SL}{SL}

\DeclareMathOperator{\Diff}{Diff}

\theoremstyle{definition}

\newtheorem{definition}[equation]{Definition}
\newtheorem{remark}[equation]{Remark}

\theoremstyle{theorem}

\newtheorem{theoreme}[equation]{Theorem}
\newtheorem{proposition}[equation]{Proposition}

\newtheorem{conjecture}{Conjecture}
\newtheorem{lemma}[equation]{Lemma}

\newtheorem{problem}{Problem}

\title{$\mathrm{SL}_2(\mathbb{R})$-dynamics on the moduli space of one-holed dilation tori}

\author{Adrien Boulanger}
\author{Selim Ghazouani}

\thanks{The first author was partially founded by the ERC n°647133 'IChaos'.}

\date{}

\begin{document}

\maketitle

\begin{abstract}
We study the $\mathrm{SL}_2(\mathbb{R})$-action on the moduli space of (triangulable) dilation tori with one boundary component. We prove that every orbit is either closed or dense, and that every orbit of the Teichmüller flow escapes to infinity.
\end{abstract}

\section{Introduction}

Dilation surfaces form an interesting class of geometric structures on surfaces. Part of their richness comes from the diversity of viewpoints from which they can be seen and the variety of a priori distinct topics they relate to. 

\vspace{2mm}

%% \begin{itemize}
%% \item 

\textbf{$(G,X)$-structures.} They are defined as a class of $(G,X)$-structures in the sense of Thurston where $G$ is the affine group and $X$ is the complex plane. It can be convenient to think of them as a subset of \textit{(branched) complex projective structure} corresponding to restricting the structural group (see \cite{Dumas} for an introduction to complex projective structures). From this point of view, it is natural to wonder about the geometric properties of individual dilation surfaces and to try and find algebraic and geometric invariants to tell them apart. \\

%% \item 

\textbf{Teichmüller theory.} A dilation surface can also be seen as a holomorphic differential on a Riemann surface taking values in a flat line bundle and as such they are directly connected to \textit{Teichmüller theory}. In particular, this connection is responsible for the well-definedness of \textit{moduli spaces} of such objects. In this line of thought, one can ask about the geometry of these moduli spaces, the properties of its natural complex, algebraic and foliated structures as well as potential compactifications. Evidence suggests that the geometry of these moduli spaces could resemble that of infinite volume hyperbolic manifolds. \\

%% item 

\textbf{Homogeneous dynamics}. As for translation surfaces, these moduli spaces come with a locally free $\mathrm{SL}_2(\mathbb{R})$-action which makes for a connection with homogeneous dynamics. A systematic study of this action should be compared with the recent body of work on the homogeneous dynamics of infinite volume hyperbolic $3$-manifolds (see \cite{McMullenMohammadiOh}, \cite{McMullenMohammadiOh2} and \cite{BenoistOh}) as well as to the case of the $\mathrm{SL}_2(\mathbb{R})$-action on translation surfaces (see \cite{EskinMirzakhaniMohammadi} and \cite{EskinMirzakhani}). \\

%% item 

\textbf{Renormalisation theory.} Part of the interest in these $\mathrm{SL}_2(\mathbb{R})$-actions comes from the fact that they provide a renormalisation scheme for certain flows on surfaces. Indeed, dilation surfaces are dynamical objects: they carry natural families of directional foliations which are geometric realisations of  \textit{affine interval exchange transformations} (see \cite{CamelierGutierrez}, \cite{BressaudHubertMaass} and \cite{MarmiMoussaYoccoz}).  Contrary to translation flows, dilation flows display enough of the variety of behaviour that can observed for smooth flows on surfaces to be a credible finite dimensional approximation of parameter spaces of such flows (see \cite{DFG} \cite{BFG} or \cite{BowmanSanderson}). Furthermore there is some evidence that these affine interval exchange transformations play a central role in the \textit{renormalisation theory} of non-linear flows on surfaces/generalised interval exchange maps (\cite{CunhaSmania}). Dilation surfaces thus provides a geometric realisation of both parameter spaces of flows on surfaces and their renormalisation operators.

%% \end{itemize}

The present article is an attempt at understanding a class of dilation surfaces which is a low-dimensional toy-model for the general case. The case under scrutiny is that of {\bf dilation tori with one boundary component}. We cobble together a study of their geometric properties, description of some geometric features of the moduli space, a complete analysis of the dynamics of their directional foliations and a study of the $\mathrm{SL}_2(\mathbb{R})$-action as well as that of the Teichmüller flow. \\

\noindent We denote by $\mathcal{D}$ the moduli space of dilation tori (see Section \ref{subsecdefmoduli} for a precise definition) with one geodesic boundary component. Our main Theorem is the following

\begin{theoreme}[Main theorem]

Let $T$ be an element of $\mathcal{D}$. 

\begin{itemize}

\item If the linear holonomy of $T$ generates a discrete subgroup of $(\mathbb{R}_+, \times)$, then $\mathrm{SL}_2(\mathbb{R}) \cdot T$ is closed.

\item If the linear holonomy of $T$ does not generates a discrete subgroup of $(\mathbb{R}_+, \times)$, then $\mathrm{SL}_2(\mathbb{R}) \cdot T$ is dense in $\mathcal{D}$.

\item The orbit of $T$ under the action of the Teichmüller flow diverges (eventually leaves every compact of $\mathcal{D}$).

\end{itemize}

\end{theoreme}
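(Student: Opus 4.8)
\emph{The plan is to} separate the statement into a \emph{structural} input about the pair $(\mathcal{D}, \mathrm{SL}_2(\mathbb{R}))$ and a \emph{classical} fact about the action of $\mathrm{SL}_2(\mathbb{Z})$ on $\mathbb{R}^2$, and to reduce the first two bullets to point-set topology. The starting observation is that the linear holonomy is an $\mathrm{SL}_2(\mathbb{R})$-\emph{invariant}: a real dilation $z\mapsto \lambda z$ is the scalar $\lambda\cdot\mathrm{Id}$ on $\mathbb{R}^2$, hence commutes with every $g\in\mathrm{SL}_2(\mathbb{R})$, so conjugating an affine holonomy $z\mapsto \lambda z + b$ by $g$ gives $z\mapsto \lambda z + gb$, leaving the linear part untouched. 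Thus the map $\mathrm{hol}\colon \widetilde{\mathcal{D}}\to(\mathbb{R}_{>0})^2$ recording the dilation factors $(\rho(a),\rho(b))$ of a basis of $\pi_1$ is constant on $\mathrm{SL}_2(\mathbb{R})$-orbits, and it is equivariant for the mapping class group acting through its image $\cong \mathrm{SL}_2(\mathbb{Z})$ on $H_1$, hence on $(\mathbb{R}_{>0})^2\cong\mathbb{R}^2$ via $\log$. The point linking hypothesis to conclusion is that $\langle\rho(a),\rho(b)\rangle\subset(\mathbb{R}_{>0},\times)$ is discrete if and only if $\log\rho(a)/\log\rho(b)\in\mathbb{Q}\cup\{\infty\}$, which is precisely the condition under which the $\mathrm{SL}_2(\mathbb{Z})$-orbit $O$ of $(\log\rho(a),\log\rho(b))$ is discrete in $\mathbb{R}^2$; for irrational ratio that orbit is dense.

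The key geometric input, which I would isolate as a proposition, is that $\mathrm{SL}_2(\mathbb{R})$ \emph{acts transitively on each fibre of} $\mathrm{hol}$ (two tori with the same linear holonomy lie in one orbit), and that $\mathrm{hol}$ is a continuous open surjection. Granting this, the orbit closures become formal. Writing $\pi\colon\widetilde{\mathcal{D}}\to\mathcal{D}=\widetilde{\mathcal{D}}/\mathrm{MCG}$ for the quotient, since a fibre is a single orbit and $\mathrm{MCG}$ carries the fibre over $h$ to the fibre over $h'$ for $h'\in O$, one has $\pi^{-1}(S)=\mathrm{hol}^{-1}(O)$ for the orbit $S=\mathrm{SL}_2(\mathbb{R})\cdot[T]$. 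Because $\pi$ is an open continuous surjection, $\pi^{-1}(\overline S)=\overline{\pi^{-1}(S)}$, and because $\mathrm{hol}$ is open one gets $\overline{\mathrm{hol}^{-1}(O)}=\mathrm{hol}^{-1}(\overline O)$. In the discrete case $\overline O=O$, so $\overline S=\pi(\mathrm{hol}^{-1}(O))=\pi(\pi^{-1}(S))=S$ and the orbit is closed; in the dense case $\overline O=(\mathbb{R}_{>0})^2$, so $\overline S=\pi(\mathrm{hol}^{-1}((\mathbb{R}_{>0})^2))=\pi(\widetilde{\mathcal{D}})=\mathcal{D}$ and the orbit is dense. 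This yields the first two bullets at once.

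For the third bullet I would show that the stabiliser $\Gamma_T\subset\mathrm{SL}_2(\mathbb{R})$ (the Veech group of $T$) is always \emph{elementary} — never a lattice. This is where the dilation-specific behaviour enters: using the analysis of the directional foliations, in which a dilation factor $\neq 1$ forces attracting/repelling closed leaves in generic directions, one shows $T$ admits no one-parameter family of affine symmetries and no lattice of them, so the orbit $\cong\mathrm{SL}_2(\mathbb{R})/\Gamma_T$ has infinite volume with a cusp. On such a quotient the Teichmüller flow $g_t=\mathrm{diag}(e^t,e^{-t})$ is divergent from every point. Concretely, I would track the systole (shortest saddle connection) and show that the dilation factor forces it to decay to $0$ as $t\to\pm\infty$; by a Mahler-type compactness criterion for $\mathcal{D}$ (sets on which the systole is bounded below are compact), the trajectory then leaves every compact set.

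\emph{The hard part} is the transitivity proposition of the second paragraph, i.e. that the linear holonomy is a \emph{complete} $\mathrm{SL}_2(\mathbb{R})$-invariant, so that the geometric moduli in a fixed-holonomy fibre are entirely absorbed by the action. I expect this to require the explicit parametrisation of $\mathcal{D}$ and of the action by a triangulation from Section~\ref{subsecdefmoduli}, together with a normal-form argument reducing an arbitrary torus of given holonomy to a standard model; the elementariness of $\Gamma_T$ needed for the third bullet is a closely related point that I would extract from the same directional-foliation analysis.
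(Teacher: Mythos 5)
Your reduction of the first two bullets has a genuine gap hiding in what you call ``the hard part,'' and it is not the kind of gap a normal-form argument closes routinely. Your scheme needs $\mathrm{hol}$ to be a \emph{complete} invariant of the $\mathrm{SL}_2(\mathbb{R})$-action on Teichm\"uller space (single-orbit fibres), plus openness and surjectivity, so that $\pi^{-1}(S)=\mathrm{hol}^{-1}(O)$ with $O$ the \emph{full} $\mathrm{SL}_2(\mathbb{Z})$-orbit. But changing the pentagon basis of a given one-holed dilation torus can only be done by cut-and-paste moves, and each intermediate move must again produce a valid (embeddable) pentagon, i.e.\ log-dilation parameters staying in the allowed region $\mathcal{Q}$ (resp.\ $\mathbb{R}_+^2$). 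Whether an arbitrary $\gamma\in\mathrm{SL}_2(\mathbb{Z})$ with $\gamma\mu\in\mathcal{Q}$ can be realized by such an \emph{admissible path} is exactly the issue; the paper deliberately does not claim it, works instead with a parametrization $\mathcal{P}$ (Proposition \ref{propparametrisation}) that is only onto and equivariant, and even raises the clean global description you need as an open question in the remark following that proposition. Consequently the paper cannot, and does not, invoke the classical density of $\mathrm{SL}_2(\mathbb{Z})$-orbits as you do: it proves a \emph{constrained} density statement (Proposition \ref{propsl2dense}) --- density of the set of points reachable from $\mu_0$ by words in the twist matrices \eqref{eqformulemappingclass} all of whose prefixes keep the parameters in $\mathbb{R}_+^2$ --- via a three-step argument: Gauss' algorithm to accumulate at $0$, repeated application of $\tau_1$ to accumulate on an axis, then $\mathrm{SL}_2(\mathbb{N})$, whose elements automatically preserve positivity. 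Your appeal to the unconstrained classical fact does not substitute for this.

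The third bullet is where your approach actually fails. Elementarity of the Veech group $\Gamma_T$ only yields divergence of $g_t$ in the \emph{intrinsic} topology of the orbit $\mathrm{SL}_2(\mathbb{R})/\Gamma_T$; the theorem asserts divergence in $\mathcal{D}$, and for the (generic, by the second bullet) dense orbits these are unrelated --- a dense orbit is very far from being an embedded copy of $\mathrm{SL}_2(\mathbb{R})/\Gamma_T$. Translation surfaces show the implication ``non-lattice stabiliser $\Rightarrow$ divergence'' is false: stabilisers are generically trivial, yet by Masur's criterion generic Teichm\"uller orbits are recurrent. Moreover, your Mahler-type criterion via the systole is problematic here, since lengths are not well defined on a dilation surface (the holonomy contains dilations), so only scale-invariant quantities can detect divergence. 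The paper's proof is built on exactly such quantities: the two divergence criteria are that the boundary angle $\Theta(T_n)$ tends to $0$ or $\pi$ (Proposition \ref{divergence1}), or that $T_n$ contains cylinders of angle bounded below with multipliers tending to infinity (Proposition \ref{divergence2}); divergence is then established by a three-case analysis of the horizontal direction (cylinder, Cantor, and door directions), using the Rauzy induction of Section \ref{directionalfoliations}, the hole-size estimates of Lemma \ref{estimates}, and the bounded-distortion Lemma \ref{distortion}. None of this mechanism --- which is the real content of the divergence statement --- appears in your sketch.
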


\noindent We also prove that all trajectories of the Teichmüller flow escape to infinity. Together with the above theorem, this supports the analogy between moduli spaces of dilation surfaces and infinite volume hyperbolic manifolds. We discuss this further in Section \ref{comments}

\vspace{3mm}

\paragraph*{\bf Structure of the article} The article in organised as follows: in Section \ref{secdilsurface}, we introduce basic material about dilation surfaces. In Section \ref{secroom}, we specify some of the material of Section \ref{secdilsurface} to the case of one-holed dilation tori and introduce a set of coordinates on the moduli space upon which we build to study the action of $\mathrm{SL}_2(\mathbb{R})$ on $\mathcal{D}$. In Section \ref{directionalfoliations}, we reproduce some material from \cite{BFG} and apply it to give a systematic description of the dynamics of directional foliations on one-holed dilation tori. We turn to describing orbits of the Teichmüller flow in Section \ref{secTeich}. Finally, we conclude the article with comments, open questions and conjectures in Section \ref{comments}. \\

\section{dilation surfaces and their moduli spaces, generalities.}
\label{secdilsurface}

\subsection{Generalities on dilation surfaces}  The main objects we will deal with in this article are dilation structures, defined as follows.

\begin{definition}
	\label{def affine surface}
  			A \textbf{dilation structure} on a topological surface $\Sigma$ - possibly with boundary -  is given by a finite set $S \subset \Sigma$, the \textbf{singularities} of  $\Sigma$, and an atlas of charts
  $\mathcal{A} = (U_i, \varphi_i)_{i \in I}$ on $\Sigma \setminus S$
such that
	\begin{itemize}
  		\item the transition maps are locally restriction of elements of
    	$\mathrm{Aff}_{\RR^*_+}(\CC) = \{ z \mapsto az+b \ | \ a \in \RR^
    *_+ \ , \ b \in \CC \} $;
       \item Seen in any chart, any connected component of the boundary - if any - must be a straight line;
  		\item each singularity in the interior of $\Sigma$ has a punctured neighborhood which is
    affinely equivalent to a punctured neighborhood of the cone point
    of a Euclidean cone of angle multiple of $2\pi$;
    	\item each singularity on the boundary of $\Sigma$ has a punctured neighborhood which is
    affinely equivalent to a punctured neighborhood of the cone point
    of an Euclidean cone.
  
	\end{itemize}
\end{definition}

Note that the notion of straight line in this setting is well-defined since changes of coordinates are affine maps, as opposed to the notion of geodesic which requires an invariant metric. Moreover, any direction $\theta \in \mathbb{S}^1$ the foliation by straight lines of $\CC$ in the direction defined by $\theta$ being invariant by dilation maps, it gives rise to a well-defined oriented foliation $\mathcal{F}_{\theta}$ on any dilation surface. Such a foliation is called a \textbf{directional foliation}. We call the \textbf{directional foliations} the resulting family of foliations, therefore indexed by the circle, that we denote $(\mathcal{F}_{\theta})_{\theta \in \mathbb{S}^1}$. \\

Finally, note  that there is a canonical complex structure induced by a dilation structure, since dilations are in particular holomorphic maps.  \\

In order to define the moduli space of dilation structure, we need to consider them up to trivial transformations, which consists to push-forward a dilation structure via a diffeomorphism, artificially giving rise to two different atlases on a given topological surface.

\begin{definition}
	\label{def affine auto}
Given two dilation structures $\mathfrak{D}_1$, $\mathfrak{D}_2$ on a topological surface $\Sigma$, we will say that a diffeormorphism $g \in \Diff^+(\Sigma)$ which preserves the boundary of $\Sigma$ is an \textbf{dilation automorphism} with respect to the pair ($\mathfrak{D}_1$, $\mathfrak{D}_2$) if, written in any affine coordinates of $\mathfrak{D}_1$ and $\mathfrak{D}_2$, $g$ writes like a dilation map.
\end{definition}

\subsection{Triangulation and polygonal models}
\label{subsec Triangulation and polygonal models}
On the contrary of translation surfaces, not all dilation surfaces carry a geometric polygonation. The obstruction of carrying such a triangulation can be characterize with the notion of affine cylinder. The following definition will also be of crucial importance in order to state the two divergence criteria given in Section \ref{secTeich}.

\begin{definition} 
	\textbf{A dilation cylinder of angle $\theta$ and multiplier $\lambda$}, denoted by $C_{\theta}^{\lambda}$, is the quotient of an angular sector of angle $\theta$ in $\CC^*$  by the action of a dilation $ z \mapsto \lambda z$ with $\lambda \in \RR^*_+ \setminus \{1\}$. 
\end{definition} 

Note that the construction only makes sense for $\theta \leq 2\pi$. However, replacing $\CC^*$  with an infinitely branched cover we can extend the construction to any positive $\theta$. \\

\begin{figure}[!h]
	\begin{center}
		\def\svgwidth{0.6 \columnwidth}
			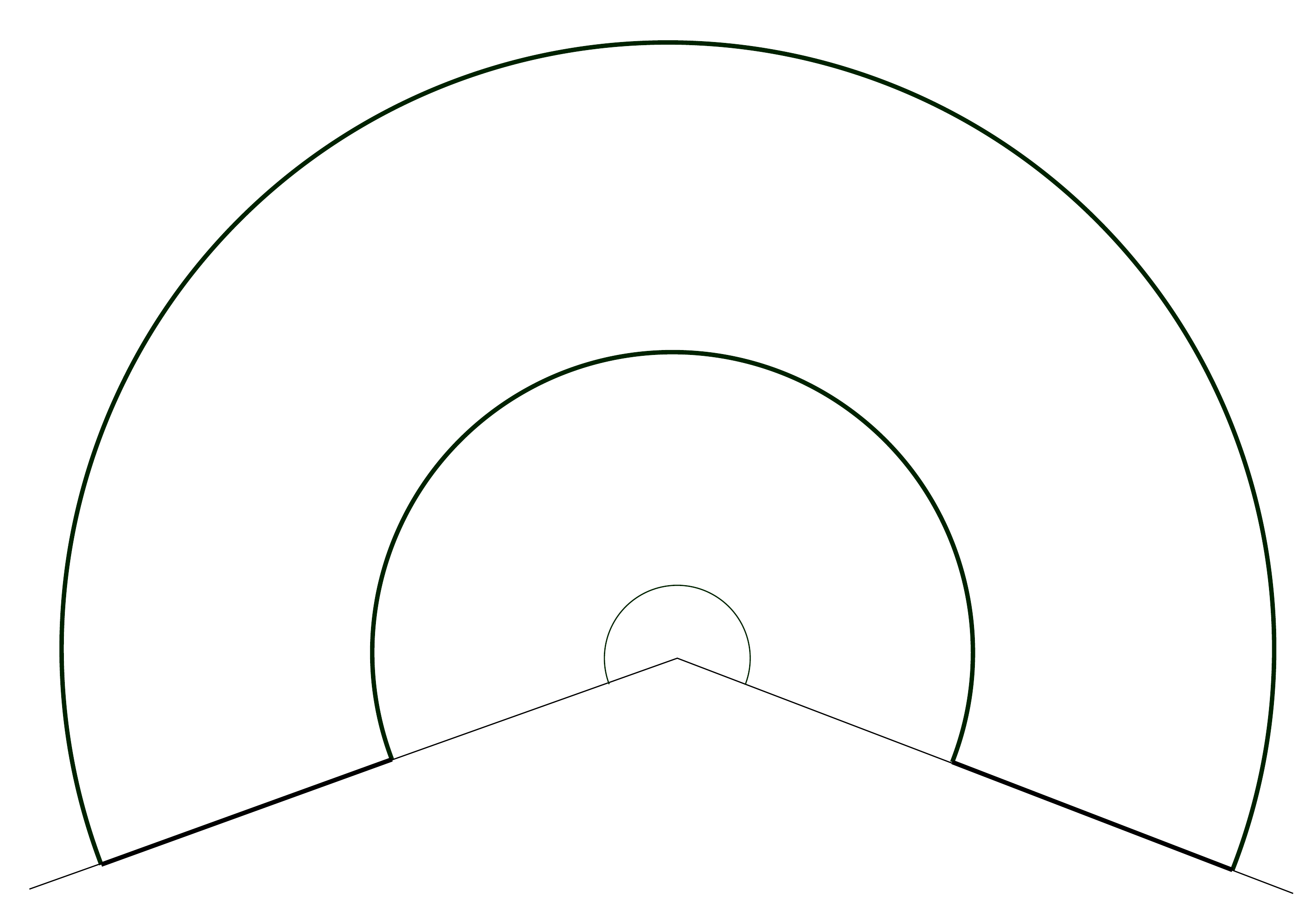
	\end{center}
	\caption{In black, a fundamental domain for the action of $z \mapsto 2 z$ on a cone of angle $\theta > \pi$. Any leaf entering the cylinder is trapped within it forever regardless of the direction of the leaf, see the red one drawn on the figure for instance. This property prevents a polygonation to 'connect' both sides of the cylinder.}
			\label{figgroscylindre}
\end{figure}

 A \textbf{straight line polygonation} of a dilation structure is a topological polygonation of $\Sigma$ with set of vertices equal to the set the singularities of the dilation structure and whose edges are straight lines connecting singularities (also called \textbf{saddle connections}).

\begin{definition}
 We say that a dilation structure is \textbf{polygonable} if it admits a straight line polygonation. If the topological surface under consideration has boundary components, we impose moreover that these components lies in the set of straight line edges of the polygonation.
\end{definition}

Note that the above definition implies that all boundary components must contain at least one singularity since we required that the vertices of the polygonation are singularities. This condition may seen artificial at first, but it prevents (for instance) one to glue arbitrary long cylinders on the boundaries which would make the moduli space artificially big. \\

Note also that a polygonation of a surface may be refined to get a straight line triangulation. We will at times say that a surface is \textbf{triangulable} if it has a straight line polygonation whose faces are all triangles. A dilation surface is polygonable if and only if it is triangulable. \\

As already mentioned, not all dilation structure carries such a polygonalisation. The following theorem gives a simple characterisation of dilation surfaces which are polygonable.

\begin{theoreme}{\cite{veech2,DFG}}
  A dilation surface admits a straight line triangulation if and only if it
  does not contain an embedded open affine cylinder of angle
  $\pi$.
\end{theoreme}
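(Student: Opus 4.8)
The plan is to prove the two implications separately: the forward direction (the presence of an angle-$\pi$ cylinder forbids a triangulation) is elementary and rests on the trapping phenomenon of Figure~\ref{figgroscylindre}, while the converse carries all the content. For the forward direction, I would first record the geometric fact behind that figure. Developing a neighbourhood of $C_\pi^\lambda$ identifies it with the closed half-plane $\{\mathrm{Im}(z) \ge 0\}$ modulo $z \mapsto \lambda z$, the two boundary circles lifting to the real axis. Since the half-plane is convex, any straight segment contained in it meets the line $\{\mathrm{Im}(z) = 0\}$ in at most one point; hence a compact straight segment with both endpoints off the open interior cannot enter that interior at all, and in particular no saddle connection can pass through $C_\pi^\lambda$. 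Now suppose $\Sigma$ admitted a straight line triangulation $T$ while containing an embedded open cylinder $A \cong C_\pi^\lambda$. Its interior carries no singularity, so it meets no vertex of $T$, and by the preceding remark it meets no edge either; thus the connected open set $A$ lies inside a single open face of $T$, a topological disc. But the linear holonomy along the core curve of $A$ is $z \mapsto \lambda z$ with $\lambda \ne 1$, hence nontrivial, so the core is not null-homotopic and cannot lie in a disc --- a contradiction. As a cylinder of angle $\theta > \pi$ contains an embedded sub-cylinder of angle $\pi$, this simultaneously rules out triangulability whenever an embedded cylinder of angle at least $\pi$ is present.

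For the converse, assume $\Sigma$ contains no embedded affine cylinder of angle $\ge \pi$, and build the triangulation greedily. Because $\Sigma$ has finite topological type, there is an a priori bound, in terms of its Euler characteristic, on the number of saddle connections with pairwise disjoint interiors, so one may fix a \emph{maximal} such family $\Gamma$ (which also contains the boundary components, themselves straight edges). Cutting $\Sigma$ along $\Gamma$ yields finitely many dilation surfaces with boundary, the complementary regions, each bounded by a cyclic concatenation of elements of $\Gamma$. If every region is a triangle we are done, so the whole matter reduces to the following local existence statement: \emph{any complementary region that is not a triangle contains in its interior a further saddle connection} --- either a diagonal joining two boundary vertices, or a connection to an interior singularity --- which contradicts the maximality of $\Gamma$ and thereby forces every region to be a triangle.

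To prove this local statement I would argue by contradiction inside a region $R$ that is not a triangle. Fix a boundary vertex $p$ of $R$ and, for each admissible direction, emit the straight ray from $p$ into $R$; were some such ray to reach a singularity before leaving $R$, we would obtain the desired saddle connection. Assuming no direction works, every outgoing ray avoids the finitely many singularities forever, and a Poincaré--Bendixson-type analysis of the directional foliations on the compact surface $R$, of the kind developed in Section~\ref{directionalfoliations}, shows that these rays accumulate onto a recurrent set and produce an embedded affine cylinder $A \subset R$ whose core is a closed leaf. The main obstacle, and the real content of the theorem, is the \emph{angle estimate}: since the rays issued from $p$ span a half-circle of directions and are all trapped by $A$, the cylinder $A$ must subtend an angle of at least $\pi$ --- this is exactly the half-plane picture of Figure~\ref{figgroscylindre} read in reverse. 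This contradicts the standing hypothesis and completes the argument. Converting the purely qualitative failure to find a diagonal into the quantitative bound ``angle $\ge \pi$'' is the step I expect to require the most care.
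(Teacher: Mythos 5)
You should know at the outset that the paper does not prove this statement at all: it is quoted from Veech's unpublished notes \cite{veech2} and from \cite{DFG}, so there is no internal proof to compare yours against, and what follows assesses your argument on its own terms. Your forward direction is essentially correct: developing a component of a straight segment lying in the cylinder into the half-plane, convexity shows that a compact straight segment with both endpoints outside the open cylinder cannot enter it, so no edge of a straight-line triangulation meets the cylinder, no vertex lies in it, hence the cylinder would sit inside a single triangular face; since its core has linear holonomy $\lambda \neq 1$ it is not null-homotopic, a contradiction. One point you should still justify is the claimed Euler-characteristic bound on families of saddle connections with disjoint interiors: this needs the fact that no saddle connection bounds a straight monogon and no two disjoint ones cobound a bigon, which holds because a disc in a dilation surface has trivial holonomy, hence carries a genuine flat metric, and Gauss--Bonnet rules such discs out.

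The converse, which you rightly identify as carrying all the content, has a genuine gap at its central step. In the cut region $R$, a ray emitted from the chosen vertex $p$ has three possible fates, not two: it may hit a singularity, it may stay in the interior of $R$ forever, or it may terminate on the interior of a boundary edge of $R$. Your sentence ``assuming no direction works, every outgoing ray avoids the finitely many singularities forever'' silently discards the third possibility, and that possibility is the generic one. Concretely, take for $R$ a non-convex Euclidean polygon with a vertex $p$ at the bottom of a thin slit whose mouth opens onto a single long far edge: every ray from $p$ exits through the interior of an edge, no ray hits a vertex, no ray is infinite, and $R$, being a disc with trivial holonomy, contains no affine cylinder whatsoever. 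So the implication your argument actually uses --- ``no saddle connection issued from $p$ implies an embedded cylinder of angle at least $\pi$'' --- is false; the correct local statement must invoke the absence of saddle connections issued from \emph{every} singularity of $R$ (which is what maximality gives you), and its proof must handle trajectories stopped by the boundary. Two further steps are also unsupported. First, the Poincar\'e--Bendixson-type claim that rays avoiding singularities forever must accumulate on a closed leaf bounding a cylinder fails for dilation surfaces in general: Section \ref{directionalfoliations} of this very paper (see Theorem \ref{description}) exhibits directions whose leaves accumulate on a transversally Cantor minimal set, with no closed leaf and no cylinder, so this step cannot follow from soft recurrence and needs a real argument exploiting the absence of saddle connections. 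Second, the assertion that ``the rays issued from $p$ span a half-circle of directions'' has no justification: the set of admissible directions at $p$ is the interior angle of $R$ at that corner, which is just some positive number and in no way bounded below by $\pi$. These are exactly the difficulties that Veech's notes and \cite{DFG} resolve by a substantially more delicate analysis, so your outline should be regarded as a correct proof of the easy implication together with a plan, but not a proof, of the hard one.
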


This theorem was proven by Veech in a set of unpublished notes \cite{veech2}. \\

As already mentioned, we shall restrict ourselves to polygonable dilation structures. The advantage of being polygonable is that one can recover the dilation structure by gluing together planar polygons. This data is referred to as \textbf{a polygonal model} of the dilation structure. Such models will be of crucial use in order to study the geometry of the moduli space. 

\subsection{Moduli spaces and its natural associated dynamics.} \label{subsecdefmoduli} We have now all the material required to introduce moduli spaces. The following discussion is a gathering of well known facts, see \cite{FM}. We keep it  short since the next section will provide us with an explicit description of the moduli space coming from the polygonal models. \\

Since we consider surfaces with boundary, in order to define a sensible moduli space, we shall fix the combinatoric of the singularity with respect to the boundary components. For what follows when considering a topological surface $\Sigma$ we suppose fixed the number of singularity on each of the boundary components. We refer to this data as an onto mapping $\varphi$ from the singularities to the boundary components. For example, a genus 1 surface with 2 boundary components, 2 singularities on one component and 2 on the other one would be considered different from the same topological surface but with 3 singularities on the first boundary component and 1 on the other	. \\

We denote by $\mathcal{TD}(g,n,\partial, \varphi)$ the set of all polygonable dilation structures of a topological surface of genus $g$ with $n$ singularities, $\partial$ boundary components and fixed combinatoric $\varphi$ up to dilation automorphisms isotopic to the identity. Given a topological surface $\Sigma$ it is well known that the modular group $\mathrm{Mod}(\Sigma)$, \textit{i.e.} the group of all diffeomorphisms fixing the marked points and the boundary up to those isotopic to the identity, acts properly and discontinuously on the Teichmüller space (see for example \cite[Section 12.3]{FM}). Since a dilation structure gives rise to a holomorphic structure, the quotient in the following definition is an actual orbifold. 

\begin{definition}
	 We call the \textbf{dilation moduli space} of a surface $\Sigma$ of genus $g$ with $n$ singularities, $\partial$ boundary components and a combinatoric $\varphi$ the following quotient space:
		$$ \mathcal{D}(g,n,\partial; \varphi) := \quotient{\mathcal{TD}(g,n,\partial, \varphi)}{\mathrm{Mod}(\Sigma)} \ .$$
\end{definition}

This article aims to investigate the dynamical systems that dilation moduli spaces naturally carry through one of the first non-trivial example. We shall denote by $\D := \D(1,1,1, *)$ the moduli space of a \textbf{one holed torus with one singularity}. Note that, by our definition of polygonable the singularity must lie in the unique boundary component and that the combinatoric $*$ is trivial. Note also that it forces the euclidean angle around the unique singularity to be $\pi$ by an Euler characteristic argument. \\

In Section \ref{secroom} we give a simple description of $\D$ using polygonal models.

\subsection{The $\mathrm{SL}_2(\mathbb{R})$-action and the Teichmüller flow.}

There is a natural $\mathrm{SL}_2(\mathbb{R})$-action defined on $\mathcal{D}(g,n,\partial)$. Formally the action can be defined as follows. Given a matrix $A \in \mathrm{SL}_2(\mathbb{R})$, we define the image of the dilation structure $(U, \varphi_U)_{U \in \mathcal{U}}$ as the structure given by the atlas $(U, A \cdot \varphi_{U})_{U \in \mathcal{U}}$, which is still a dilation structure. The normal subgroup $\mathbb{R}^*_+ \cdot \mathrm{Id} < \mathrm{GL}^+_2(\mathbb{R})$ lies in the kernel of this action and therefore we have a well-defined action of $\mathrm{GL}^+_2(\RR) / \mathbb{R}^*_+ \sim \mathrm{SL}_2(\RR)$. \\

One can also describe this action more visually using polygonal model that we introduced in the previous subsection: if a dilation structure is given by gluing parallel sides of a set $P$ of polygons together and if $A$ is an element of $\mathrm{SL}_2(\RR)$, the image of the corresponding dilation structure under $A$ is simply given by the unique corresponding gluing of the sets of polygons $A \cdot P$, see Figure \ref{figdefsl}. In other words, the $\mathrm{SL}_2(\RR)$-action on a moduli space $\D(g,n,\partial)$ is the trace of the natural $\mathrm{SL}_2(\RR)$-action on the space of polygons. \\

\begin{figure}[!h]
	\begin{center}
		\def\svgwidth{0.6 \columnwidth}
			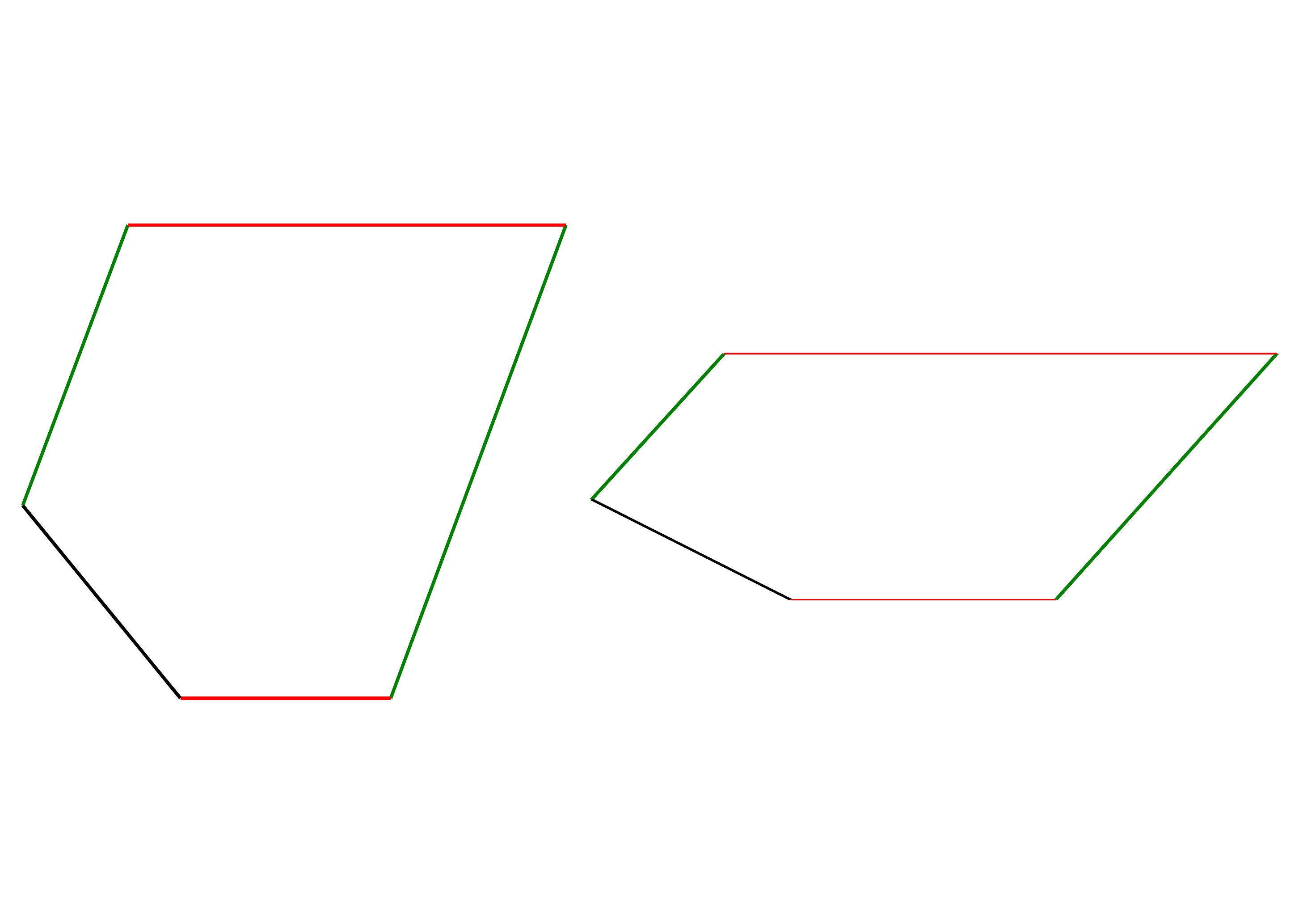
			\label{figdefsl}
	\end{center}
	\caption{The pentagon on the right is the image under a  diagonal matrix $A$ of $\SL_2(\RR)$ of the pentagonal on the left, up to a some dilation. The corresponding dilation structures obtained from these pentagonal models are also image from one another. by elements of $\SL_2(\RR)$. }
	\label{}
\end{figure}

One can also restrict the previous action to the diagonal group, parametrised as
	$$ \left\{ \begin{pmatrix}
		e^t & 0 \\ 0 & e^{-t} 
	\end{pmatrix} \right\}_{t \in \RR} \ ,  $$
in order to get an action of $\mathbb{R}$ on the moduli space, \textit{i.e.} a flow. This flow is called the \textbf{Teichmüller flow}. This flow is an extension to dilation surfaces of the standard on acting on the moduli space of translation surfaces. It was extensively studied since Masur's seminal work relating the recurrence of the orbit of a given translation surface to the unique ergodicity of the horizontal foliation of the same surface \cite{Masur}.

\section{The rooms}
\label{secroom}

\subsection{The pentagonal model}  We denote by $\Sigma_{1,1}$ a topological torus with one boundary component and a marked point lying on the boundary component. In the case of a dilation structure in $\D(1,1,1)$, meaning a dilation structure on the topological surface $\Sigma_{1,1}$, one can always have a one face polygonal model which embeds in the plane as in Figure \ref{figroom}.

\begin{lemma}
	\label{lempentagonal}
	A dilation structure of $\D(1,1,1)$ can always be obtained in gluing two pairs of parallel sides of a pentagon in the plane. In particular, every dilation structure may be represented by a convex polygon in the plane.
\end{lemma}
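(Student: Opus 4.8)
The plan is to start from a straight-line triangulation and unfold it into the plane. Since every element of $\D$ is by assumption polygonable, the theorem of Veech quoted above guarantees that $T$ admits a straight-line triangulation $\mathcal{T}$; because $T$ has a single singularity $v$, all vertices of $\mathcal{T}$ are $v$ and every edge is a saddle connection from $v$ to itself. First I would run the Euler characteristic bookkeeping: with $V=1$ vertex, $F$ triangular faces and $E$ edges one has $V-E+F=\chi(\Sigma_{1,1})=-1$; writing $E=E_{\mathrm{int}}+E_\partial$ and using $3F=2E_{\mathrm{int}}+E_\partial$ together with the fact that the single boundary component carries the single vertex and hence consists of exactly one edge ($E_\partial=1$), one solves to get $F=3$, $E=5$ and $E_{\mathrm{int}}=4$. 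So $\mathcal{T}$ has exactly three triangles.

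Next I would develop these three triangles successively in the plane: pick one triangle, develop it, glue the second along a shared interior edge, and glue the third along a further shared edge. Since the dual graph of three triangles has a spanning tree with two edges, two interior edges are used to carry out the unfolding and remain interior diagonals of the developed figure, while the development produces a polygon with $3+1+1=5$ sides, i.e. a pentagon $P$. Reading off the side identifications: the unique boundary edge yields one side of $P$ that is left free, and the two interior edges not used in the unfolding each appear twice along $\partial P$, giving two pairs of sides that must be glued back to recover $T$. Because every transition map, hence every edge identification, is of the form $z\mapsto \lambda z+c$ with $\lambda\in\mathbb{R}^*_+$, the two sides of each glued pair are images of one another under a dilation and are therefore parallel. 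This already establishes the first sentence: $T$ is obtained by gluing two pairs of parallel sides of a pentagon.

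It remains to arrange that $P$ be convex, which I expect to be the main difficulty. The five corners of $P$ are all copies of $v$, and their angles sum to $3\pi$, the interior-angle sum of any pentagon; convexity amounts to each of these five corner angles being at most $\pi$. Since the angles sum to $3\pi$, in a nondegenerate pentagon at most two of them can be $\geq \pi$, so $P$ has at most two non-convex corners. My plan is to remove them by modifying the unfolding: a reflex corner of $P$ is located at a diagonal of $P$ (an interior edge of $\mathcal{T}$) across which the two adjacent triangles form a non-convex quadrilateral, and performing a diagonal flip there strictly decreases the number of reflex corners; iterating should yield a triangulation whose development is convex. Equivalently, one may try to choose the two cut edges $e_1,e_2$ (the glued pairs) to be a shortest pair of saddle connections generating $\pi_1(\Sigma_{1,1})$, for which the developed polygon with boundary word $e_1e_2e_1^{-1}e_2^{-1}\,b$ is convex.

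Making either of these convexification arguments fully rigorous --- in particular checking that the flipping procedure terminates and cannot stall with a reflex corner forced by the combinatorics, or that a shortest generating pair actually develops to a convex polygon --- is the step that requires genuine care; the production of the pentagon and the identification of its parallel side-pairings are then pure bookkeeping.
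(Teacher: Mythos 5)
Your Euler-characteristic count and the production of the pentagon follow the same route as the paper: the paper derives $f=3$, $e=5$ from $\chi(\Sigma_{1,1})=-1$ and $3f=2e-1$, identifies the unique triangle carrying the boundary edge, and pins down the gluing pattern by collapsing that triangle and observing that the two remaining triangles must form the standard two-triangle triangulation of the closed torus (Figure \ref{figroom3}); re-inserting the collapsed triangle then forces the gluing whose development is the pentagon of Figure \ref{figroom}. Your dual-spanning-tree unfolding is an equivalent piece of bookkeeping, with one caveat you pass over: the development of three triangles along a spanning tree is a priori only an immersed disk. Two triangles attached to two different sides of a middle triangle can overlap in the plane beyond their common vertex (the total cone angle at $v$ is $3\pi$, so the angles meeting at a developed corner can exceed $2\pi$), so embeddedness of the pentagon needs either the forced combinatorics above or a separate argument.

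The genuine gap is the convexity claim, and the specific repair you propose would fail. Suppose $w$ is a reflex corner of the developed pentagon at which exactly two triangles meet along a diagonal ending at $w$; their union is a quadrilateral which is non-convex at $w$. But in a non-convex quadrilateral the only diagonal contained in it is the one emanating from the reflex vertex --- which is precisely the diagonal already present. So the flip you want to perform is exactly the move that is unavailable: the procedure stalls at every reflex corner rather than removing it (and if three triangles meet at $w$, there need not be any non-convex quadrilateral across a single diagonal at all). Your fallback, a ``shortest pair of generating saddle connections'', is also not obviously well posed: because the linear holonomy is non-trivial, lengths of saddle connections are only defined after fixing a chart at the singularity and they may accumulate at zero, and no argument is given that a minimizing pair develops to a convex pentagon. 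The move that actually removes reflex corners is not a flip of a fixed triangulation but the cut-and-paste operation of Figure \ref{figmappingclass} and formulas \eqref{eqformulemappingclass}: one cuts a triangle off the pentagon along a diagonal and reglues it through the side identification, thereby changing the pair of saddle connections along which the surface is cut open. This is also how the paper itself deals with the issue --- its proof of the lemma essentially stops at the combinatorial pentagon, and convexity of the model is arranged later ``up to an extra sequence of gluing and pasting triangles'' (beginning of Section \ref{directionalfoliations}) --- so you correctly identified the delicate step, but the mechanism you chose for it is the wrong one.
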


\begin{figure}[!h]
	\begin{center}
		\def\svgwidth{0.6 \columnwidth}
			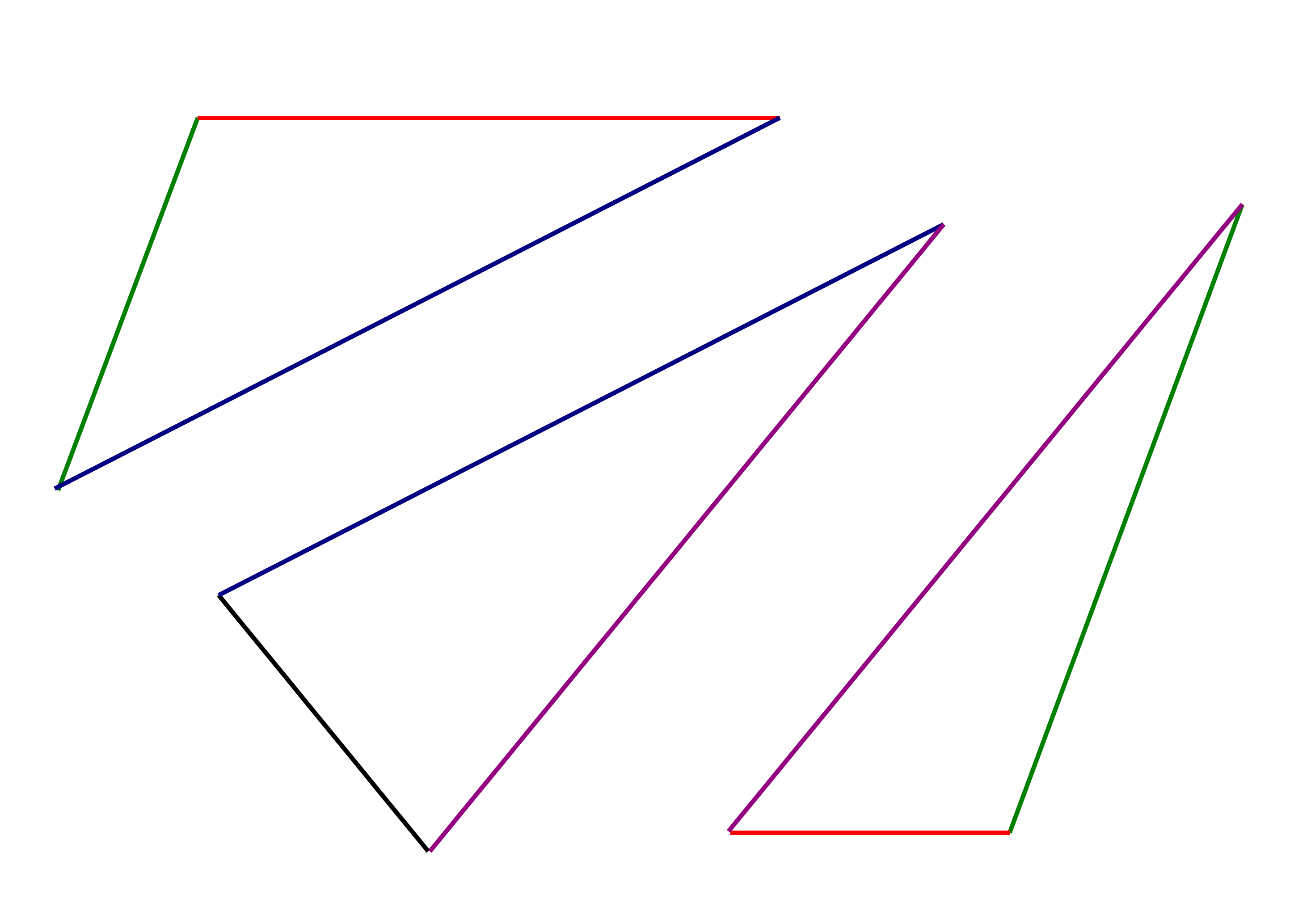
	\end{center}
	\caption{The three triangles are glued according to the colors of their side. The centred one has a black side corresponding to the boundary of $\Sigma_{1,1}$. The top and the right triangles have a gluing corresponding to the one of a torus without boundary if one identifies - topologically - the blue side and the violet one. This corresponds to collapsing the centre triangle.} 
\label{figroom3}
\end{figure}

\textbf{Proof.} Let $\mathfrak{D} \in \D$ a dilation structure, which is polygonable by definition $\mathfrak{D}$. Up to refining a given polygonation, one can suppose that it is a triangulation. We denote by $f$ the number of faces and and by $e$ the number of edges of such a triangulation. Using the Euler characteristic of the surface one can check that we must have on one hand
	$$ - 1 = f - e + 1 $$   
and, since every edges belongs to exactly 2 faces except the boundary, one the other hand 
	$$ 3 f = 2 e - 1$$   
which gives that $e = 5$ and then $f =3$. One it then left to triangles that, glued together, give rise to our dilation structure $\mathfrak{D}$. Since we required that the boundary component must be an edge of the triangulation, there is one and only one in our case, triangle having one of sides corresponding to the straight line boundary of our torus. Collapsing topologically the boundary side and the face of this triangle should give the gluing of two triangles which must give a triangulation of the torus without boundary. Therefore, inserting the collapsed triangle back between the two triangle left must lead to the gluing represented in Figure \ref{figroom3}. This gluing actually gives rise to a pentagonal as in Figure \ref{figroom}, concluding.  \hfill $\blacksquare$ \\

The following definition is the key of what will define our parametrisation of $\D$.
\begin{definition}
	We call the \textbf{space of rooms}, that we denote by $\mathcal{R}$, the space of all pentagonals appearing in Lemma \ref{lempentagonal} up to dilation by a positive number, see figure \ref{figroom}.
\end{definition}

\begin{figure}[!h]
	\begin{center}
		\def\svgwidth{0.6 \columnwidth}
			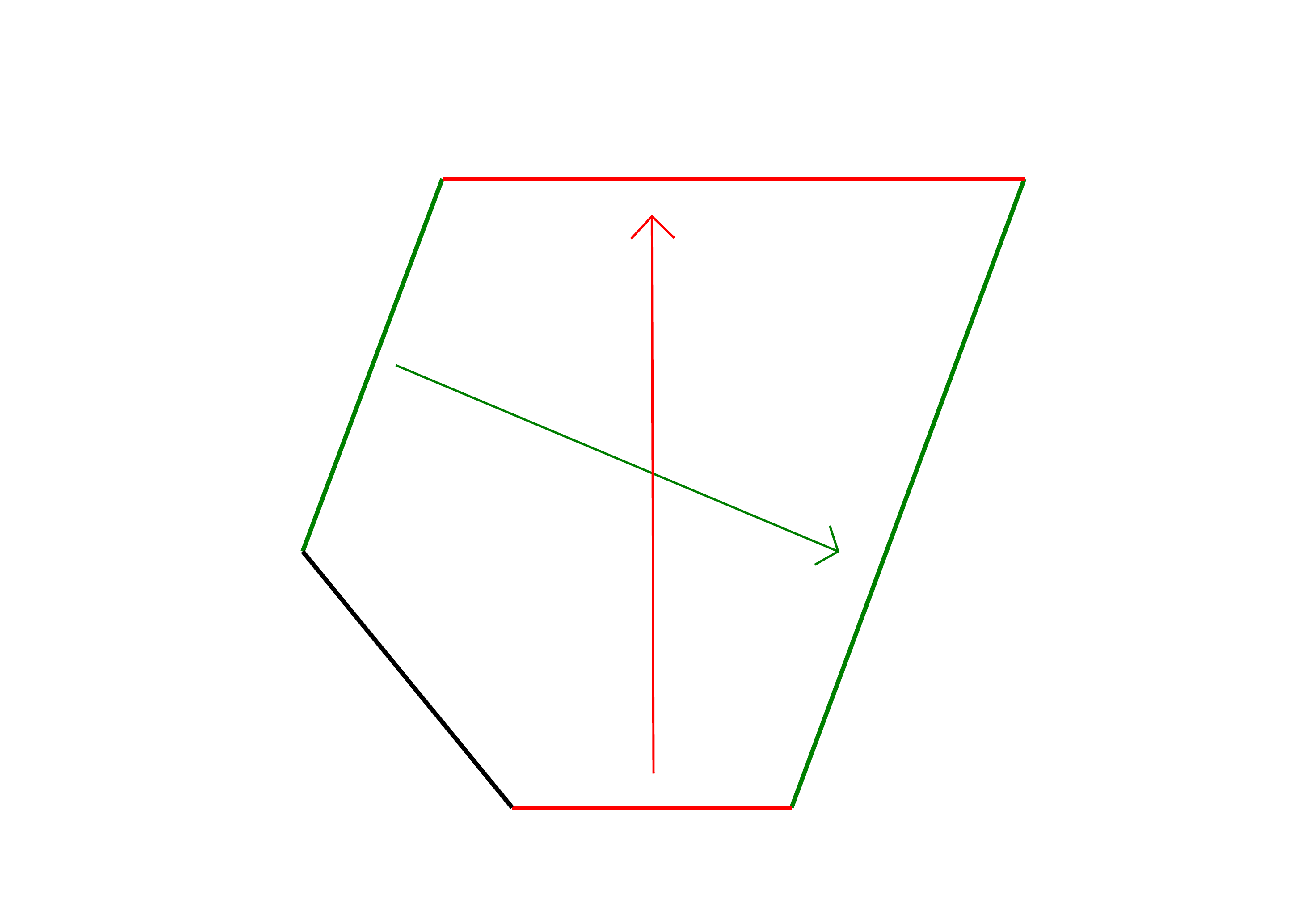
	\end{center}
	\caption{This polygonable model has a unique 2-cell, a pentagon, a unique vertex corresponding to the vertices of the pentagon and three edges. Topologically, the resulting surface is a torus with one boundary component, the black edge on the Figure. The green and red sides are glued using the two unique dilation maps (that we represented by the two arrows) sending a side to a parallel one.}
			\label{figroom}
\end{figure}

Lemma \ref{lempentagonal} implies that the map $\mathcal{R} \to \D$ which associates to a room its corresponding dilation structure is onto and continuous, in other words the space of rooms parametrises $\D$.  In the next subsection, we make this parametrisation more explicit by using a simple parametrisation of the space of rooms. 

\subsection{An explicit parametrisation of the space of rooms.}  Now that we have the pentagonal model introduced in the last subsection, we use it to get a nice parametrisation of our moduli space. The following picture shows how to associate to a room a basis, up to multiplicative real constant and a pair of 'dilation coefficients'.

\begin{figure}[!h]
	\begin{center}
		\def\svgwidth{0.6 \columnwidth}
			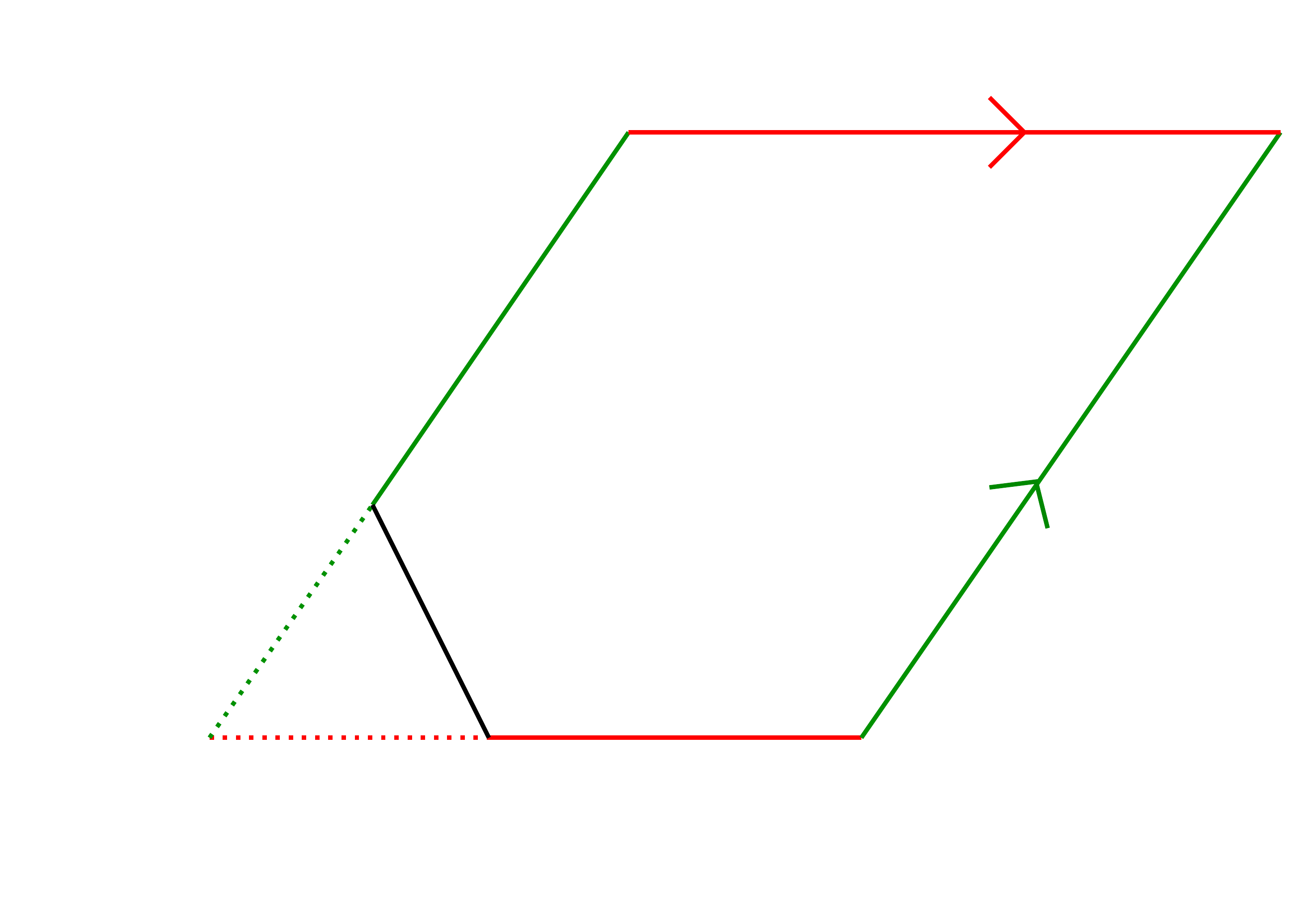
	\end{center}
	\caption{Given a oriented basis $(e_1, e_2)$ of $\mathbb{R}^2$ and two dilation parameters not both greater than $1$, one can build a room in considering the rectangle of sides $(e_1, e_2)$ here in red and green and remove a triangle so that the ratio between parallel sides are respectively $\nu_1^{-1}$ and $\nu_2^{-1}$.} 
\label{figroom4}
\end{figure}

The above Figure shows that one can recover a room, and therefore a dilation structure of $\D$, from an oriented basis taken up to dilations and a pair of dilation parameters. We denote by $\mathcal{B}$ the set of all oriented basis of $\mathbb{R}^2$ up to dilations. For the resulting pentagonal to be well described, one must require that not both $\nu_1 < 1$ and $\nu_2 < 1$ holds simultaneously. \\ 

We shall actually consider the pair $(\mu_1, \mu_2)= (\ln(\nu_1), \ln(\nu_2))$ instead of $(\nu_1, \nu_2)$ since it will appear that it is more convenient to work with. We call them the \textbf{log-dilation parameters}, and we denote by $\mathcal{Q}$ the subset of $\RR^2$ of all pairs $(\mu_1, \mu_2)$ such that $\mu_1 < 0$ and $\mu_2 < 0$ do not simultaneously hold. We also denote by $p$ the map which to a basis and pair of log-dilation parameters in $\mathcal{Q}$ gives back the naturally occurring room,
	$$ p := \mathcal{B} \times \mathcal{Q} \to \mathcal{R} \ ,$$ 
 described above and in figure \ref{figroom4}. \\

This map is onto and natural with respect to the $\SL_2(\RR)$-linear action on both $\mathcal{B}$ and the space of rooms; for any basis $\mathcal{B}$, and any log-dilation parameters $(\mu_1, \mu_2) \in \mathcal{Q}$ and any $A \in \SL_2(\RR)$ we have 
\begin{equation}
	\label{eqnaturelroom}
	 p (A \cdot \mathfrak{B}, (\mu_1, \mu_2) ) = A \cdot  p (\mathfrak{B}, (\mu_1, \mu_2) )  \ .
\end{equation}

Since we assumed that our dilation structure were polygonable Lemma \ref{lempentagonal} guarantees that any polygonable structure can be recovered from a room and therefore the parametrisation $p$ of the space of room gives rise to a well defined parametrisation $\tilde{p}$ to the moduli space $\D$:
	$$ \tilde{p} : \mathcal{B} \times \mathcal{Q} \to \D \ . $$ 

We call this map the "local dilation torus parametrisation". Moreover, the naturality of \eqref{eqnaturelroom} implies that this parametrisation is equivariant with respect to both the $\SL_2(\RR)$-action on rooms and on our moduli space $\D$:
given a matrix $A \in \SL_2(\RR)$ and parameters $ (\mathfrak{B} (\mu_1 , \mu_2)) \in \mathcal{B} \times \mathcal{Q}$ we have 
\begin{equation}
	\label{eqnatureldil}
	 \tilde{p} (A \cdot \mathfrak{B}, (\mu_1, \mu_2)) = A \cdot \tilde{p} (\mathfrak{B}, (\mu_1, \mu_2)) \ .
\end{equation}

This parametrisation takes into account in a simple way both the $\SL_2(\mathbb{R})$-action and the holonomy group of the dilation structure.

\begin{definition}
	The subgroup of $\RR_+$ generated by the dilation parameters is called the \textbf{linear holonomy group} associated to the dilation structure $\mathfrak{D}$.
\end{definition}

Since this group is the image of the underlined holonomy map it only depends on the underlined dilation structure $\mathfrak{D}$. \\ 

The local dilation torus parametrisation is not one-to-one since we did not take the mapping class group action into account. It remains to understand what pairs of pentagonal models give rise to the same dilation structure on a one-holed torus. The mapping class group of the torus with one boundary - homomorphic to $\SL_2(\ZZ)$ - writes down explicitly at the level of our space of parameters. \\

\begin{figure}[!h]
	\begin{center}
		\def\svgwidth{0.5 \columnwidth}
			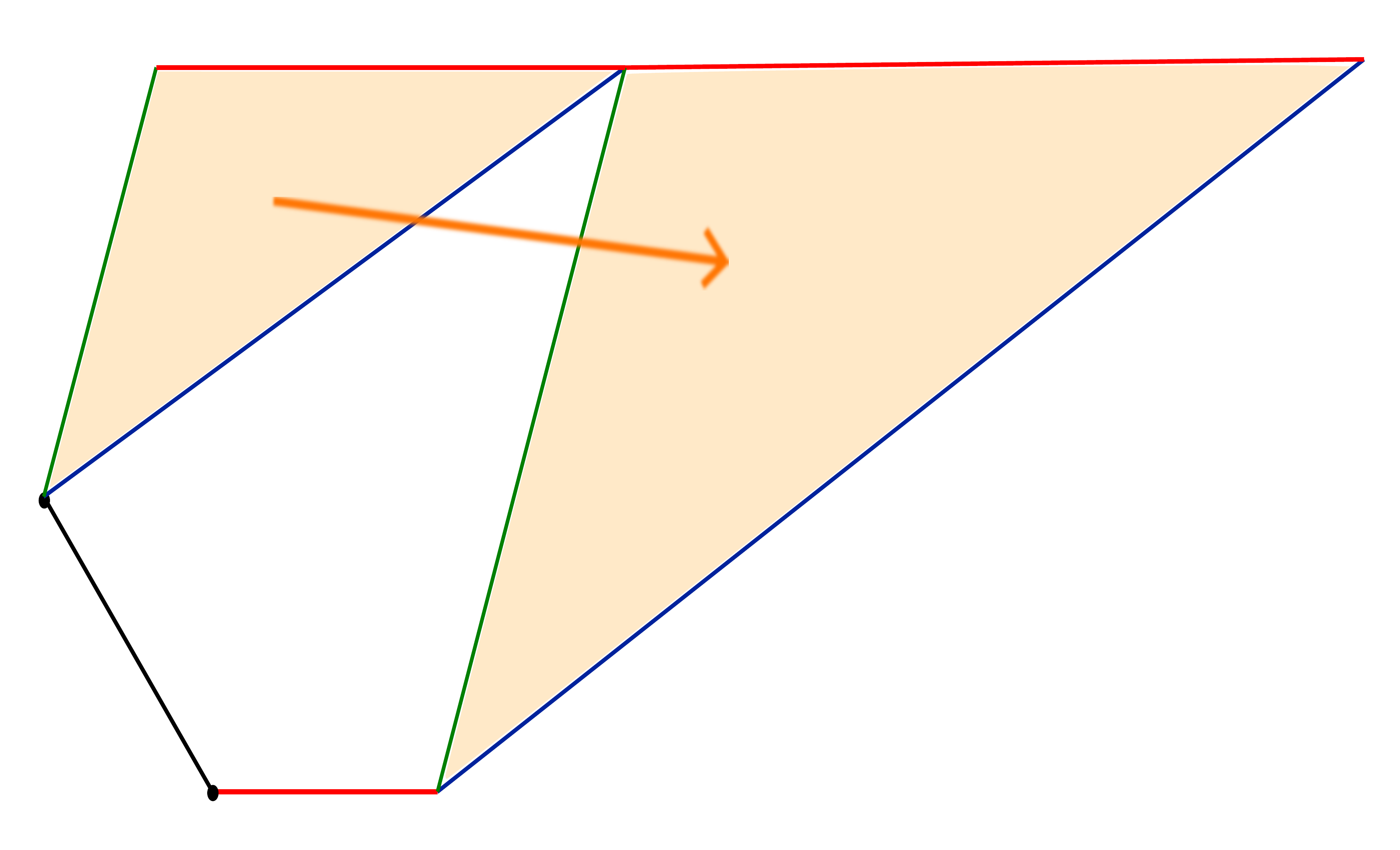
	\end{center}
	\caption{Cutting off the orange upper triangle and gluing it to the left does not change the underlying dilation structure but does change the associated pentagonal model.} 
\label{figmappingclass}
\end{figure}

The following formulas are obtained by cut and paste operations as illustrated in Figure \ref{figmappingclass}; $\tau_1$ and $\tau_2$ correspond to Dehn twists along the two curves generating the homology of $\Sigma_{1,1}$. Figure \ref{figroommappingclass2} shows how to get the first following formula, corresponding to the cut and paste movement described in Figure \ref{figmappingclass}.

\begin{equation}
	\label{eqformulemappingclass}
	\left\{  
		\begin{split}
	 \tau_1 ( e_1, e_2, \mu_1, \mu_2) = ((e_1 + \nu_1 e_2, \nu_1 e_2), \mu_1, \mu_1 + \mu_2) \\
	 	 \tau_2( e_1, e_2, \mu_1, \mu_2) = (\nu_2 e_1, e_2 + \nu_2 e_1, \mu_1 + \mu_2, \mu_2) \\
	 	 (\tau_1)^{-1} ( e_1, e_2, \mu_1, \mu_2) = (e_1 - e_2, \nu_1^{-1} e_2, \mu_1,  \mu_2- \mu_1)  \\
	 	 (\tau_2)^{-1} ( e_1, e_2, \mu_1, \mu_2) = (\nu_2^{-1}e_1, e_2 - e_1, \mu_1 - \mu_2, \mu_2)
		\end{split}
	\right.  
\end{equation}

\begin{figure}[!h]
	\begin{center}
		\def\svgwidth{ 0.5 \columnwidth}
			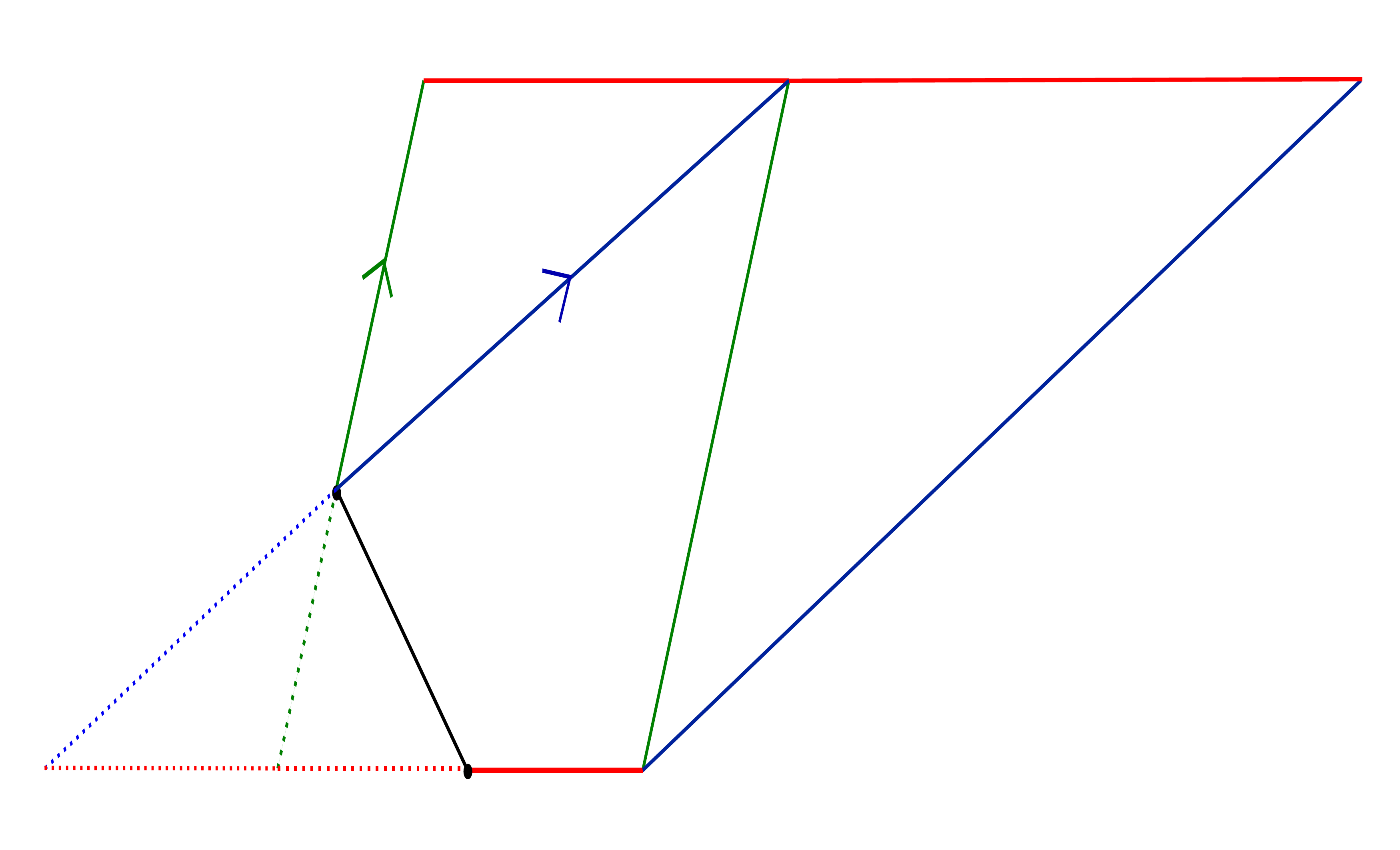
	\end{center}
	\caption{Since we dilated the upper left triangle by a factor $\nu_2$ in pasting it to the right, the vector $e'_1$ is $ \nu_2 e_1$. The vector $e'_2$ is given by adding the vector $e_2$ to the vector $e'_1 = \nu_2 e_1$: $e'_2 = e_2 + \mu_2 e_1$. The new dilation parameters are easy to compute by looking at similar triangles.} 
\label{figroommappingclass2}
\end{figure}

As already emphasised, the action on the log-dilation parameters corresponds to the action on the image of the holonomy representation of the dilation structure obtained by pre-composing with a given element of the mapping class group, seen as acting on the first homology groups. Forgetting the basis component of our parameters this action writes down, as expected from the general theory, simply as the linear action of $\SL_2(\ZZ)$ on $\mathcal{Q} \subset \RR^2$.  \\

Note however that given a room one can not always perform the 4 movements described above: we must make sure that both the resulting dilation parameters remains in $\mathcal{Q}$ in order to be sure that the resulting data defines a dilation structure. In order to take this into account at the level of our space of parameters, we introduce the following equivalence relation: two points $x,y \in \mathcal{B} \times \mathcal{Q}$ are said to be equivalent (we note $x \sim y$) if there is a sequence $\tau^{\epsilon_1}_{i_1}, ..., \tau^{\epsilon_n}_{i_n}$, with $\epsilon_i \in \{-1,1\}$ and $i_j \in \{1,2\}$ such that  
\begin{itemize}
	\item  $ \tau^{\epsilon_n}_{i_n} \circ ... \circ \tau^{\epsilon_1}_{i_1}(x) = y$;
	\item for all $ 1 \le j \le n-1$, $ \tau^{\epsilon_j}_{i_j} \circ ... \circ  \tau^{\epsilon_1}_{i_1} (x)$ belongs in $\mathcal{B} \times \mathcal{Q}$.
\end{itemize}

We will say that two rooms are $\sim$\textbf{equivalent} if there are equivalent under the relation $\sim$. \\

Because of technicalities that we will encounter along the proof of the proposition stated below, we weaken this equivalence relation by forcing the log-dilation parameters to remain in $\RR_+^2$ instead of $\mathcal{Q}$ and exchanging the above second condition defining the equivalence relation $\sim$ by requiring that the sequence $\tau^{\epsilon_j}_{k_j}, ..., \circ  \tau^{\epsilon_1}_{k_1} (x)$ has log-dilations parameters remaining in $\RR_+^2$ all along the process. Since every surface can be represented by a convex room from lemma \ref{lempentagonal}, one will still have a parametrisation. We shall call the sequence described above an \textbf{admissible paths of rooms}. The following Proposition sums up elements of the above discussion:

\begin{proposition}
	\label{propparametrisation}
	The map
		$$ \mathcal{P} \ : \ \quotient{\mathcal{B} \times \RR_+^2}{\sim}  \to \D \ , $$ 
is continuous, onto and equivariant with respect to the $\SL_2(\RR)$-actions already defined on $\D$ and $\mathcal{B}$. 
\end{proposition}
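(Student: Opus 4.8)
The plan is to realise $\mathcal{P}$ as the map induced on the quotient by the restriction of $\tilde{p}$ to $\mathcal{B} \times \RR_+^2$, and then to verify the three asserted properties in turn, beginning with well-definedness. Each generator $\tau_i^{\pm 1}$ of the equivalence relation is, by construction, the action of an element of the mapping class group $\mathrm{Mod}(\Sigma_{1,1}) = \SL_2(\ZZ)$; since $\D$ is by definition the quotient of Teichmüller space by $\mathrm{Mod}(\Sigma_{1,1})$, the map $\tilde{p}$ takes the same value on $x$ and on $\tau_i^{\pm 1}(x)$ whenever both lie in the domain $\mathcal{B} \times \mathcal{Q}$. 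Because every intermediate point of an admissible path remains in $\mathcal{B} \times \RR_+^2 \subset \mathcal{B} \times \mathcal{Q}$, the value of $\tilde{p}$ is constant along admissible paths, so $x \sim y$ forces $\tilde{p}(x) = \tilde{p}(y)$ and $\tilde{p}$ descends to $\mathcal{P}$.

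Continuity is then immediate from the universal property of the quotient topology: writing $\pi$ for the canonical projection $\mathcal{B} \times \RR_+^2 \to \quotient{\mathcal{B} \times \RR_+^2}{\sim}$, one has $\mathcal{P} \circ \pi = \tilde{p}|_{\mathcal{B} \times \RR_+^2}$, which is continuous because $\tilde{p}$ is the composite of the explicit room construction $p$ with the continuous map $\mathcal{R} \to \D$; hence $\mathcal{P}$ is continuous. For equivariance, I would first observe that the $\SL_2(\RR)$-action, which acts only on the basis factor and leaves the log-dilation parameters untouched, commutes with each twist $\tau_i^{\pm 1}$. This is a one-line check on the formulas \eqref{eqformulemappingclass}: the coefficients $\nu_i = e^{\mu_i}$ depend on the $\mu_i$ alone and are therefore unaffected by $A \in \SL_2(\RR)$, so for instance $A \cdot \tau_1(x)$ and $\tau_1(A \cdot x)$ both equal $(Ae_1 + \nu_1 Ae_2, \nu_1 Ae_2, \mu_1, \mu_1 + \mu_2)$. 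Since admissibility of a path depends only on the $\mu$-coordinates, the action preserves $\sim$ and descends to the quotient, and equivariance of $\mathcal{P}$ then follows directly from the naturality relation \eqref{eqnatureldil}.

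The main obstacle is surjectivity, and it is precisely here that the restriction to $\RR_+^2$ in place of all of $\mathcal{Q}$ must be paid for. By Lemma \ref{lempentagonal} the map $\tilde{p} : \mathcal{B} \times \mathcal{Q} \to \D$ is already onto, so it suffices to show that every point of $\mathcal{B} \times \mathcal{Q}$ can be carried into $\mathcal{B} \times \RR_+^2$ by a sequence of twists without changing its image in $\D$. Given $(e_1, e_2, \mu_1, \mu_2)$ with, say, $\mu_2 < 0 \le \mu_1$ and $\mu_1 > 0$, I would apply $\tau_1$ repeatedly: on the log-dilation parameters this sends $(\mu_1, \mu_2) \mapsto (\mu_1, \mu_1 + \mu_2)$, so after finitely many steps the second coordinate becomes non-negative while the first stays equal to $\mu_1 > 0$, and every intermediate point keeps a non-negative first coordinate, hence stays in $\mathcal{Q}$ and corresponds to a genuine room. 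The degenerate case $\mu_1 = 0$, where $\tau_1$ fixes the parameters, is handled by first applying $\tau_2^{-1}$ to reach $(-\mu_2, \mu_2)$ with positive first coordinate and then proceeding as above; the symmetric case $\mu_1 < 0 \le \mu_2$ is treated with $\tau_2$. Underlying all of this is the elementary fact that every orbit of the standard linear $\SL_2(\ZZ)$-action on $\RR^2$ meets the closed quadrant $\RR_+^2$, proved by distinguishing rational from irrational slopes; the real content of the step is to realise this reduction through admissible moves, so that the image in $\D$ is preserved at each stage and the resulting representative is a valid room in $\mathcal{B} \times \RR_+^2$.
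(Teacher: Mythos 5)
Your proof is correct, and for well-definedness, continuity and equivariance it matches what the paper does: the paper gives no formal proof of this proposition (it is presented as a summary of the preceding discussion), with continuity coming from the construction of $p$ and $\tilde{p}$ and equivariance from the naturality relation \eqref{eqnatureldil}, exactly as you argue. Where you genuinely diverge is surjectivity. The paper disposes of it in one sentence: since by Lemma \ref{lempentagonal} every dilation structure is represented by a \emph{convex} room, and convex rooms are precisely those whose log-dilation parameters lie in $\RR_+^2$, restricting the parameters to $\RR_+^2$ loses nothing. You instead use only the weaker half of Lemma \ref{lempentagonal} --- representability by some pentagon, i.e.\ surjectivity of $\tilde{p}$ on $\mathcal{B} \times \mathcal{Q}$ --- and then push an arbitrary parameter of $\mathcal{Q}$ into the positive quadrant by an explicit finite sequence of twists (iterating $\tau_1$ when $\mu_1>0$ and $\mu_2<0$, preceded by one application of $\tau_2^{-1}$ in the degenerate case $\mu_1=0$, and symmetrically), checking that every intermediate point stays in $\mathcal{Q}$ so that the image in $\D$ is unchanged at each stage; your computation of the $\mu$-dynamics of the twists agrees with \eqref{eqformulemappingclass}. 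This is arguably a gain in rigour: the convexity clause of Lemma \ref{lempentagonal} is stated there as an ``in particular'' but its proof (an Euler-characteristic count producing a pentagon) never addresses convexity, and indeed in Section \ref{directionalfoliations} the paper itself achieves convexity only ``up to an extra sequence of gluing and pasting triangles'' --- which is precisely the reduction you carry out in coordinates. The cost of your route is that you must apply the twist formulas at non-convex rooms (parameters in $\mathcal{Q} \setminus \RR_+^2$); this is legitimate within the paper's framework, which declares a move valid exactly when source and target parameters remain in $\mathcal{Q}$. Two minor points: your reduction a priori lands only in the closed quadrant (e.g.\ at $(\mu_1,0)$), which is the same ambiguity the convex-room argument has and is repaired, if needed, by one further application of $\tau_1$; and your closing remark about general $\SL_2(\ZZ)$-orbits meeting $\RR_+^2$ is dispensable, since your constructive argument never uses it.
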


The map just defined will be called the \textbf{dilation torus parametrisation}. 

\begin{remark}
	The equivalence relation just introduced might not seem very natural. One might wonder if $\D$ is actually homomorphic to $$ \quotient{\mathcal{B} \times \RR^2 \setminus \{ 0 \}}{\SL_2(\ZZ)} \ ,$$
where the action of $\SL_2(\ZZ)$ is defined on 
$\mathcal{B} \times \RR^2 \setminus \{0\}$ by extending its action using the formulae given by \eqref{eqformulemappingclass}. This would be a satisfactory  algebraic description of $\D$ as it would make of $\mathcal{B} \times \RR^2 \setminus \{ 0 \}$ and $\SL_2(\ZZ)$ its fundamental group.
\end{remark}

\subsection{The $\SL_2(\RR)$-action}

The following proposition corresponds to the first half of our main theorem. We say that a dilation structure $\mathfrak{D}$ is \textbf{irrational} if the pair of dilation parameters given by any room representing $\mathfrak{D}$ are not rationally proportional.

\begin{proposition}
	\label{propsl2dense}
The $\SL_2(\RR)$-orbit of any irrational dilation structure is dense in $\D$. 
\end{proposition}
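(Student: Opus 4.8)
The starting point is that, read through the parametrisation $\tilde p$ of \eqref{eqnatureldil}, the $\SL_2(\RR)$-action is carried entirely by the basis factor $\mathcal{B}$ and leaves the log-dilation parameters $(\mu_1,\mu_2)$ untouched. Since $\mathcal{B}$ is the space of oriented bases up to positive dilation, it is a single $\SL_2(\RR)$-orbit for the linear action (one identifies $\mathcal{B}\cong\SL_2(\RR)$ with the left regular action). Hence, writing $T=\tilde p(\mathfrak{B},(\mu_1,\mu_2))$, equivariance gives
\begin{equation*}
	\SL_2(\RR)\cdot T=\tilde p\big(\mathcal{B}\times\{(\mu_1,\mu_2)\}\big).
\end{equation*}
First I would feed in the $\sim$-relation of Proposition \ref{propparametrisation}: applying an admissible move $\tau_i^{\pm1}$ to $\mathcal{B}\times\{(\mu_1,\mu_2)\}$ sweeps the whole basis factor again, because for fixed $(\mu_1,\mu_2)$ the change of basis prescribed by \eqref{eqformulemappingclass} is an invertible linear map; thus $\tau_i^{\pm1}$ sends $\mathcal{B}\times\{(\mu_1,\mu_2)\}$ to $\mathcal{B}\times\{\rho(\mu_1,\mu_2)\}$, where $\rho$ is the corresponding linear $\SL_2(\ZZ)$-move on $\RR^2$. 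Consequently, inside $\D$ the orbit coincides with $\mathcal{P}(\mathcal{B}\times O)$, where $O\subset\RR_+^2$ is the \emph{admissible} $\SL_2(\ZZ)$-orbit of $(\mu_1,\mu_2)$, i.e. the set of points reachable by sequences of the four moves of \eqref{eqformulemappingclass} whose intermediate log-parameters all remain in $\RR_+^2$. As $\mathcal{P}$ is continuous and onto, a dense $O$ yields a dense image, so the problem reduces to showing that $O$ is dense in $\RR_+^2$ whenever $\mu_1/\mu_2$ is irrational.

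Next I would analyse the planar dynamics of these moves on $\RR_+^2$. On column vectors they act as $\tau_1=\left(\begin{smallmatrix}1&0\\1&1\end{smallmatrix}\right)$, $\tau_2=\left(\begin{smallmatrix}1&1\\0&1\end{smallmatrix}\right)$ and their inverses; the two expanding moves $\tau_1,\tau_2$ are always admissible, whereas $\tau_1^{-1}$ (resp. $\tau_2^{-1}$) is admissible exactly when $\mu_2>\mu_1$ (resp. $\mu_1>\mu_2$) — that is, the admissible inverse moves are precisely the steps of the subtractive Euclidean algorithm. I would treat direction and magnitude separately. Passing to the slope $s=\mu_2/\mu_1\in(0,\infty)$ turns the moves into the Farey/continued-fraction maps $s\mapsto s+1$, $s\mapsto s/(1+s)$ and their inverses, and the admissible orbit of an irrational slope under this Stern--Brocot dynamics is dense in $(0,\infty)$ — the classical statement that an irrational continued fraction realises any prescribed prefix. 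Moreover, running the subtractive algorithm on $(\mu_1,\mu_2)$ produces points whose magnitudes tend to $0$ (Euclidean remainders tend to $0$) while their slopes explore a dense subset of $(0,\infty)$; this already shows that $\overline{O}$ accumulates at the origin along a dense set of directions.

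The main obstacle is then to upgrade directional density to genuine density in $\RR_+^2$, i.e. to control the magnitude under the admissibility constraint: the expanding moves alter slope and scale simultaneously, so the two cannot be prescribed independently in a single step. My plan is to exploit the always-admissible hyperbolic element $\tau_2\tau_1=\left(\begin{smallmatrix}2&1\\1&1\end{smallmatrix}\right)$, whose attracting eigendirection has the irrational slope $(\sqrt5-1)/2$: starting from the small-magnitude, dense-slope points furnished by the subtractive algorithm, I would bring the slope close to this eigendirection and then iterate $\tau_2\tau_1$ to inflate the magnitude by the controlled factors $\lambda^k$, using the density of the available small scales modulo the multiplicative period $\lambda$ to reach every target scale. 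Combining invariance of $\overline{O}$ under the expanding moves with the accumulation at $0$ along dense directions should give $\overline{O}=\overline{\RR_+^2}$, whence $\overline{\SL_2(\RR)\cdot T}=\D$. The genuinely delicate points, which I expect to be the hard part, are the density of the achievable scales and the bookkeeping guaranteeing that every intermediate configuration stays admissible — exactly the reason the equivalence relation was weakened to $\RR_+^2$ in Proposition \ref{propparametrisation}.
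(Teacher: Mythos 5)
Your reduction to density of the admissible orbit $O\subset\RR_+^2$, and your first step (the subtractive/Gauss algorithm gives admissible paths whose log-parameters tend to $0$, using irrationality), are exactly the paper's first two steps. But from there two things go wrong. First, an intermediate claim is false as stated: running the subtractive algorithm on $(\mu_1,\mu_2)$ does \emph{not} in general make the slopes ``explore a dense subset of $(0,\infty)$'' --- if $\mu_2/\mu_1$ is a quadratic irrational (e.g.\ the golden ratio) its continued fraction is eventually periodic, so the subtractive algorithm visits only finitely many slopes. Dense directions can only come from composing with the always-admissible expanding moves; you gesture at this but conflate it with the contraction step. Second, and more seriously, the step you yourself flag as ``the hard part'' is a genuine gap, not bookkeeping: your mechanism for reaching arbitrary magnitudes (iterating the hyperbolic element $\tau_2\tau_1$ near its eigendirection, then hitting every scale ``modulo the multiplicative period $\lambda$'') requires precisely the unproven statement that the set of achievable small scales is dense multiplicatively; nothing you established implies it. Moreover, iterating $\tau_2\tau_1$ also collapses directions onto its eigenray, so even granting the scales you would only obtain density near one direction and would have to redo the direction argument at large scale, where admissibility of the inverse moves is no longer automatic.

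The paper dissolves this scale/direction tension with a much simpler observation that your sketch misses: once $O$ accumulates at the origin, apply the single parabolic $\tau_1$ (always admissible on $\RR_+^2$) repeatedly to a point $(\epsilon_1,\epsilon_2)$; the outputs $(\epsilon_1,\epsilon_2+n\epsilon_1)$, $n\in\NN$, form an $\epsilon_1$-dense chain along a whole coordinate axis, so $\overline{O}$ contains an entire ray through the origin --- all scales at once. Then every $M\in\SL_2(\NN)$ is a positive word in the two elementary matrices, hence always admissible starting from $\RR_+^2$, so $\overline{O}$ contains $M\cdot(\text{axis ray})$ for all such $M$; these image rays point in all directions $(b,d)$ with $b,d$ coprime, a dense set of directions, and a dense family of full rays is dense in $\RR_+^2$. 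Replacing your hyperbolic-inflation step by this ``sweep an axis with a parabolic, then push the axis around by $\SL_2(\NN)$'' argument turns your outline into the paper's proof.
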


In particular, almost every dilation structure of $\D$ (both in the measurable and in the topological sense) has a dense $\SL_2(\RR)$-orbit. \\

\textbf{Proof.} We shall use the dilation torus parametrisation of $\D$ given by Proposition \ref{propparametrisation}. Since the dilation torus parametrisation is equivariant with respect to the $\SL_2(\RR)$-actions and since $\mathcal{P}$ is continuous and onto one would get the desired conclusion if one would be able to prove that the $\SL_2(\RR)$-orbit of any irrational room $\mathfrak{R}_0 = (\mathfrak{B}_0, \mu_0)$ is dense in the space of rooms $\quotient{ \mathcal{B} \times \RR_+^2}{\sim}$. Which could be rephrased as to show that the image of the set of rooms $\sim$equivalent to $\mathfrak{R}_0$ in the quotient space 
	$$ \mfaktor{\SL_2(\RR)}{\mathcal{B} \times \RR_+^2}$$ 
is dense. \\

Since the $\SL_2(\RR)$-action defined on the set of rooms does not affect the log-dilations parameters and is transitive on $\mathcal{B}$, one is left to show that the image in $\RR_+^2$ of the set of rooms $\sim$equivalent to $\mathfrak{R}_0$ is dense. We will slightly abuse the notation in keeping denoting by $\sim$ the relation in $\RR_+^2$ defined as $x \sim y$ if and only if there is two rooms $\sim$equivalent for which $x$ and $y$ are the log-dilation parameters. \\

The key remark is that an admissible path between two $\sim$equivalent rooms projects on $\RR_+^2$ as a path from which two successive points are related if and only if they are image of one another by an element of 
\begin{equation}
\label{eqrefgeneratingset}
\hspace{0.5 cm}  \begin{pmatrix}
1 & 1 \\
0 & 1 
\end{pmatrix}^{\pm 1}  \hspace{0.5 cm}  \begin{pmatrix}
1 & 0 \\
1 & 1 
\end{pmatrix}^{\pm 1} \ .  
\end{equation} 

Therefore, the question under investigation turns out to be very similar to the more classic fact that the $\SL_2(\ZZ)$-orbit of any point in the place with  rationally independent coordinates $\mu_0$ is dense in $\RR_+^2$, expect that we are only allow to use matrices which can be decomposed in the Cayley graph of $\SL_2(\ZZ)$ (with respect to the generating set \eqref{eqrefgeneratingset}) in a way that all the matrices obtained along this path send $\mu_0$ to $\RR^2_+$.  \\

We split the proof into three  steps: first we show that this set accumulates to $0_{\RR^2}$ from which we will deduce that it also accumulates to $(\RR_+,0)$ and we will conclude by showing that it implies that it is dense in $\RR_+^2$. \\

Note first that Gauss' algorithm gives rise to an admissible path: given any $\mu_0 = (x_0, y_0)$ such that $x_0 / y_0 \notin \QQ$ one can construct an infinite sequence $\mu_n \in \RR_+^2$ inductively as follows. If $\mu_i = (x_i, y_i)$ with $x_i > y_i$ (the other case being treated symmetrically) we define $k_i$ as the larger integer such that $ x_i - k_i y_i > 0 $. We then defined $\mu_{i+1} = (x_i - k_i y_i, y_i)$. Note that the algorithm is well defined and that $\mu_n \tends{n \to \infty} 0_{\RR_2}$ since we suppose $x_0 / y_0 \notin \QQ$. Note also that one can rewrite $\mu_n$ as the projection on the log-coordinates of the element 
	$$ \tau^{- k_j}_{i_j} \circ ... \circ  \tau^{-k_1}_{i_1} (\mathfrak{R}_0) \ , $$
where the index $i_j \in \{1,2\}$ is defined depending on whether $x_{i} > y_i$ or $x_i < y_i$. To sum up, we just showed that the set $ \{ \mu \in \RR_+^2 \ , \ \mu \sim \mu_0 \} $ accumulates to $0_{\RR^2}$. \\

Given $\mu = (\epsilon_1, \epsilon_2)$ ( $\epsilon_1$ and $\epsilon_2$  are thought of very small, as the output of Gauss' algorithm after a large number of steps) one can construct an admissible path from $\mu$ to $ (\epsilon_1, \epsilon_2 + n \epsilon_1)$ in applying $\tau_1$ $n$-consecutive times. As a consequence, the set $ \{ \mu \in \RR_+^2 \ , \ \mu \sim \mu_0 \} $  accumulates on the line $(\RR_+,0)$ since it accumulates to  $0_{\RR^2}$. \\

To conclude, let us remark that any element of $\SL_2(\NN)$ can be decomposed in positive powers of the two following matrices (using the fact that the coefficients of same columns and lines are relatively prime numbers and in using Gauss' algorithm again)
	$$ \begin{pmatrix}
	1& 1 \\ 0 & 1 
	\end{pmatrix} \hspace{1 cm} \begin{pmatrix}
	1& 0 \\ 1 & 1 
	\end{pmatrix} \ ,$$
Any sequence of powers of these two matrices always gives rise to an admissible path for starting point $\mu_0$  (of positive coordinates) since all product of such matrices have positive entries. As a consequence the set $ \{ \mu \in \RR_+^2 \ , \ \mu \sim \mu_0 \} $ accumulates on $\SL_2(\NN) \cdot (\RR_+,0)$, which is dense in $\RR_+^2$, concluding.
 \hfill $\blacksquare$

\section{The directional foliations on the torus with boundary}
\label{directionalfoliations}

\noindent In the present section $\Sigma$ is a fixed dilation torus with boundary. 

\subsection{From directional foliations to maps of the interval}

Recall that $\Sigma$ carries a family $\mathcal{F}_{\theta}$ of transversally affine foliations indexed by angle $\theta \in S^1$. In this paragraph we discuss how, in the case of tori with boundary, the study of these foliations can be reduced to that of certain piecewise continuous maps of the interval.

\vspace{3mm} We say that a direction $\theta \in S^1$ is \textbf{pointing inwards}  if no trajectory in direction $\theta$ meets the interior of the door of the room. Concretely, if the door is in the vertical direction and if the room is lying on the right of the door, the set of direction pointing inwards is $]-\frac{\pi}{2}, \frac{\pi}{2}[$. The directions of the door we call the \textbf{door direction}.  
\noindent Recall that, by Lemma \ref{lempentagonal}, a (triangulable) dilation torus with boundary can be recovered from the gluing of a pentagonal model. Choose such a model for $\Sigma$. Up to an extra sequence of gluing and pasting triangles, this model can be assumed to be convex, as in the Figure \ref{figroom7} below;  

\noindent Consider the diagonals of this pentagon (there are $5$ such diagonals). These project onto closed curves in the $\Sigma$. For any $\theta$ pointing inwards we have the following properties:

\begin{enumerate}
\item at least one of these curves is transverse to $\theta$;

\item the first-return map of $\mathcal{F}_{\theta}$ to this curve is well-defined;

\item this first-return map is piecewise continuous with exactly one discontinuity point;

\item this first-return map is orientation preserving and affine restricted to its interval of continuity.
\end{enumerate}

\begin{figure}[!h]
	\begin{center}
			\def\svgwidth{0.8 \columnwidth}
			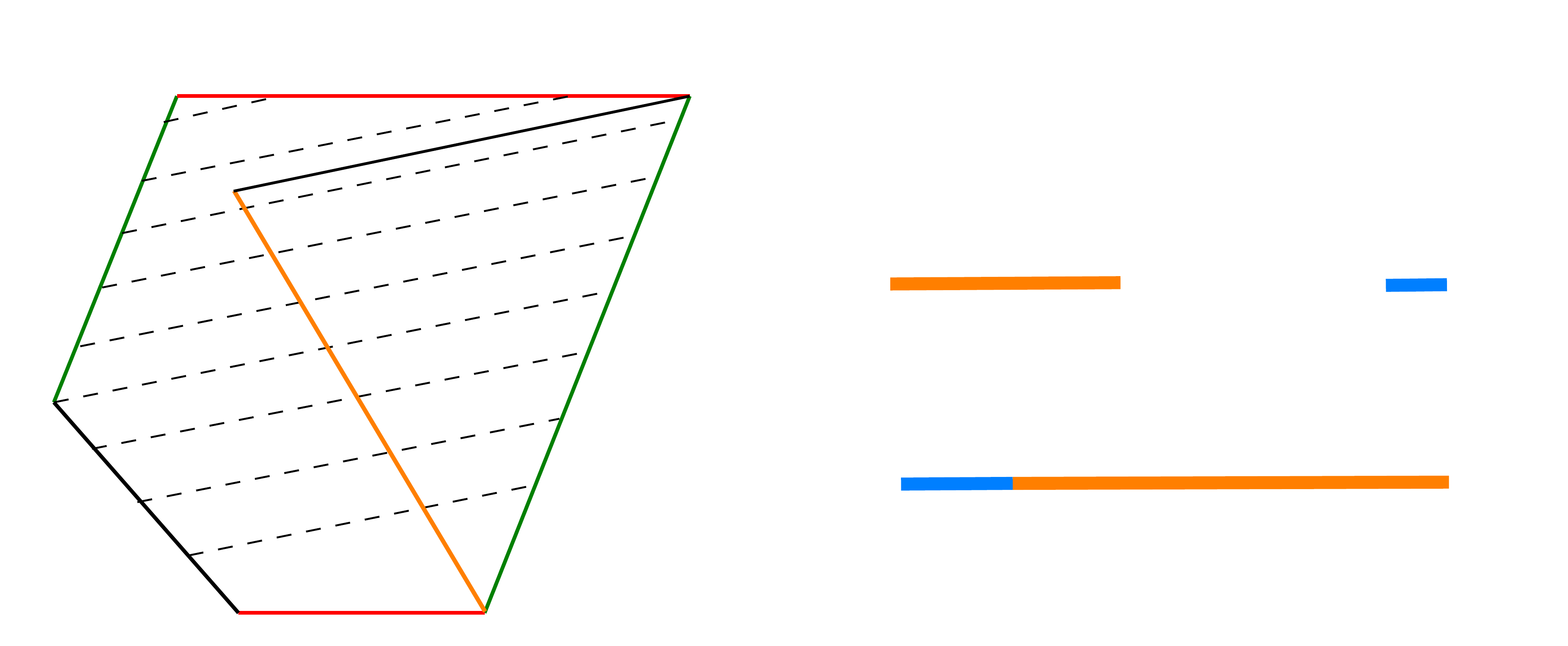
	\end{center}
	\caption{Here, $T$ denotes the first return map on the blue-orange segment associated to the foliation represented by dashed black lines. The point $x$ splits the segment in two pieces (corresponding to the colors) on which the map $T$ acts by dilation. On the right, the associated affine interval exchange representation: on the bottom we represented the partition of the segment with respect to which the map is piecewise affine and on the top the image of such a partition under the map $T$. Note that $x$ is a continuity point of $T$ (as a map from the circle to itself) in this example but not as a map from the segment to itself.}
\label{figroom7}
\end{figure}

We introduce the following definition:

\begin{definition} Let $\rho_A$ and $\rho_B$ two positive number not simultaneously larger than $1$. A $(\rho_A, \rho_B)$-map of the interval $I = [0,1]$ is a map $T$ satisfying the following conditions:

\begin{itemize}
\item $T$ has exactly one discontinuity point $x_T \in ]0,1[$;

\item $T$ is injective

\item $T$ is affine with slope $\rho_A$ restricted to $A = [0,x_T[$ and $\lim_{x \rightarrow x_T^-}{T(x)} = 1$;

\item $T$ is affine with slope $\rho_B$ restricted to $B = ]x_T,1]$  and $\lim_{x \rightarrow x_T^+}{T(x)} = 0$.
\end{itemize}

\noindent Such an $(\rho_A, \rho_B)$-map is uniquely determined by the point $x_T$ and can therefore be parametrised canonically by an affine parameter that we can normalise to take values in $[0,1]$. 
\end{definition}

\vspace{3mm}
\paragraph{\bf Reducing the problem to $(\rho_A, \rho_B)$-maps} The discussion above should have convinced the reader that to each directional foliation we can associate a map $T$ that is close to being a $(\rho_A, \rho_B)$-map, but which isn't always one. The two last conditions of the definitions are not always satisfied. However, one can move from a piecewise affine and continuous map with one discontinuity to a $(\rho_A, \rho_B)$-map by considering the smallest connected interval containing the image of $T$. It is stable, $T$ restricted to such an interval (an suitably rescaled) is a $(\rho_A,\rho_B)$-map and it sees all the dynamics as all the points excluded are mapped by $T$ to it after a single iteration.

\begin{figure}[!h]
	\begin{center}
			\def\svgwidth{0.8 \columnwidth}
			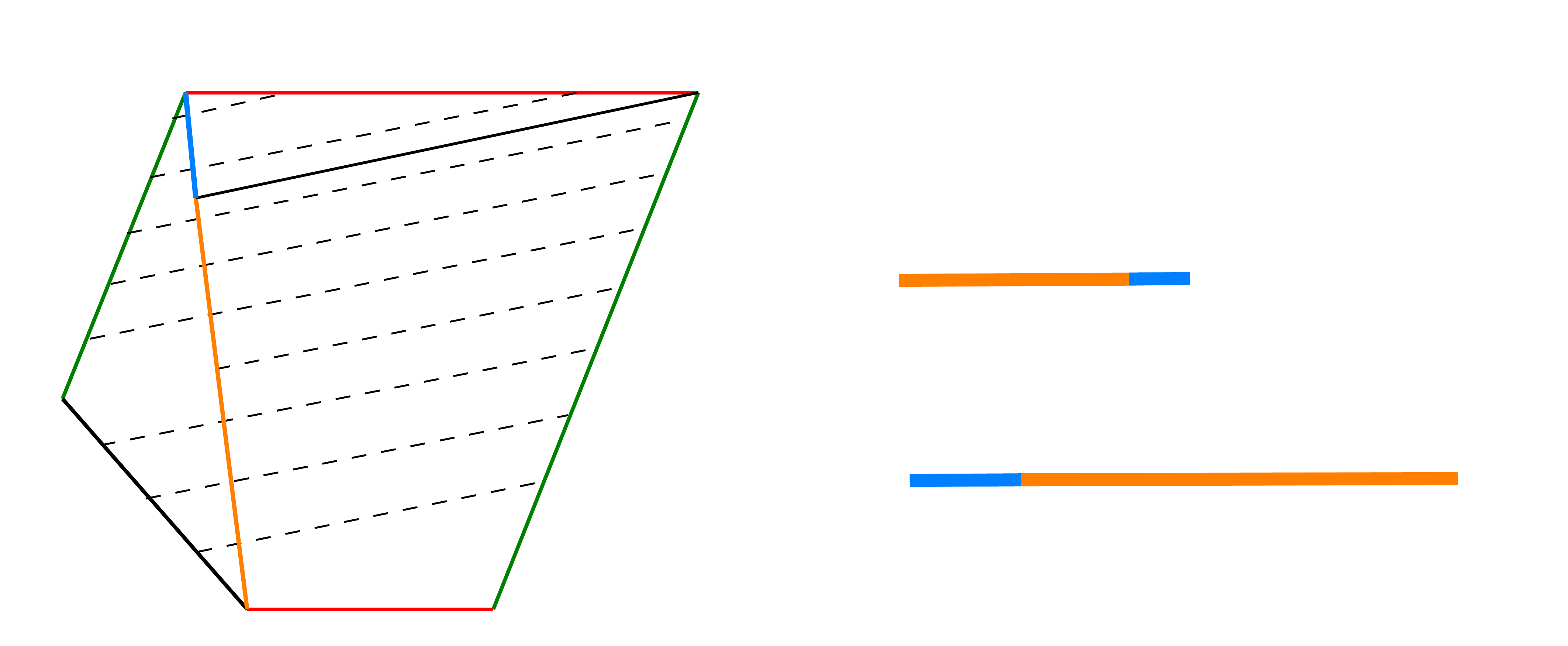
	\end{center}
	\caption{This picture is analogous to Figure \ref{figroom7} but with a different cross-section. The associated interval exchange is not a $(\rho_A,\rho_B)$ map but becomes one in restriction to the image, as shown in Figure \ref{figechangeinter}.}
\label{figroom8}
\end{figure}

\begin{figure}[!h]
	\begin{center}
			\def\svgwidth{0.5 \columnwidth}
			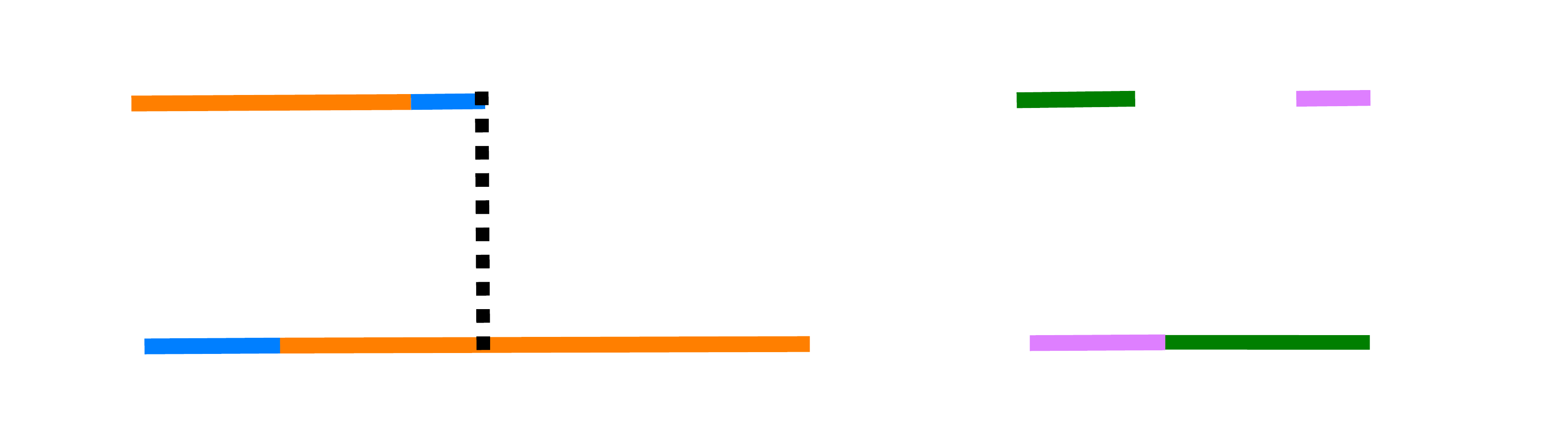
	\end{center}
	\caption{We chop off the interval with respect to the black dotted line (according to the image of $T$). The resulting map, for which the affine interval exchange representation is drawn on the right, is a $(\rho_A,\rho_B)$-map.}
\label{figechangeinter}
\end{figure}

In a sense that we are going to make precise, the study of the directional foliations $(\mathcal{F}_{\theta})$ reduces exactly to that of $(\rho_A,\rho_B)$-maps. We introduce 

$$\mathcal{E}= \mathcal{E}_{\rho_A,\rho_B} = \{(\rho_A, \rho_B)\text{-maps} \} $$ which naturally identifies to an interval, since for example the length of $A$ completely determined the map, which we will consider normalised to be $[0,1]$. If the first-return map of $\mathcal{F}_{\theta_0}$ on a diagonal $D$ yields a $(\rho_A,\rho_B)$-map $T(\theta)$ for certain positive numbers $\rho_A$ and $\rho_B$ it is also the case for a $U$ neighbourhood of $\theta_0 \in S^1$. The induced map

$$ U \longrightarrow \mathcal{E}_{\rho_A,\rho_B}  $$ is smooth. Therefore the analysis of the dynamical behaviour of foliations in the family $(\mathcal{F}_{\theta})_{\theta \in S^1}$ can be reduced to that of countably many families $\mathcal{E}_{\rho_A,\rho_B} $.

\subsection{Rauzy induction for contracting maps}
In this paragraph we introduce a renormalisation scheme to describe the dynamics of $(\rho_A, \rho_B)$-maps.

\vspace{3mm} \noindent We introduce a renormalisation scheme that we call \textit{Rauzy induction} although it is not strictly speaking the standard Rauzy induction used for the study of interval exchange transformation, see Figure \ref{figechangeinter2}. We start with $T$ a $(\rho_A, \rho_B)$-map.

\begin{enumerate}

\item if $x_T$ belongs to $T(A)$, then \textbf{the algorithm returns the first return map o}n $A$ and we say that $A$ is the \textit{winner};
\item if $x_T$ belongs to $T(B)$, then \textbf{the algorithm returns the first return map on} $B$ and we say that $B$ is the \textit{winner};
\item if $x_T$ does not belong to $T([0,1])$ then the \textbf{algorithm stops}.
\end{enumerate}

\begin{figure}[!h]
	\begin{center}
			\def\svgwidth{0.8 \columnwidth}
			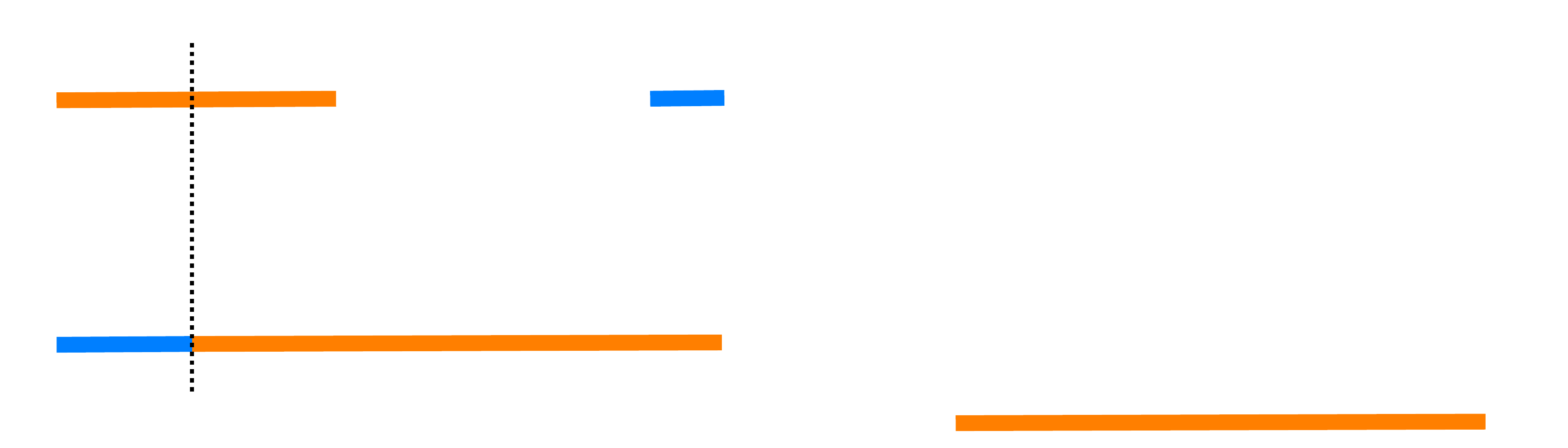
	\end{center}
	\caption{Here $x_T$ belongs to $T(B)$, so that $B$ wins and we consider the first return map on the interval $B$. The affine interval transformation corresponding to the resulting map $\mathcal{R}(T)$ is drawn on the right part of the above picture. In this case the algorithm stops after only one iteration since $x_{\mathcal{R}(T)}$ does not belong neither to $\mathcal{R}(A')$ nor to $\mathcal{R}(B')$.}
\label{figechangeinter2}
\end{figure}

Suppose we are in one of the first two cases. We denote by $\mathcal{R}(T)$ the induced first-return maps. We have the following properties

\begin{itemize}

\item if $A$ is the winner then $\mathcal{R}(T)$ is $(\rho_A, \rho_A\rho_B)$-map;

\item if $A$ is the winner then $\mathcal{R}(T)$ is $(\rho_A \rho_B, 
\rho_B)$-map;

\item if $x_T$ does not belong to $T([0,1])$ then $T$ has an attracting orbit of period $2$.

\end{itemize}

We explain in the following paragraphs how this algorithm allows for a complete description of the dynamics of elements of $\mathcal{E}_{\rho_A,\rho_B} $.

\vspace{3mm}

\paragraph{\bf The case $\rho_A, \rho_B < 1$} 

Recall that $\mathcal{E}_{\rho_A,\rho_B} $ identifies naturally with $[0,1]$. We can look at the subset of this interval corresponding to the three possible outcomes of the above induction. The set $\mathcal{E}_{\rho_A,\rho_B} $ is split into three connected intervals corresponding to the following situations 

\begin{enumerate}

\item the interval on the left $\mathcal{E}_{\rho_A,\rho_B}(L)$  corresponding to $B$ being the winner;

\item the interval on the right $\mathcal{E}_{\rho_A,\rho_B}(R)$  corresponding to $A$ being the winner;

\item the interval in the middle $\mathcal{E}_{\rho_A,\rho_B}(H)$ corresponding to the case when the algorithm stops.

\end{enumerate}

\noindent The latter corresponds to the case where the associated $(\rho_A, \rho_B)$-maps have an attracting periodic orbit of order $2$. In that case the basin of attraction of this attracting orbit is the whole interval. \\

\noindent In the two other cases, the algorithm defines two one-to-one maps 
$$ \mathcal{E}_{\rho_A,\rho_B}(L)  \longrightarrow  \mathcal{E}_{\rho_A\rho_B,\rho_B}   $$ and 

$$ \mathcal{E}_{\rho_A,\rho_B}(R)  \longrightarrow  \mathcal{E}_{\rho_A, \rho_A\rho_B}   $$ and the analysis of the dynamics of elements of $ \mathcal{E}_{\rho_A,\rho_B}(L) $ and $ \mathcal{E}_{\rho_A,\rho_B}(R)$   reduces to that of their images via the above identifications.

 \vspace{2mm} This short discussion provides in this case the inductive step for the construction of a Cantor set. Indeed both $ \mathcal{E}_{\rho_A,\rho_B}(L)$ and $ \mathcal{E}_{\rho_A,\rho_B}(R)$ can be subdivided further into three subintervals. In both case the subinterval in the middle corresponds to the existence of an attracting periodic orbit whereas the left and right interval corresponds to element for which the algorithm can be continued and these subinterval identify with entire parameters spaces $\mathcal{E}_{\rho_A',\rho_B'}$ for new values $(\rho_A', \rho_B')$. This way we get 

\begin{itemize}

\item each finite word $W$ in the alphabet $\{R,L\}$ corresponds to a subinterval of  $\mathcal{E}_{\rho_A,\rho_B}(W)$ which identifies via the algorithm to an entire parameter space $\mathcal{E}_{\rho_A(W),\rho_B(W)}$ for a certain pair $(\rho_A(W),\rho_B(W))$;

\item the intersection of all the possible $\mathcal{E}_{\rho_A,\rho_B}(W)$  forms a Cantor set $\mathcal{R}_{\rho_A,\rho_B}$;

\item elements in the complement of this Cantor set all have a unique attracting periodic orbit whose basin of attraction is the entire interval;

\item all but countably many elements of $\mathcal{R}_{\rho_A,\rho_B}$ are infinitely renormalisable (meaning that they can be applied the induction algorithm infinitely many times).

\end{itemize}

This discussion is a summary of a more detailed analysis carried out in the article \cite{BFG}[Section 4 5]. Therein is proved the following theorem.

\begin{theoreme}[\cite{BFG}[Section 4 5]
\label{description}
Assume  $\rho_A, \rho_B < 1$.
\begin{enumerate}

\item The subset of $\mathcal{E}_{\rho_A,\rho_B}$ of infinitely renormalisable maps is contained in a certain Cantor set of Lebesgue measure $0$.
\item The complement of this Cantor set in made of $(\rho_A,\rho_B)$-maps which have a unique attracting periodic orbit. 

\end{enumerate}
\end{theoreme}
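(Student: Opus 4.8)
The plan is to turn the inductive subdivision sketched above into a quantitative statement about the parameter interval. First I would pin down the identification $\mathcal{E}_{\rho_A,\rho_B}\cong[0,1]$ by the position $x_T$ of the discontinuity and locate the three regions explicitly. With $T(A)=[1-\rho_A x_T,1)$ and $T(B)=(0,\rho_B(1-x_T)]$, one checks that $A$ is the winner exactly when $x_T\geq\tfrac{1}{1+\rho_A}$ (the right interval $R$), that $B$ is the winner exactly when $x_T\leq\tfrac{\rho_B}{1+\rho_B}$ (the left interval $L$), and that the algorithm stops on the middle interval $H=\left(\tfrac{\rho_B}{1+\rho_B},\tfrac{1}{1+\rho_A}\right)$, whose length is $\tfrac{1-\rho_A\rho_B}{(1+\rho_A)(1+\rho_B)}$. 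Two monotonicity facts drive everything: the renormalised slopes $(\rho_A,\rho_A\rho_B)$ or $(\rho_A\rho_B,\rho_B)$ stay strictly below $1$, and the product $\rho_A\rho_B$ is multiplied by one of these sub-unit slopes at each step, hence strictly decreases.

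For statement (1), I would realise $\mathcal{R}_{\rho_A,\rho_B}$ as the non-escaping set of the induced dynamics on $[0,1]$: to a word $W\in\{L,R\}^{n}$ corresponds a surviving subinterval $\mathcal{E}_{\rho_A,\rho_B}(W)$ carried by the algorithm onto a full space $\mathcal{E}_{\rho_A(W),\rho_B(W)}\cong[0,1]$, and a map is infinitely renormalisable precisely when it never meets a hole, i.e. lies in $\bigcap_n\bigcup_{|W|=n}\mathcal{E}_{\rho_A,\rho_B}(W)$. The measure estimate reduces to a uniform gap bound: inside each $\mathcal{E}_{\rho_A(W),\rho_B(W)}$ the next hole has intrinsic relative length $\tfrac{1-\rho_A(W)\rho_B(W)}{(1+\rho_A(W))(1+\rho_B(W))}\geq\tfrac{1-\rho_A\rho_B}{4}=:c>0$, since the product stays below $\rho_A\rho_B$ and the slopes stay below $1$. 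The identification maps are fractional-linear and uniformly expanding (the parameter derivative is bounded below by $1/\max(\rho_A,\rho_B)>1$), so by the standard bounded-distortion principle a definite proportion of each surviving component is discarded at every stage; the surviving measure then decays geometrically and $\mathcal{R}_{\rho_A,\rho_B}$ is a Lebesgue-null Cantor set containing all infinitely renormalisable maps.

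For statement (2), I would pull the dynamics back through the renormalisation. In the base case the map lies in a hole, so $x_T\notin T([0,1])$ forces $T(A)\subseteq B$ and $T(B)\subseteq A$; hence $T^2$ preserves each of $A$ and $B$ and is affine there with slope $\rho_A\rho_B<1$, so as a contraction of an interval into itself it has one attracting fixed point on each side, and these form an attracting $2$-cycle whose basin is all of $[0,1]$. For the inductive step, assume the first-return map $\mathcal{R}(T)$ to the winning interval $J$ has a unique attracting periodic orbit with full basin. The computation above shows every point of $[0,1]\setminus J$ enters $J$ after one application of $T$ (e.g. when $A$ wins, $T(B)=(0,\rho_B(1-x_T)]\subseteq A$ because $x_T>\tfrac{\rho_B}{1+\rho_B}$), while the excursion multipliers are products of sub-unit slopes; so the periodic orbit of $\mathcal{R}(T)$ extends to an attracting periodic orbit of $T$ with basin $[0,1]$. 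Since every parameter outside $\mathcal{R}_{\rho_A,\rho_B}$ falls into a hole after finitely many steps, induction endows it with a unique global attracting periodic orbit.

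The main obstacle is the distortion control underlying (1). The gap proportion $c$ is computed in the intrinsic coordinate of each $\mathcal{E}_{\rho_A(W),\rho_B(W)}$, whereas the measure to be bounded lives in the original $[0,1]$; the deeply composed identification $\mathcal{E}_{\rho_A,\rho_B}(W)\to\mathcal{E}_{\rho_A(W),\rho_B(W)}$ could a priori distort proportions badly. The point to verify carefully is therefore that these fractional-linear Markov branches, being uniformly expanding, have summable distortion so that their compositions have uniformly bounded distortion (a Koebe-type estimate); granting this, the removed proportion in the original coordinate stays bounded below by some $c'>0$ and the geometric decay $(1-c')^n$ goes through. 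Statement (2) is comparatively soft, the only delicate point being the verification that every orbit is absorbed into the winning interval so that the basin is genuinely the whole interval.
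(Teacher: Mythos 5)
Your proposal is correct, and it follows the same renormalisation scheme that the paper sets up before the statement: the subdivision of $\mathcal{E}_{\rho_A,\rho_B}$ into the intervals $L$, $H$, $R$, the identification of the two side intervals with full parameter spaces for the new slope pairs, and the nested-interval construction of the Cantor set. The genuine difference is one of completeness rather than of route: the paper does not prove the theorem at all, but refers to \cite{BFG}, where the result is proved for $\rho_A=\rho_B=\frac{1}{2}$, together with the assertion that the proof generalises verbatim. Your sketch is a self-contained version of that argument, uniform in the parameters; your endpoint formulas agree with Lemma \ref{estimates}, and your treatment of statement (2) (the attracting $2$-cycle in the hole as base case, absorption of $[0,1]\setminus J$ into the winning interval in one step, and transfer of the attracting cycle through the first-return map) is complete.

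The one point you leave conditional, the distortion control, does hold, essentially for the reason you indicate, and it is worth recording why the bounds are uniform. The $R$-branch of $\mathcal{E}_{\rho_a,\rho_b}$ is the Möbius map $x\mapsto \frac{(1+\rho_a)x-1}{\rho_a x}$ defined on $\left[\frac{1}{1+\rho_a},1\right]$, with derivative $\frac{1}{\rho_a x^2}$ and pole at $0$; the $L$-branch is its mirror image with pole at $1$. Since the slope pairs are non-increasing along the induction, every $R$-domain (resp. $L$-domain) occurring lies in $\left[\frac{1}{2},1\right]$ (resp. $\left[0,\frac{1}{2}\right]$), hence at distance at least $\frac{1}{2}$ from the corresponding pole, so $\left|\frac{d}{dx}\log g'\right|\le 4$ uniformly over all branches ever used, while every branch expands by at least $\lambda=1/\max(\rho_A,\rho_B)>1$. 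The standard telescoping estimate then bounds the logarithm of the distortion of any composition by $4\sum_{j\ge 1}\lambda^{-j}=\frac{4}{\lambda-1}$, uniformly in the depth; consequently the holes keep a definite proportion in the original coordinate and the measure of the $n$-th generation of surviving intervals decays geometrically, which is exactly what your argument for statement (1) requires. (Alternatively, one can note that a composition of Möbius maps is again Möbius and check directly that its pole stays at distance comparable to the length of the surviving interval.)
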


For the proof of this Theorem we refer to the article \cite{BFG}. Therein the theorem is only proved for $\rho_A = \rho_B = \frac{1}{2}$ but the proof generalises verbatim to the case $\rho_A, \rho_B < 1$.

\vspace{3mm}
\paragraph{\bf The general case}  One can try to implement the same algorithmic procedure to analyse $\mathcal{E}_{\rho_A,\rho_B}$  either $\rho_A$ or $\rho_B$ is larger than $1$. Then main difference in this case is that out of the three possible cases for the first step of the algorithm, one need not necessarily occur. Indeed, if $\rho_A > 1$ and $\rho_A \rho_B > 1$, $A$ cannot be the winner. For if it were, the algorithm would yield a $(\rho_A, \rho_A \rho_B)$-map which cannot be, as both $\rho_A$ and $\rho_A \rho_B$ would be larger than $1$. We leave it to the reader to check that the following Proposition holds true

\begin{proposition}
Assume $\rho_A >1$. We have the following alternative.

\begin{itemize}

\item Either $\rho_A\rho_B <1$ in which case  $\mathcal{E}_{\rho_A,\rho_B}(L)$,  $\mathcal{E}_{\rho_A,\rho_B}(R) $ and  $\mathcal{E}_{\rho_A,\rho_B}(H)$ are non-empty.

\item Or $\rho_A\rho_B >1$ in which case $ \mathcal{E}_{\rho_A,\rho_B} = \mathcal{E}_{\rho_A,\rho_B}(L)$.
\end{itemize}

\end{proposition}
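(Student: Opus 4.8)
The plan is to make the parameter explicit and reduce the three outcomes of the induction to three elementary inequalities whose mutual position is governed entirely by the sign of $1-\rho_A\rho_B$. Set $a := x_T \in (0,1)$. Since a $(\rho_A,\rho_B)$-map has slope $\rho_A$ on $A=[0,a)$ with $T(a^-)=1$ and slope $\rho_B$ on $B=(a,1]$ with $T(a^+)=0$, it is given by $T(x)=1-\rho_A(a-x)$ on $A$ and $T(x)=\rho_B(x-a)$ on $B$, so that $T(A)=[\,1-\rho_A a,\,1)$ and $T(B)=(0,\,\rho_B(1-a)\,]$. As $\rho_A>1$ forces $\rho_B<1$ (the two slopes are never both larger than $1$), the inclusion $T(B)\subseteq[0,1]$ is automatic while $T(A)\subseteq[0,1]$ reads $a\le 1/\rho_A$. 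Injectivity of $T$ is the disjointness $T(A)\cap T(B)=\emptyset$, that is $\rho_B(1-a)<1-\rho_A a$, which rearranges to $a<a_{\max}:=\frac{1-\rho_B}{\rho_A-\rho_B}$. A one-line computation gives $\frac{1}{\rho_A}-a_{\max}=\frac{\rho_B(\rho_A-1)}{\rho_A(\rho_A-\rho_B)}>0$, so injectivity is the binding constraint and $\mathcal{E}_{\rho_A,\rho_B}$ is precisely the parameter interval $a\in(0,a_{\max})$.

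Next I would translate the induction into conditions on $a$. The winner is $B$ (that is, $a\in T(B)$) exactly when $a\le\rho_B(1-a)$, i.e. $a\le\ell:=\frac{\rho_B}{1+\rho_B}$; the winner is $A$ (that is, $a\in T(A)$) exactly when $a\ge 1-\rho_A a$, i.e. $a\ge r:=\frac{1}{1+\rho_A}$; and the algorithm stops (that is, $a\notin T([0,1])$) exactly in the gap $\ell<a<r$. Hence $\mathcal{E}_{\rho_A,\rho_B}(L)$, $\mathcal{E}_{\rho_A,\rho_B}(R)$ and $\mathcal{E}_{\rho_A,\rho_B}(H)$ are the traces on $(0,a_{\max})$ of $(0,\ell]$, $[r,a_{\max})$ and $(\ell,r)$ respectively.

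The crux is then a single sign computation. Cross-multiplying, the numerators of $a_{\max}-r$, $a_{\max}-\ell$ and $r-\ell$ over their (positive) denominators are all equal to $1-\rho_A\rho_B$; for instance $(1-\rho_B)(1+\rho_A)-(\rho_A-\rho_B)=1-\rho_A\rho_B$. Thus $\sgn(a_{\max}-r)=\sgn(a_{\max}-\ell)=\sgn(r-\ell)=\sgn(1-\rho_A\rho_B)$. If $\rho_A\rho_B<1$ the thresholds are ordered $0<\ell<r<a_{\max}$, so the three intervals $(0,\ell]$, $(\ell,r)$, $[r,a_{\max})$ are all non-empty, which is the first alternative. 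If $\rho_A\rho_B>1$ then $a_{\max}<\ell<r$, so the entire domain $(0,a_{\max})$ lies strictly below $\ell$: every valid map has $B$ as its winner, $\mathcal{E}_{\rho_A,\rho_B}(R)$ and $\mathcal{E}_{\rho_A,\rho_B}(H)$ are empty, and $\mathcal{E}_{\rho_A,\rho_B}=\mathcal{E}_{\rho_A,\rho_B}(L)$. This refines the heuristic given just before the Proposition: an $A$-win is impossible because it would produce an illegal $(\rho_A,\rho_A\rho_B)$-map, and the same computation shows that $H$ cannot occur either.

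The main obstacle is purely one of bookkeeping: one must recognise that injectivity, rather than the weaker condition $T(A)\subseteq[0,1]$, cuts out the true parameter interval. Using the larger bound $1/\rho_A$ in place of $a_{\max}$ would spuriously make $\mathcal{E}_{\rho_A,\rho_B}(R)$ or $\mathcal{E}_{\rho_A,\rho_B}(H)$ appear non-empty when $\rho_A\rho_B>1$. Once $a_{\max}$ is correctly identified, all three comparisons collapse onto the single quantity $1-\rho_A\rho_B$ and the dichotomy is immediate.
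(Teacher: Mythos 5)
Your proof is correct. There is, however, no proof in the paper to compare it against: the authors explicitly leave this Proposition to the reader, offering only the consistency heuristic stated just before it (if $A$ were the winner, the induction would output a $(\rho_A,\rho_A\rho_B)$-map with both slopes larger than $1$, which is not a legal object). Your argument is a genuine, self-contained verification: you parametrise $\mathcal{E}_{\rho_A,\rho_B}$ by the discontinuity point $a=x_T$, correctly identify the injectivity bound $a_{\max}=\frac{1-\rho_B}{\rho_A-\rho_B}$ as the binding constraint (checking it is smaller than the naive bound $1/\rho_A$), and reduce everything to the observation that $a_{\max}-r$, $a_{\max}-\ell$ and $r-\ell$ all have numerator $1-\rho_A\rho_B$ over positive denominators. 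This buys two things the paper's heuristic does not: it proves non-emptiness of all three pieces when $\rho_A\rho_B<1$, and it proves emptiness of the hole $\mathcal{E}_{\rho_A,\rho_B}(H)$ when $\rho_A\rho_B>1$, which the illegal-output argument alone does not address (one could alternatively rule out $H$ dynamically: a stop would produce an attracting $2$-periodic orbit, whose multiplier $\rho_A\rho_B$ exceeds $1$). Your thresholds $\ell=\frac{\rho_B}{1+\rho_B}$ and $r=\frac{1}{1+\rho_A}$ also recover the formulas of Lemma \ref{estimates}, so the two parts of the paper are made visibly consistent. Two small blemishes, neither affecting validity: in the case $\rho_A\rho_B>1$ your sign computation gives $r<\ell$, so the ordering is $a_{\max}<r<\ell$ rather than $a_{\max}<\ell<r$ as written --- harmless, since your conclusion only uses $a_{\max}<\min(r,\ell)$, which you do establish; and the definition only forces $\rho_B\le 1$, the degenerate case $\rho_B=1$ giving $a_{\max}=0$, so that $\mathcal{E}_{\rho_A,1}=\emptyset$ and the second alternative holds vacuously.
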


With the Proposition at hand, we can derive a picture for the iteration of the induction algorithm very similar to that of the case $\rho_A, \rho_B < 1$. 

\begin{itemize}

\item Assume $\rho_A >1$ and $\rho_A \rho_B < 1$. In that case  $\mathcal{E}_{\rho_A,\rho_B}$ is split into three subintervals. One of which (the middle one) corresponds to periodic orbits and for whose elements the algorithm stops. Elements of $\mathcal{E}_{\rho_A,\rho_B}(R) $ identify via the algorithm to $\mathcal{E}_{\rho_A\rho_B,\rho_B} $ and we are reduced to the case $\rho_A$ and $\rho_B < 1$. Finally $\mathcal{E}_{\rho_A,\rho_B}(L) $ is still of the form $\rho_A < 1$.

\item If $\rho_A \rho_B > 1$ we can apply the algorithm a finite number of steps until  $\mathcal{E}_{\rho_A,\rho_B}$ identifies to  $\mathcal{E}_{\rho_A',\rho_B'} $ for $\rho_A' \rho_B' < 1$ (each step decreases $\rho_A$ by multiplying it by $\rho_B$). We are thus reduced to the step above.

\end{itemize}

Note that up to the acceleration of the second case, the picture is exactly the same as in the case $(\rho_A, \rho_B)$. After $n$ steps of induction, there are $2^n$ subintervals left (by that we mean that we have chucked out intervals corresponding to periodic orbits). These correspond to $n$-times renormalisable elements of  $\mathcal{E}_{\rho_A,\rho_B}$ and each of them identifies with a set $\mathcal{E}_{\rho_A(W),\rho_B(W)}$ where $W$ is the word in $L$ and $R$ to which it corresponds. Furthermore, the only interval in this collection for which $\rho_A(W)$ is larger than one is the leftmost one, and within the others can be replicated the picture of the case $\rho_A, \rho_B < 1$. From this description we deduce that the conclusions of Theorem \ref{description} still hold true in this case:

\begin{proposition}
\label{description2}
Assume  $\rho_A >1$ and $\rho_B < 1$.
\begin{enumerate}

\item The subset of $\mathcal{E}_{\rho_A,\rho_B}$ of infinitely renormalisable maps is contained in a certain Cantor set of Lebesgue measure $0$.
\item The complement of this Cantor set in made of $(\rho_A,\rho_B)$-maps which have a unique attracting periodic orbit. 

\end{enumerate}
\end{proposition}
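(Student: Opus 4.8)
The plan is to organise the iterated Rauzy induction into a tree, to feed every branch on which both multipliers drop below $1$ into Theorem~\ref{description}, and to prove the two assertions separately. Assertion (2) I would read off directly from the algorithm. A $(\rho_A,\rho_B)$-map is either infinitely renormalisable or the induction halts after some finite number $k$ of steps; if it halts, $T' = \mathcal{R}^k(T)$ is a $(\rho_A(W),\rho_B(W))$-map whose discontinuity lies outside $T'([0,1])$. Halting can only happen at a stage where $\rho_A(W)\rho_B(W) < 1$, since by the dichotomy established just above the middle (``halting'') interval is empty as soon as the product exceeds $1$. When the discontinuity falls in the gap of $T'([0,1])$, the two pieces of the image are interchanged by $T'$ and every point of $[0,1]$ enters them after one iterate, where $(T')^2$ is affine of slope $\rho_A(W)\rho_B(W) < 1$ and hence contracting; this yields a single attracting orbit of period $2$ with full basin, proving (2) for $T'$ and therefore for $T$.

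For assertion (1) I would first dispose of the case $\rho_A\rho_B > 1$: there $\mathcal{E}_{\rho_A,\rho_B} = \mathcal{E}_{\rho_A,\rho_B}(L)$, the induction is forced, each forced step multiplies the first multiplier by $\rho_B < 1$, and after finitely many steps the product drops below $1$; since $\mathcal{R}$ identifies the surviving interval with a full parameter space, the statement reduces to the regime $\rho_A > 1$, $\rho_A\rho_B < 1$. In that regime I would run the standard removal procedure: at each stage the surviving mixed interval splits as $\mathcal{E}(L)\sqcup\mathcal{E}(H)\sqcup\mathcal{E}(R)$; I discard the open middle interval $\mathcal{E}(H)$, whose maps carry an attracting periodic orbit, one of the two outer intervals is identified by $\mathcal{R}$ with a full parameter space having both multipliers $< 1$, and the other remains of mixed type $(>1,<1)$ after reabsorbing the finite acceleration whenever the product crosses $1$. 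Iterating produces a tree whose leaves at each level are countably many both-multipliers-$<1$ intervals together with a single nested mixed interval $L_n$.

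On each both-multipliers-$<1$ leaf, Theorem~\ref{description} applies verbatim: its infinitely renormalisable maps lie in a measure-zero Cantor set and its complement consists of maps with a unique attracting periodic orbit. Summing over the countably many such leaves, the maps that ever enter one of them form a measure-zero set; and the closed residual set obtained by deleting all the open middle intervals is totally disconnected and perfect, hence a Cantor set containing every infinitely renormalisable map, which gives the ``contained in a Cantor set'' clause. It therefore only remains to control the maps that stay of mixed type forever, namely the nested intersection $\bigcap_n L_n$.

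This last point is the main obstacle, being the only piece not delivered by Theorem~\ref{description}. As the intervals $L_n$ are nested and the mixed branch is non-branching, $\bigcap_n L_n$ is an interval, Lebesgue-negligible precisely when $|L_n|\to 0$. I would establish this by a uniform-proportion estimate: the discarded part $\mathcal{E}(H)$ together with the adjacent both-multipliers-$<1$ leaf occupies a proportion of the mixed interval bounded below by some $c > 0$, uniformly along the branch, so that $|L_{n+1}| \le (1-c)|L_n|$ and $\bigcap_n L_n$ collapses to a single point. Equivalently, the induction acting on the affine parameter of $\mathcal{E}_{\rho_A,\rho_B}\cong[0,1]$ is uniformly expanding with bounded distortion along the mixed branch. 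This is the same distortion computation underlying the proof of Theorem~\ref{description} in \cite{BFG}; the only new feature is that the multipliers vary along the mixed branch, so the estimate must be made uniform over the bounded range of multiplier pairs actually visited, which I expect to follow from the same argument.
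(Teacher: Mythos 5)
Your proposal follows the same route as the paper's: accelerate the forced stretches when $\rho_A\rho_B>1$, iterate the induction so that at every level all surviving intervals but one have both multipliers $<1$ (and are handled by Theorem~\ref{description}), the remaining one being of mixed type, and read assertion (2) off the halting alternative --- your observation that halting can only occur when the current product is $<1$, so that the square of the halted map is an affine contraction and produces an attracting $2$-periodic orbit with full basin, is exactly the paper's description. Where you go beyond the paper is in recognising that the argument is incomplete until one shows that the nested mixed intervals $L_n$ have intersection of measure zero: the paper passes over this silently (``from this description we deduce\dots'') and addresses it only implicitly, through the later unproved proposition on the door direction asserting that these nested intervals shrink onto the extremal point of $\mathcal{E}_{\rho_A,\rho_B}$ with multipliers tending to $(1,1)$.

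Your proposed closure of that point, however, has a genuine gap. The proportion estimate is fine \emph{in the renormalised coordinates}: along the mixed branch the first multiplier never increases (choice steps fix it, forced steps multiply it by the second multiplier, which is $<1$), so Lemma~\ref{estimates} shows that a definite proportion, at least $1/(1+\rho_A)$, of the current interval is discarded at every choice step, and choice steps occur infinitely often. The gap is the transfer back to the original parameter, i.e.\ precisely your ``uniformly expanding with bounded distortion'' assertion. This cannot be imported ``verbatim'' or by a compactness argument from \cite{BFG}: the multiplier pairs visited along the mixed branch do not stay in a region where the estimates of \cite{BFG} are uniform --- they accumulate on the degenerate corner $(1,1)$ (this is the content of the paper's door-direction proposition), where the hole sizes of Lemma~\ref{estimates} vanish and the induction ceases to expand. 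Worse, each identification map has derivative $1/\rho_A^{(n)}<1$ at the endpoint common to all the $L_n$, so the compositions are not even pointwise expanding there, and one would have to control the distortion of compositions of M\"obius maps by hand. A cleaner way to finish (which simultaneously proves the paper's door-direction claim) avoids distortion altogether: the complement of the image of a $(\rho_A,\rho_B)$-map, its ``hole'', is carried unchanged into every renormalisation domain, so for a map lying in $\bigcap_n L_n$ the length of the hole relative to the $n$-th domain is at least its absolute length; on the other hand that relative length is at most $1-\rho_B^{(n)}$, and $\rho_B^{(n)}\to 1$ along the mixed branch (forced steps must occur infinitely often, since otherwise the second multiplier would be repeatedly multiplied by a constant $>1$; passing to the limit in the forced-step recursion $\rho_A^{(n+1)}=\rho_A^{(n)}\rho_B^{(n)}$ then forces $\rho_B^{(n)}\to1$). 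Hence the hole has zero length, the map is the unique surjective (Herman-family) map, and $\bigcap_n L_n$ is a single point, as required.
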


\subsection{Estimates on the size of the hole}

In order to prove convergence or divergence of orbits of the geodesic flow, we will some quantitative estimates on the size of intervals of parameters corresponding to periodic orbits of a given combinatorics.

\begin{lemma}
\label{estimates}
We have 

\begin{itemize}

\item $ | \mathcal{E}_{\rho_A,\rho_B}(R) | = \frac{\rho_A}{1 + \rho_A}$;

\item $ | \mathcal{E}_{\rho_A,\rho_B}(L) | = \frac{\rho_B}{1 + \rho_B}$;

\item $ \mathcal{E}_{\rho_A,\rho_B}(H) $ is non-empty if and only if $\rho_A \rho_b < 1$ in which case $ | \mathcal{E}_{\rho_A,\rho_B}(H) | = \frac{1- \rho_A\rho_B}{(1 + \rho_A)(1 + \rho_B)}$.

\end{itemize}

\end{lemma}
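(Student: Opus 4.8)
The plan is to reduce everything to an explicit description of a $(\rho_A,\rho_B)$-map in terms of its single discontinuity $x_T$, and then read off the three regimes of the Rauzy induction as explicit subintervals of the parameter interval. First I would use the defining conditions to write $T$ down exactly: on $A=[0,x_T[$ the map has slope $\rho_A$ with $\lim_{x\to x_T^-}T(x)=1$, so $T(x)=1-\rho_A(x_T-x)$ and hence $T(A)=[\,1-\rho_A x_T,\,1\,[$; on $B=]x_T,1]$ the map has slope $\rho_B$ with $\lim_{x\to x_T^+}T(x)=0$, so $T(x)=\rho_B(x-x_T)$ and hence $T(B)=]\,0,\,\rho_B(1-x_T)\,]$. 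Since such a $T$ is determined by $x_T$, I parametrise $\mathcal{E}_{\rho_A,\rho_B}$ by $x_T$, normalised so that it ranges over $[0,1]$; in the contracting regime $\rho_A,\rho_B<1$ every $x_T\in\,]0,1[$ yields an admissible injective map, so this normalisation is the identity.

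Next I would translate the trichotomy defining $\mathcal{E}(R)$, $\mathcal{E}(L)$ and $\mathcal{E}(H)$ into inequalities on $x_T$. By the very definition of the induction, $A$ wins (the $R$-case) exactly when $x_T\in T(A)$, i.e. $x_T\geq 1-\rho_A x_T$; $B$ wins (the $L$-case) exactly when $x_T\in T(B)$, i.e. $x_T\leq\rho_B(1-x_T)$; and the algorithm stops (the $H$-case) exactly when $x_T\notin T([0,1])=T(A)\cup T(B)$. The two threshold values are therefore the solutions of $x_T=1-\rho_A x_T$ and $x_T=\rho_B(1-x_T)$, namely
$$ x_R:=\frac{1}{1+\rho_A}, \qquad x_L:=\frac{\rho_B}{1+\rho_B}. $$
Thus, in the contracting regime, $\mathcal{E}(L)=[0,x_L]$ sits to the left, $\mathcal{E}(R)=[x_R,1]$ sits to the right, and $\mathcal{E}(H)=\,]x_L,x_R[\,$ is the middle gap.

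Finally I would read off the measures. With $x_T$ ranging over $[0,1]$ one gets immediately $|\mathcal{E}(L)|=x_L=\frac{\rho_B}{1+\rho_B}$ and $|\mathcal{E}(R)|=1-x_R=\frac{\rho_A}{1+\rho_A}$. The middle interval is non-empty precisely when $x_L<x_R$, which rearranges to $\rho_A\rho_B<1$, and in that case
$$ |\mathcal{E}(H)|=x_R-x_L=\frac{1}{1+\rho_A}-\frac{\rho_B}{1+\rho_B}=\frac{1-\rho_A\rho_B}{(1+\rho_A)(1+\rho_B)}. $$
As a consistency check the three lengths sum to $1$ whenever $\rho_A\rho_B<1$, confirming that $L$, $H$, $R$ genuinely partition the parameter interval. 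The one point I would treat most carefully — and the only real obstacle, since the algebra itself is elementary affine geometry — is the bookkeeping of the parameter normalisation together with the ordering of the three regions: one must verify that $x_L\leq x_R$ in the contracting regime so that the three sets really are consecutive subintervals and their measures are the differences of consecutive thresholds, and separately record that when $\rho_A\rho_B\geq 1$ the thresholds cross, the $H$-region collapses, and the description degenerates to the alternative already isolated in the preceding Proposition.
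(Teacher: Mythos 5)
Your derivation is correct and, since the paper states this lemma without any proof, it is precisely the elementary computation being taken for granted: parametrising a $(\rho_A,\rho_B)$-map by its discontinuity $x_T$, the two thresholds $x_L=\frac{\rho_B}{1+\rho_B}$ and $x_R=\frac{1}{1+\rho_A}$ cut the parameter interval into $\mathcal{E}_{\rho_A,\rho_B}(L)$, $\mathcal{E}_{\rho_A,\rho_B}(H)$, $\mathcal{E}_{\rho_A,\rho_B}(R)$, and the three lengths follow by subtraction. One caveat on your closing paragraph: your case split (contracting versus $\rho_A\rho_B\geq 1$) omits the intermediate regime $\rho_A>1>\rho_A\rho_B$. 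There injectivity of $T$ forces $\rho_B(1-x_T)<1-\rho_A x_T$, i.e.\ $x_T<x_{\max}:=\frac{1-\rho_B}{\rho_A-\rho_B}<1$, so the admissible parameter set is a proper subinterval of $]0,1[$: the normalisation you flagged is then genuinely not the identity, $\mathcal{E}_{\rho_A,\rho_B}(R)$ is truncated at $x_{\max}$ rather than at $1$, and the stated formula for $|\mathcal{E}_{\rho_A,\rho_B}(R)|$ (and, after rescaling to $[0,1]$, all three formulas) no longer holds literally; only the non-emptiness criterion $\rho_A\rho_B<1$ and the qualitative statement that the hole has size on the order of $1-\rho_A\rho_B$ survive. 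Since that is exactly the regime in which the paper later invokes this lemma (door directions), it is worth recording explicitly that your proof, like the lemma's formulas themselves, is exact only when $\rho_A,\rho_B\leq 1$.
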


The important information that this Lemma tells us, beyond the formulae is the following 

\begin{itemize}

\item if $\rho_A$ and $\rho_B$ are simultaneously very small, then the "hole" is very big (its size in $[0,1]$ is about $1- \rho_A\rho_B$);

\item if $\rho_A$ and $\rho_B$ are both very close to $1$, then the hole is very small (again, its size is proportional to $1- \rho_A\rho_B$), its position is close to $\frac{1}{2}$ and is therefore negligible before both $\mathcal{E}_{\rho_A,\rho_B}(L)$ and $\mathcal{E}_{\rho_A,\rho_B}(R)$.

\end{itemize}

\subsection{Door directions and Herman's family}

The analysis of door directions is reduced by means of first return maps to the special family of piecewise affine circle diffeomorphisms with two discontinuities of the derivative and such that one of these two discontinuity is mapped to the other. Such a map is completely determined by the data $\rho_A, \rho_B)$ and corresponds to an extremal point of $\mathcal{E}_{\rho_A,\rho_B}$. 

\noindent This family has appeared in \cite[p79, Section 7.3]{Herman1}. In \cite{LiousseMarzougui}, the authors prove that the map which associate to $(\rho_A,\rho_B)$ the rotation number of the associated circle homeomorphism is analytic. This implies that for almost every surface in $\mathcal{D}$, the flow in the door direction is minimal (because the associated first return map has irrational rotation number). 
\noindent Moreover, it is explained in \cite{LiousseMarzouhui} that in this family every element is conjugate to the rotation of same rotation number via a piecewise analytic map. This strongly suggests that orbit of the Teichmüller flow of an element of $\mathcal{D}$ in the door direction should accumulate a moduli space of translation tori (which lies in the boundary of $\mathcal{D}$ and which corresponds to collapsing the boundary component).

\section{The Teichmüller flow}
\label{secTeich}

The goal of this Section is to establish the following Theorem

\begin{theoreme}

For any $T$ in $\mathcal{D}$ we have that $g_t(T)$ diverges.

\end{theoreme}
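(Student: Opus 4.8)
The plan is to reduce the divergence of $g_t(T)$ to the behaviour of a single embedded dilation cylinder under the Teichmüller flow, together with two compactness criteria for $\mathcal{D}$ phrased in terms of such cylinders. First I would record the two divergence criteria that the dilation cylinders are meant to supply: a sequence of surfaces in $\mathcal{D}$ carrying embedded dilation cylinders $C_{\theta_n}^{\lambda_n}$ with $\lambda_n$ bounded away from $1$ and $\theta_n \to \pi$ leaves every compact set of $\mathcal{D}$ (any limit would contain an embedded open affine cylinder of angle $\pi$, hence be non-triangulable by Veech's criterion, so not in $\mathcal{D}$); and, symmetrically, a sequence carrying such cylinders with $\theta_n \to 0$ also leaves every compact set (the cylinder degenerates to a thin angular sliver while its multiplier stays fixed, forcing the transverse/systolic data to blow up). The estimates of Lemma \ref{estimates} on the size of the ``hole'' are exactly what is needed to make the second criterion quantitative.

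Next I would check that every $T \in \mathcal{D}$ carries at least one embedded dilation cylinder. By Theorem \ref{description} and Proposition \ref{description2} (together with the obvious symmetry between the two slopes), for every admissible pair of slopes the set of parameters in $\mathcal{E}_{\rho_A,\rho_B}$ whose map has an attracting periodic orbit has full measure, so all but a measure-zero set of directions $\theta \in S^1$ give a foliation $\mathcal{F}_\theta$ with an attracting periodic orbit. Such a closed leaf is the core of a maximal embedded dilation cylinder whose multiplier $\lambda$ equals the affine derivative of the first-return map, in particular $\lambda \neq 1$. Because an attracting orbit is structurally stable under a small perturbation of $\theta$, each maximal cylinder has positive angular width, i.e.\ its set of closed-leaf directions is a genuine open interval $(\phi_0,\phi_1) \subset S^1$, and these intervals cover $S^1$ up to a set of measure zero.

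The geometric heart of the argument is the evolution of this angular interval under $g_t$. A direction $\phi$ is carried by $g_t=\mathrm{diag}(e^t,e^{-t})$ to the direction $\arctan(e^{-2t}\tan\phi)$, so among directions (lines through the origin) the horizontal direction is attracting and the vertical direction repelling as $t \to +\infty$. Fix a cylinder of $T$ with angular range $(\phi_0,\phi_1)$, chosen — among the countably many cylinders, after discarding the at most two whose range has the vertical direction as an endpoint — so that the vertical direction is either interior or exterior to $[\phi_0,\phi_1]$. If the vertical direction lies in $(\phi_0,\phi_1)$, the two endpoints flow to the horizontal direction from opposite sides and the angle of the corresponding cylinder of $g_t(T)$ increases to $\pi$; if the vertical direction lies outside $[\phi_0,\phi_1]$, both endpoints flow to the horizontal direction from the same side and the angle decreases to $0$. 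The multiplier is unchanged throughout, since linear holonomy is $\mathrm{SL}_2(\mathbb{R})$-invariant.

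In either case one of the two divergence criteria applies to $g_t(T)$ as $t \to +\infty$, so $g_t(T)$ eventually leaves every compact subset of $\mathcal{D}$; the case $t \to -\infty$ is identical with the roles of the horizontal and vertical directions exchanged. The main obstacle I expect is not the dichotomy above but the rigorous proof of the two divergence criteria, and in particular the $\theta_n \to 0$ one: one must convert ``a dilation cylinder of fixed multiplier degenerates in angle'' into an honest statement about leaving compact sets of $\mathcal{D}$, controlling the normalisation up to dilation and ensuring that the complementary part of the surface cannot compensate for the degeneration. This is where the quantitative control of Lemma \ref{estimates} and a clean description of compact exhaustions of $\mathcal{D}$ (via a uniform lower bound on cylinder moduli and on saddle-connection lengths) should do the real work.
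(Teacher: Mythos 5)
Your first criterion, and the case of your dichotomy in which the repelling fixed direction of $g_t$ lies in the interior of some cylinder's angular sector, reproduce the paper's ``cylinder directions'' case and are sound (modulo your opposite convention for $g_t$: in the paper the horizontal direction is the repelling one): the image cylinder's angle tends to $\pi$, and triangulability via Veech's criterion, packaged in the paper as Proposition \ref{divergence1} through the continuous function $\Theta$ with values in $]0,\pi[$, gives divergence. The fatal problem is your second divergence criterion: it is false as stated. The existence in $T_n$ of a \emph{single} cylinder whose angle tends to $0$ and whose multiplier is bounded away from $1$ does not force $(T_n)$ to leave every compact set; a constant sequence already violates it. Indeed, a fixed surface $T_0$ possessing a Cantor direction (such surfaces exist by Theorem \ref{description}) contains infinitely many cylinders accumulating on that direction, produced by deeper and deeper steps of the Rauzy induction: their angles tend to $0$, and their multipliers, being determined by longer and longer products of the factors $\rho_A,\rho_B<1$, tend to $0$, hence are bounded away from $1$. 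The criteria that are actually true, and that the paper proves, are quantified differently: either \emph{all} cylinders of $T_n$ have small angle, i.e.\ $\Theta(T_n)\to 0$ (Proposition \ref{divergence1}), or a single cylinder has angle bounded \emph{below} by some $\epsilon>0$ while its multiplier tends to infinity (Proposition \ref{divergence2}). The hypothesis ``one cylinder of small angle and non-trivial multiplier'' carries no compactness information at all.

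This is not a repairable technicality, because the case your dichotomy funnels into the false criterion is exactly the hard case of the theorem: the case where the repelling direction of $g_t$ is not interior to any cylinder sector, which happens precisely when that direction is a Cantor direction or the door direction. For each \emph{fixed} cylinder $C$ whose sector avoids the repelling direction, the angle of the image cylinder in $g_t(T)$ does tend to $0$, as you say; but the supremum of cylinder angles of $g_t(T)$ need not tend to $0$, since cylinders with sectors closer and closer to the repelling direction are successively magnified by the flow. The paper's proof does the work your proposal defers: in the Cantor case, the bounded distortion estimate (Lemma \ref{distortion}) combined with the hole-size estimates (Lemma \ref{estimates}) produces, for every large $t$, a cylinder of $g_t(T)$ whose angle is uniformly bounded below and whose multiplier blows up, so that Proposition \ref{divergence2} applies---note that here $\Theta(g_t(T))$ does \emph{not} go to $0$; in the door case, the estimates applied to multiplier pairs tending to $(1,1)$ show that all cylinders near the door have proportionally negligible angle, whence $\Theta(g_t(T))\to 0$ and Proposition \ref{divergence1} applies. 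Your proposal contains no substitute for this renormalization analysis, and choosing a different cylinder, as you suggest, cannot help: in these cases no cylinder whatsoever contains the repelling direction in the interior of its sector.
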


\subsection{Two divergence criteria}

In this paragraph we give two criteria to establish divergence of a sequence in $\mathcal{D}$.

\begin{proposition}
\label{divergence1}
Let $(T_n)$ be a subsequence of $\mathcal{D}$ such that 

$$ \Theta(T_n) \longrightarrow 0 \ \text{or} \ \pi$$ as $n$ goes to infinity. Then $T_n$ diverges.

\end{proposition}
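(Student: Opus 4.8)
The plan is to argue by contradiction, exploiting that the \emph{non-polygonable} dilation structures form the boundary at infinity of $\mathcal{D}$: by Veech's triangulability criterion (\cite{veech2,DFG}) a dilation torus fails to be triangulable, and hence to belong to $\mathcal{D}$, exactly when it carries an embedded open affine cylinder of angle $\pi$. So I would reduce divergence to the appearance of such a cylinder. Suppose $(T_n)$ does not diverge; then, after extraction, it stays in a compact set $K\subset\mathcal{D}$ and converges to some $T_\infty\in K\subset\mathcal{D}$. Since $\Theta$ is continuous on $\mathcal{D}$, this forces $\Theta(T_\infty)\in\{0,\pi\}$. The whole point is therefore to show that no genuine element of $\mathcal{D}$ can realise these extremal values, equivalently that on any compact subset of $\mathcal{D}$ the door direction stays uniformly bounded away from $0$ and $\pi$.

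To establish this I would work with the convex pentagonal models of Lemma \ref{lempentagonal}. Represent $T_\infty$ by such a room, its two pairs of parallel sides carried by the gluing vectors $e_1,e_2$. The condition $\Theta(T_\infty)\in\{0,\pi\}$ means the door has become horizontal, which forces the gluing directions to align and the convex pentagon to collapse onto a half-plane sector: in the developed picture the region swept out near the door, together with the dilation identifying its two sides, is precisely an embedded affine cylinder whose angle has degenerated to $\pi$. This is exactly the configuration excluded by the triangulability criterion, so such a $T_\infty$ cannot be a polygonable dilation torus, contradicting $T_\infty\in\mathcal{D}$. Reading this backwards, if $\Theta(T_n)\to 0$ (or $\pi$) the convex models of the $T_n$ approach this forbidden $\pi$-cylinder, so the $T_n$ must eventually leave every compact subset of $\mathcal{D}$, which is the claimed divergence.

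The main obstacle is the rigorous link between the angular coordinate $\Theta$ and the genuine emergence of the $\pi$-cylinder, and in particular excluding that a cut-and-paste move of the mapping class group could "refatten" the collapsing room. This is where the admissible paths of Section \ref{secroom} and the quantitative estimates of Lemma \ref{estimates} enter: the interval of parameters corresponding to a periodic orbit (a cylinder) of the relevant combinatorics has length proportional to $1-\rho_A\rho_B$, and as $\Theta(T_n)\to 0$ the associated first-return data $(\rho_A,\rho_B)$ degenerate so that this cylinder fills an ever larger proportion of a cross-section while a complementary saddle connection shrinks to zero. Using the explicit formulae \eqref{eqformulemappingclass} for the $\tau_i$, one then checks that no admissible sequence of moves can keep the reduced convex model in a fixed compact range of bases once $\Theta$ is close enough to $0$ or $\pi$; equivalently, the systole tends to $0$. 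Making this uniform degeneration precise, and verifying that it is genuinely detected by $\Theta$ rather than removable by the relation $\sim$, is the technical heart of the argument, the remaining steps being the soft compactness and continuity considerations already outlined.
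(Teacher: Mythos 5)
Your first paragraph is, by itself, the paper's entire proof: the paper disposes of Proposition \ref{divergence1} in one line, observing that $\Theta$ is continuous on $\mathcal{D}$ and takes values in the \emph{open} interval $]0,\pi[$, so a sequence with $\Theta(T_n)\to 0$ or $\pi$ can have no subsequence converging in $\mathcal{D}$. Up to that point your proposal is correct and identical in approach to the paper's.

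Everything after that, however, attacks a statement the paper treats as given, and does so incompletely. Once you have extracted a convergent subsequence $T_n\to T_\infty\in\mathcal{D}$, all you need is the \emph{pointwise} fact that this single surface satisfies $\Theta(T_\infty)\in\,]0,\pi[$, which holds because $\Theta$ is by construction an angle attached to a genuine (triangulable) element of $\mathcal{D}$; compactness has already done the rest of the work. No uniform degeneration estimate, no systole bound, no bookkeeping of admissible paths or of the cut-and-paste moves \eqref{eqformulemappingclass}, and no appeal to Lemma \ref{estimates} is required --- all of that quantitative apparatus belongs to the later, harder arguments of the Teichm\"uller-flow section (where one must show that specific orbits actually push $\Theta$ to $0$ or $\pi$), not to this proposition, which is purely soft. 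Moreover, the part you yourself identify as ``the technical heart'' --- the rigorous equivalence between extremal values of $\Theta$ and the emergence of an embedded $\pi$-cylinder, uniformly over mapping-class-group moves --- is left unproven, so as written your text is an incomplete proof of a statement stronger than the one needed. Your geometric intuition (extremal $\Theta$ corresponds to a configuration forbidden by the Veech triangulability criterion) is a sensible account of \emph{why} the range of $\Theta$ is open, but if you simply grant that range fact, as the paper does, your first paragraph already finishes the proof.
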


\begin{proof}
This is a simple consequence of the fact that the function $\Theta$ is continuous on $\mathcal{D}$ an takes its values in $]0, \pi[$.

\end{proof}

\begin{proposition}
\label{divergence2}
Let $(T_n)$ be a subsequence of $\mathcal{D}$ such that each $T_n$ contains a cylinder $C_n$ such that there exists $\epsilon>0$ such that the following holds:

\begin{enumerate}

\item for all $n \in \mathbb{N}$, $\Theta(C_n) > \epsilon$;

\item $\rho(C_n)  \longrightarrow +\infty$ as $n$ goes to infinity.

\end{enumerate}

Then $(T_n)$ diverges.

\end{proposition}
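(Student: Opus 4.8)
The plan is to argue by contradiction, combining the compactness of the hypothetical limit with two standard conformal facts: the exact value of the conformal modulus of a dilation cylinder, and the comparison between the modulus of an embedded annulus and the extremal length of its core. The point to keep in mind is that, with $\Theta(C_n) = \theta_n > \epsilon$ bounded below and the multiplier $\rho(C_n) = \lambda_n \to +\infty$, the cylinders are conformally \emph{thin}, so divergence cannot come from a pinching of the conformal structure alone; it must be extracted from the interplay between the conformal geometry and the linear holonomy.

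First I would set up the contradiction. Suppose $(T_n)$ does not diverge; then a subsequence (relabelled $T_n$) stays in a compact set $K \subset \D$. Since $\mathrm{Mod}(\Sigma_{1,1}) \cong \SL_2(\ZZ)$ acts properly discontinuously on the Teichmüller space $\mathcal{TD}$, I may lift each $T_n$ to a marked structure lying in a \emph{fixed} compact set of $\mathcal{TD}$. This furnishes a constant $M$ such that, in these markings: (i) the underlying conformal structures are represented by flat metrics on $\Sigma_{1,1}$ of uniformly bounded geometry, say with modular parameters $\omega_n$ in a compact part of $\HH$; and (ii) the marked linear holonomy of the two generators of $H_1(\Sigma_{1,1};\ZZ) \cong \ZZ^2$ satisfies $|\mu_1^{(n)}|, |\mu_2^{(n)}| \le M$, where $\nu_i = e^{\mu_i}$. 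The multiplier of any dilation cylinder is the linear holonomy of its core curve, and this factors through $H_1$; so if the core $\gamma_n$ of $C_n$ has class $(a_n,b_n)$ in this marking then $\log \rho(C_n) = a_n \mu_1^{(n)} + b_n \mu_2^{(n)}$, whence $\log \rho(C_n) \le M \, \|(a_n,b_n)\|$.

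Next I would compute the modulus of $C_n$. Writing $w = \log z$, the dilation cylinder $C_\theta^\lambda$ (the angular sector of angle $\theta$ modulo $z \mapsto \lambda z$) is conformally the flat cylinder obtained from the strip $\{0 < \mathrm{Im}\, w < \theta\}$ by the translation $w \mapsto w + \log \lambda$; hence its conformal modulus is $\theta / \log \lambda$. Applied to $C_n$ this gives modulus $\Theta(C_n)/\log\rho(C_n) \ge \epsilon / \log \rho(C_n)$. On the other hand, $C_n$ is an embedded annulus whose core is the essential, non-peripheral simple closed curve $\gamma_n$, so its modulus is at most the reciprocal of the extremal length of $[\gamma_n]$; and on a flat torus of bounded geometry one has $\mathrm{EL}\big((a,b)\big) = |a + b\omega_n|^2 / \mathrm{Im}\,\omega_n \ge c\,\|(a,b)\|^2$ for a constant $c = c(K) > 0$. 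Therefore $\epsilon/\log\rho(C_n) \le C\,\|(a_n,b_n)\|^{-2}$, i.e. $\|(a_n,b_n)\|^2 \le (C/\epsilon)\log\rho(C_n) \le (CM/\epsilon)\,\|(a_n,b_n)\|$. This forces $\|(a_n,b_n)\| \le CM/\epsilon$ to be bounded, and hence $\log\rho(C_n) \le M\|(a_n,b_n)\|$ to be bounded as well, contradicting $\rho(C_n) \to +\infty$.

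The main obstacle is the first step, namely justifying that a compact subset of $\D$ lifts to simultaneously bounded marked data for both the conformal structure and the linear holonomy; this is exactly where proper discontinuity of the mapping class group action and the properness of the marked period/holonomy maps on $\mathcal{TD}$ must be invoked carefully, since $(\mu_1,\mu_2)$ itself is only defined up to the $\SL_2(\ZZ)$-action of \eqref{eqformulemappingclass}. A secondary technical point is transporting the bound $\mathrm{EL}((a,b)) \ge c\,\|(a,b)\|^2$ from the closed torus to the one-holed torus uniformly over $K$; this is harmless because the relevant curves are essential and non-peripheral, so one may cap off (or double) the boundary and compare extremal lengths, the comparison constants being uniform on the compact set $K$.
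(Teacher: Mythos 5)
Your argument is correct in substance, but it takes a genuinely different route from the paper's. The paper's proof is soft: it invokes the facts that a given dilation surface has only finitely many cylinders of angle greater than a fixed $\epsilon$, and that if $T_n$ converged to some $T_\infty$ then (up to a subsequence) the cylinders $C_n$ would converge to a cylinder of angle at least $\epsilon$ in $T_\infty$, forcing $\rho(C_n)$ to converge to the finite multiplier of that limit cylinder --- contradiction. That is short, but it leans on two unproved geometric assertions about dilation surfaces (finiteness of fat cylinders, and persistence of cylinders together with convergence of their multipliers under limits in $\mathcal{D}$). You replace both by classical conformal geometry: the identity $\mathrm{mod}(C)=\Theta(C)/\log\rho(C)$, the bound $\mathrm{mod}(A)\leq 1/\mathrm{EL}(\mathrm{core})$ for embedded annuli, the quadratic growth of extremal length in the homology class, and boundedness of the marked holonomy on a compact lift to Teichm\"uller space. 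What this buys is a uniform quantitative statement --- on any compact $K\subset\mathcal{D}$, every cylinder of angle greater than $\epsilon$ has multiplier at most $C(K,\epsilon)$ --- which immediately implies the proposition, and it only uses continuity of the holonomy and of the underlying conformal structure, which are more standard inputs than the compactness properties of the set of cylinders.

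One step of yours deserves tightening: the passage from the closed torus to the one-holed torus by capping off the boundary is not canonical (the conformal welding of the disk, hence the modular parameter $\omega_n$, depends on choices), so the claim that $\omega_n$ stays in a compact part of $\mathbb{H}$ uniformly over $K$ is not automatic. You can avoid capping altogether: apply the inequality $\mathrm{EL}(\gamma)\,\mathrm{EL}(\delta)\geq i(\gamma,\delta)^2$ (Minsky) directly on the bordered surface, taking $\delta$ to be each of the two marked generators $\alpha,\beta$. Since $i(\gamma_n,\alpha)\geq |b_n|$, $i(\gamma_n,\beta)\geq |a_n|$, and $\mathrm{EL}(\alpha),\mathrm{EL}(\beta)$ are bounded on the compact lift by continuity of extremal length, you get $\mathrm{EL}(\gamma_n)\geq c\,\|(a_n,b_n)\|^2$ uniformly, which is exactly what your final chain of inequalities needs. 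Also note a point you use implicitly and could state: since the linear holonomy factors through $H_1$, any cylinder with $\rho\neq 1$ has homologically nontrivial, hence non-peripheral, core, so $(a_n,b_n)\neq(0,0)$.
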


\begin{proof}

Recall that a given dilation surface has only finitely many cylinders of angle greater than a fixed positive constant $\epsilon$. If a sequence satisfying the hypothesis of the Proposition had a convergent subsequence, (up to extracting a subsequence) the cylinder $C_n$ should converge to a cylinder in the limit surface of angle larger than $\epsilon$ (since there are only finitely such cylinders). Which would imply convergence of $\rho(C_n)$ to the multiplier of the cylinder in the limit surface, hence contradicting the fact that 
$\rho(C_n)  \longrightarrow +\infty$.

\end{proof}

%
%\subsection{The $\mathrm{SL}_2(\mathbb{R})$ action}
%
%\begin{theoreme}
%
%Let $T \in \mathcal{D}$. Then the following holds
%
%\begin{enumerate}
%
%\item If $\mathrm{Im}(\rho(T)) \subset \mathbb{R}^*_+$ is discrete, then $\mathrm{SL}_2(\mathbb{R}) \cdot T$ is closed in $\mathcal{D}$.
%
%\item If $\mathrm{Im}(\rho(T))$ is a dense subgroup of $\mathbb{R}^*_+$, then $\mathrm{SL}_2(\mathbb{R}) \cdot T$ is dense in $\mathcal{D}$.
%
%
%\end{enumerate}
%
%
%
%\end{theoreme}

\subsection{Action of $g_t$ on $S^1$}

The Teichmüller flow $g_t$ acts of the set of directional foliations of a given dilation tori in a natural way. Indeed, there is a natural action of $\mathrm{SL}_2(\mathbb{R})$ on $S^1$ (which is the projective action on the set of semi-lines of $\mathbb{R}^2$ which identifies with $S^1$), and if $\mathcal{F}_{\theta}(T)$ denotes the foliation in direction $\theta$ on $T$ we have that for any $A \in \mathrm{SL}_2(\mathbb{R})$, 

$$ \mathcal{F}_{\theta}(T) \simeq  \mathcal{F}_{g \cdot \theta}(A \cdot T).$$

\noindent In particular we have the following property: if $T$ has a cylinder covering the interval of directions $[\theta_1, \theta_2]$, then $A\cdot T$ has a cylinder of same multiplier covering the set of directions   $[A \cdot \theta_1, A \cdot \theta_2]$.

\vspace{3mm} We now restrict our attention to the action of $g_t = \begin{pmatrix}
e^{-\frac{t}{2}} & 0 \\
0 & e^{\frac{t}{2}}
\end{pmatrix})_{t \in \mathbb{R}}$. The projective action of this one-parameter family preserves the interval of directions $[-\frac{\pi}{2}, \frac{\pi}{2}]$. For $t >0$, $0$ is a repelling fixed point of $g_t$ and $\frac{\pi}{2}$. We give a distortion Lemma which we will use later on.

\begin{lemma}
\label{distortion}
There exists a constant $C>1$ such that for any $t >0$ the following holds. Denote by $I$ the preimage of $[-\frac{\pi}{4}, \frac{\pi}{4}]$ by $g_t$ thought of as a Moebius diffeomorphism of $[-\frac{\pi}{2}, \frac{\pi}{2}]$. Then for any $x$ and $y$ in $I$ we have 

$$ C^{-1} \leq \frac{D(g_t)(x)}{D(g_t)(y)} \leq C.$$ In other words, for any $t$ the distortion of $g_t$ on $g_t^{-1}([-\frac{\pi}{4}, \frac{\pi}{4}])$ is uniformly bounded. 
\end{lemma}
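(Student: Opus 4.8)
The plan is to carry out the computation in the angular coordinate $\theta \in [-\tfrac{\pi}{2}, \tfrac{\pi}{2}]$ and to notice that the condition cutting out $I$ exactly compensates the expansion rate of $g_t$. First I identify the direction $\theta$ with the line spanned by $(\cos\theta, \sin\theta)$; then $g_t = \mathrm{diag}(e^{-t/2}, e^{t/2})$ sends it to the direction $\phi = g_t(\theta)$ determined by $\tan\phi = e^t \tan\theta$. (This is consistent with the stated dynamics: near $\theta = 0$ one has $\phi \approx e^t \theta$, so $0$ is repelling for $t>0$.) Differentiating $\tan\phi = e^t\tan\theta$ gives $\sec^2\phi\, d\phi = e^t \sec^2\theta\, d\theta$, whence
\[ D(g_t)(\theta) = e^t\,\frac{\cos^2\phi}{\cos^2\theta} = e^t\,\frac{1+\tan^2\theta}{1+e^{2t}\tan^2\theta}, \]
where the last equality uses $\tan\phi = e^t\tan\theta$ to eliminate $\phi$.

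Next I record the defining constraint of $I$ in these coordinates: $\phi \in [-\tfrac{\pi}{4}, \tfrac{\pi}{4}]$ is equivalent to $|\tan\phi| \le 1$, i.e. to $|\tan\theta| \le e^{-t}$, so that $u := \tan^2\theta$ ranges over $[0, e^{-2t}]$ as $\theta$ runs through $I$. Writing $D(g_t) = e^t f(u)$ with $f(u) = \frac{1+u}{1+e^{2t}u}$, a one-line computation gives $f'(u) = \frac{1-e^{2t}}{(1+e^{2t}u)^2} < 0$ for $t>0$, so $f$ is decreasing on $[0, e^{-2t}]$. Its extreme values are therefore $f(0) = 1$ and $f(e^{-2t}) = \frac{1+e^{-2t}}{2}$, the crucial point being that $e^{2t}u \le 1$ stays bounded precisely because of the constraint defining $I$. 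Consequently, for any $x,y \in I$,
\[ \frac{D(g_t)(x)}{D(g_t)(y)} = \frac{f(\tan^2 x)}{f(\tan^2 y)} \in \left[\frac{1+e^{-2t}}{2},\ \frac{2}{1+e^{-2t}}\right] \subseteq \left[\tfrac{1}{2},\,2\right], \]
so the lemma holds with $C = 2$, uniformly in $t>0$.

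The computation has no genuine obstacle; the only thing to get right is the bookkeeping of the coordinates and the observation that $I$ is defined exactly so that the potentially large factor $e^{2t}\tan^2\theta$ never exceeds $1$ on it. This is what makes the distortion bound uniform in $t$: on a region whose angular size collapses like $e^{-t}$, the map $g_t$ is close to the linear map $\theta \mapsto e^t\theta$, which has no distortion at all.
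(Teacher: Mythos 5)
Your proof is correct, and it runs on the same mechanism as the paper's: the change of variable $u=\tan\theta$ conjugates the projective action of $g_t$ to the linear family $u\mapsto e^{t}u$, which has no distortion. The difference is one of explicitness. The paper's proof is a one-line soft argument ("$g_t$ is smoothly conjugate away from $\pm\frac{\pi}{2}$ to $x\mapsto\lambda x$, which has uniformly bounded distortion"), which implicitly requires the observation that the intervals $I=g_t^{-1}\left(\left[-\frac{\pi}{4},\frac{\pi}{4}\right]\right)$ all stay inside a fixed compact subinterval of $\left(-\frac{\pi}{2},\frac{\pi}{2}\right)$ — otherwise conjugacy alone gives nothing, since the distortion of $\tan$ blows up at the endpoints. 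Your computation supplies exactly this point (you show $I\subset\left[-\frac{\pi}{4},\frac{\pi}{4}\right]$ via $|\tan\theta|\le e^{-t}$) and then, instead of bounding the two conjugation factors separately (which would give $C=4$, since $\sec^{2}$ ranges over $[1,2]$ there), you track the exact derivative $D(g_t)(\theta)=e^{t}\frac{1+\tan^{2}\theta}{1+e^{2t}\tan^{2}\theta}$ and its monotonicity in $\tan^{2}\theta$, which buys the sharper uniform constant $C=2$. So your argument is a quantitative completion of the paper's sketch rather than a different route, and it is the more self-contained of the two.
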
 

\begin{proof}
Up to a fixed change of coordinates, the family $g_t$ is smoothly conjugate away from $\frac{\pi}{2}$ and $-\frac{\pi}{2}$ to the family $x \mapsto \lambda x$ which has uniformly bounded distortion.

\end{proof}

\subsection{Divergence under the geodesic flow}

We now turn to proving the main theorem of this Section.

\begin{theoreme}

Let $T \in \mathcal{D}$. The orbit of $T$ under the action of the  Teichmuller flow $g_t$ eventually leaves all compact sets of $\mathcal{D}$.

\end{theoreme}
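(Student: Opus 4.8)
The plan is to fix a compact set $K \subset \mathcal{D}$ and rule out that $g_{t_n}T \to T_\infty \in K$ for any sequence $t_n \to +\infty$; producing a contradiction for every such sequence is exactly what "$g_tT$ leaves every compact set eventually" means. Throughout I use the reduction of Section \ref{directionalfoliations}: fixing a diagonal transversal, the first return map of $\mathcal{F}_\theta$ in an inward direction $\theta$ is, after restriction to its image, a $(\rho_A,\rho_B)$-map, so the directions near the repelling direction $0$ of $g_t$ are coordinatised by a subinterval of a parameter space $\mathcal{E}_{\rho_A,\rho_B}$. The two facts I exploit are that $g_t$ preserves the multiplier of every cylinder while moving by the projective action on $S^1$ the interval of directions that the cylinder covers, and that, by Lemma \ref{distortion}, $g_t$ carries the shrinking window $J_t := g_t^{-1}([-\tfrac{\pi}{4},\tfrac{\pi}{4}])$ around $0$ onto $[-\tfrac{\pi}{4},\tfrac{\pi}{4}]$ with distortion bounded independently of $t$. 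I then split according to the behaviour of $\mathcal{F}_0$ on $T$.

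Suppose first that direction $0$ lies in the interior of the parameter interval of a cylinder $C_0$, that is $C_0$ covers an interval of directions $(\theta_1,\theta_2)$ with $\theta_1 < 0 < \theta_2$ and fixed multiplier $\rho_0 \neq 1$. Under the projective action the same cylinder in $g_tT$ covers $(g_t\theta_1, g_t\theta_2)$, and since $g_t\theta_1 \to -\tfrac{\pi}{2}$ and $g_t\theta_2 \to \tfrac{\pi}{2}$ the angular width of $C_0$ tends to $\pi$. Identifying this width with the sector angle of $C_0$, the cylinder degenerates towards the angle-$\pi$ cylinder forbidden by the Veech triangulability criterion. As in the proof of Proposition \ref{divergence2}, a surface carries only finitely many cylinders of angle bounded below, so along a convergent subsequence $g_{t_{n_j}}T \to T_\infty$ the cylinder $C_0$ would converge to an embedded cylinder of angle $\pi$ in the triangulable surface $T_\infty$, which is impossible; hence no such subsequence exists. (This is the regime in which a global angle $\Theta(g_tT)$ degenerates, so one may alternatively invoke Proposition \ref{divergence1}.)

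In the complementary case direction $0$ is infinitely renormalisable, hence is the nested intersection of renormalisation windows $\mathcal{E}(W_1) \supset \mathcal{E}(W_2) \supset \cdots$, each $\mathcal{E}(W_n)$ identifying with a full parameter space $\mathcal{E}_{\rho_A(W_n),\rho_B(W_n)}$ whose central hole is an attracting period-two cylinder of multiplier $\rho_A(W_n)\rho_B(W_n)$; this is the structure underlying Theorem \ref{description}. Each renormalisation step multiplies one parameter by the other, and since the exponent vectors grow under the associated $\mathrm{SL}_2(\mathbb{N})$-matrices, the product $\rho_A(W_n)\rho_B(W_n)$ tends to $0$, so the cylinder multiplier tends to $+\infty$. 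Meanwhile Lemma \ref{estimates} shows each hole keeps a proportion of its own window bounded below. For each large $t$ I would select a renormalisation hole whose parameter interval is comparable to $J_t$, which exists by Lemma \ref{estimates} together with the bounded geometry of the nested windows; pushing it forward by $g_t$ and using the uniform distortion bound of Lemma \ref{distortion}, it becomes a cylinder of $g_tT$ whose interval of directions has length at least a fixed $\epsilon > 0$ while its multiplier tends to $+\infty$. Proposition \ref{divergence2} then forces any subsequence $g_{t_n}T$ to leave $K$, contradicting convergence.

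The main obstacle is the quantitative bookkeeping of the second case: one must check that the renormalisation depth of $J_t$ genuinely tends to infinity as $t \to +\infty$, that some hole keeps a definite proportion of $J_t$ (combining the lower bound of Lemma \ref{estimates} with control on the ratios of successive windows, whose sizes may shrink by unbounded factors precisely when the multiplier is already extreme), and that this proportion survives transport by $g_t$ through Lemma \ref{distortion}. The more delicate conceptual point is the first case: one must legitimately identify the angular width of the interval of directions covered by $C_0$ with its sector angle, and argue via the finiteness of large-angle cylinders together with the triangulability criterion that an angle tending to $\pi$ is incompatible with remaining in a compact part of $\mathcal{D}$.
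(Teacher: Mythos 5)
Your two cases closely track the paper's first two cases (cylinder directions via the sector angle tending to $\pi$, and Cantor directions via Rauzy induction, Lemma \ref{estimates}, the distortion Lemma \ref{distortion} and Proposition \ref{divergence2}), and the bookkeeping you flag there is carried out in the paper exactly as you envisage (choose $n(t)$ with $I_{n(t)+1} \subset [-\alpha(t),\alpha(t)] \subset I_{n(t)}$ and use the hole of $I_{n(t)}$ or $I_{n(t)+1}$). The genuine gap is that your case analysis is not exhaustive: the paper needs a trichotomy, and the case you are missing is the \emph{door direction}, i.e. the situation where the boundary of $T$ is horizontal. Your dichotomy absorbs this case into ``infinitely renormalisable,'' and precisely there your key claim fails: it is not true that the product $\rho_A(W_n)\rho_B(W_n)$ always tends to $0$ along the nested renormalisation windows. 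That decay holds for interior (Cantor) parameters, which after finitely many steps have both multipliers $<1$; but the horizontal direction of $T$ can be the leftmost point of a space $\mathcal{E}_{\rho_A,\rho_B}$ with $\rho_A>1$, $\rho_B<1$ -- the parameter corresponding to Herman's family of piecewise affine circle homeomorphisms -- and along the windows $J_n$ accumulating on that point the multipliers tend to $(1,1)$, not to $0$ (this is the content of the Proposition in the paper's door-direction paragraph).

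In that regime your mechanism inverts rather than degrades: by Lemma \ref{estimates} the hole occupies a proportion roughly $\frac{1-\rho_A\rho_B}{(1+\rho_A)(1+\rho_B)}$ of its window, which tends to $0$ as $(\rho_A,\rho_B)\to(1,1)$. So, far from containing a cylinder whose interval of directions is a definite fraction of $g_t^{-1}([-\frac{\pi}{4},\frac{\pi}{4}])$, a small window around the horizontal direction contains only cylinders of proportionally negligible angle, and Proposition \ref{divergence2} can never be applied; no sequence of cylinders with angle bounded below and multiplier tending to infinity exists in this case. The paper's door-direction argument runs the estimate the other way: the smallness of all holes near the horizontal direction, transported by the bounded-distortion Lemma \ref{distortion}, shows that $\Theta(g_t(T)) \to 0$, and divergence then follows from Proposition \ref{divergence1}, not \ref{divergence2}. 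To complete your proof you must add this third case and prove it by this opposite estimate; as written, your argument is silent (indeed, its central claim is false) exactly when the door of $T$ is horizontal.
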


We distinguish three cases for the proof:

\begin{enumerate}
\item directions for which trajectories are ultimately trapped within a dilation cylinder and accumulate on a periodic orbit (cylinder directions);

\item directions which accumulate on a transversally Cantor set (Cantor directions);

\item directions parallel to the door (Door directions). 

\end{enumerate}

We have proven in Section \ref{directionalfoliations} that these three cases exhaust the dynamical possibilities. \\

\paragraph*{\bf Cylinder directions}

We consider $T$ such that all orbits of the directional foliation in the horizontal direction accumulate onto an attracting periodic orbit, which is to say that the associated first return $(rho_A,rho_B)$-map has a stopping-in-finite-time renormalisation scheme.

This periodic orbit is contained in a cylinder $C_0$ which comprises the horizontal direction. The image of a cylinder containing the horizontal direction in its interior under the action of the geodesic flow is a sequence of cylinders whose angles converge to $\pi$. Using Proposition \ref{divergence1} we see that $g_t(T)$ diverges. \\

\begin{figure}[!h]
	\begin{center}
			\def\svgwidth{0.5 \columnwidth}
			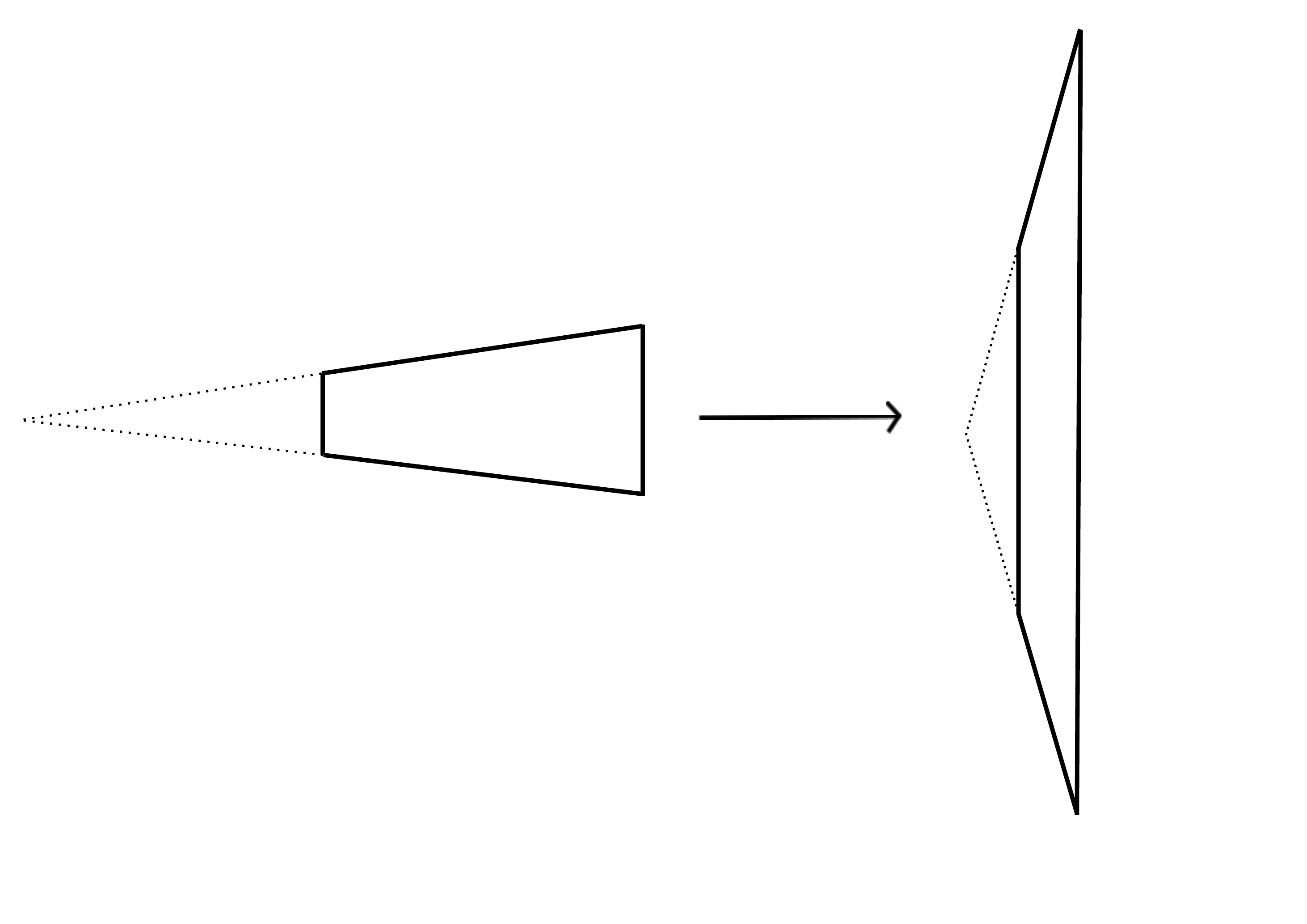
	\end{center}
	\caption{On the left, a fundamental domain for the actior of $z \mapsto 2 z$ of the angular sector of angle $\theta$. On the left, its image under the linear matrix $g_t$, which is again a fundamental domain for $z \mapsto 2 z$ but on the angular sector $g_t(\theta)$, where we abused the notation in also denoting by $g_t$ the projective action.}
\label{figcylindre3}
\end{figure}

\vspace{3mm}

\paragraph*{\bf Cantor directions}

Cantor directions are slightly more complicated to analyse. Consider $T$ such that its horizontal foliation as an invariant Cantor set. We can choose a small neighbourhood of $0 \in S^1$ for which the corresponding foliations can be identified by means of a first return map to an interval of a family $\mathcal{E}_{\rho_A,\rho_B}$ as we have done in Section \ref{directionalfoliations}, with both $\rho_A$ and $\rho_B$ strictly smaller than $1$.

\vspace{2mm} Now, consider the action of the geodesic flow $g_t$. It induces an action on the set of directions which has the following properties: $0\in S^1$ (the horizontal direction) is a repelling fixed point of $g_t$ for $t > 0$ and the derivative at $0$ is $\exp(t)$. Thus, if one wants to understand the behaviour of the sequence $g_t \cdot T$, one has to understand the dynamics of the foliations of angle very close to $0$. This is achieved using the induction introduced in Section \ref{directionalfoliations}. 
\noindent We now work with a small interval $I$ of parameters in $\mathcal{E}_{\rho_A,\rho_B}$  which identifies to a neighbourhood of the horizontal direction in $S^1$. Let $e_0$ the point in $\mathcal{E}_{\rho_A,\rho_B}$ corresponding to $T$. The Rauzy induction provides us with a sequence of nested intervals $(I_n)_{n \in \mathbb{N}}$ with the following properties

\begin{enumerate}

\item $I_{n+1}$ is defined inductively by choosing the left or right interval defined by Rauzy induction to which $e_0$ is the parameter corresponding to the horizontal foliation on $T$ belongs.

\item For all $n$, $I_n \subset  \mathcal{E}_{\rho_A,\rho_B}$;

\item $I_{n+1} \subset I_n$;

\item $I_{n}$ is an interval corresponding to a step in the Rauzy induction;

\item $\bigcap_n{I_n} = e_0$.
\end{enumerate}

Lemma \ref{estimates} shows that because the multipliers associated to the families of intervals $\mathcal{E}_{\rho_A,\rho_B}$ exhausted by the induction tend to $0$, the proportional size of the hole contained within $I_{n}$ tends to $1$ as $n$ goes to infinity.

\vspace{2mm} Consider the interval $g_t^{-1}([-\frac{\pi}{4}, \frac{\pi}{4}]) = [-\alpha(t),\alpha(t)]$ with $\alpha(t) \rightarrow 0$ when $t \rightarrow + \infty$.  On this interval, $g_t$ acts with bounded distortion. For any $t$ there exists $n(t)$ that $I_{n(t)+1} \subset [-\alpha(t),\alpha(t)] \subset I_{n(t)}$. \\

Thus for $t$ large enough, $ [-\alpha(t),\alpha(t)] $ contains a cylinder of angle $\theta(t)$ such that the ratio 

$$ \frac{\theta(t)}{|[-\alpha(t),\alpha(t)]|} $$ is uniformly bounded from below (this cylinder is either the one whose corresponding angular sector is contained within $I_{n(t)} \setminus I_{n(t)+1}$ or within $I_{n(t)+1} \setminus I_{n(t)+2}$.

Lemma \ref{distortion} ensures that the ratio 

$$\frac{\tilde{\theta}(t)}{\frac{\pi}{2}} $$ where $\tilde{\theta}(t)$ is the angle of the image of thus cylinder in the surface $g_t(T)$ is also uniformly bounded. We have therefore established that any time $t>0$ there is a cylinder in $T$ whose angular sector is contained in a neighbourhood of zero which is zoomed out by $g_t$ to a cylinder of angle uniformly bounded below. But when $t$ tends to infinity, the multiplier of such a cylinder tends to infinity (this is a consequence of the discussion about the Rauzy induction on $\mathcal{E}_{\rho_A,\rho_B}$ for $\rho_A$ and $\rho_B < 1$). Lemma \ref{divergence2} therefore implies that the sequence $g_t(T)$ diverges.

\vspace{3mm}
\paragraph*{\bf Door direction}

Recall that the "door direction" is the direction of the boundary component of   $T$. We assume in this paragraph that it is the horizontal direction. We identify a neighbourhood on the right of $0 \in S^1$ to a neighbourhood of the leftmost point in $\mathcal{E}_{\rho_A,\rho_B} \simeq [0,1]$ for a $\rho_A>1$ and $\rho_B < 1$.  The discussion on Rauzy induction yields the following fact

\begin{proposition}

Let leftmost point in  $\mathcal{E}_{\rho_A,\rho_B}$ (corresponding to the door direction) is accumulated by a sequence of nested intervals $(J_n)_{n\in \mathbb{N}}$ such that each $J_n$ identifies with a $\mathcal{E}_{\rho_A^n,\rho_B^n}$ with $(\rho_A^n,\rho_B^n) \rightarrow (1,1)$ when $n \rightarrow +\infty$.

\end{proposition}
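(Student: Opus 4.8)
The plan is to read off, from the Rauzy induction, the precise itinerary of renormalisation moves attached to the leftmost point and to show that, in logarithmic multiplier coordinates, this itinerary is nothing but the slow (subtractive) Euclidean algorithm. First I would describe the intervals $J_n$. The leftmost point is the extremal point of $\mathcal{E}_{\rho_A,\rho_B}$ whose first-return map is the piecewise-affine circle homeomorphism with two breaks determined by $(\rho_A,\rho_B)$. Among the possible moves at each step, exactly one keeps the first multiplier strictly larger than $1$; following this distinguished branch produces an increasing sequence of words $W_n$ (each a prefix of the next), and I set $J_n := \mathcal{E}_{\rho_A,\rho_B}(W_n)$. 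By the inductive structure already established, each $J_n$ identifies with a full parameter space $\mathcal{E}_{\rho_A^n,\rho_B^n}$ with $\rho_A^n>1$, the $J_n$ are nested, and they all abut the leftmost point; that they shrink to it will follow from the recursion below.

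Next I would translate the multiplier recursion into the coordinates $a_n:=\log\rho_A^n>0$ and $c_n:=-\log\rho_B^n>0$, starting from $a_0=\log\rho_A>0$, $c_0=-\log\rho_B>0$. Using the two induced-multiplier rules — the move with winner $A$ sends $(\rho_A,\rho_B)$ to $(\rho_A,\rho_A\rho_B)$ and the move with winner $B$ sends it to $(\rho_A\rho_B,\rho_B)$ — together with the alternative of the general-case Proposition (when $\rho_A\rho_B>1$ one has $\mathcal{E}_{\rho_A,\rho_B}=\mathcal{E}_{\rho_A,\rho_B}(L)$, so the winner is forced to be $B$), the door itinerary reads as follows. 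When $a_n>c_n$, equivalently $\rho_A^n\rho_B^n>1$, the move is forced and yields $(a_{n+1},c_{n+1})=(a_n-c_n,c_n)$; when $a_n<c_n$, equivalently $\rho_A^n\rho_B^n<1$, the branch keeping $\rho_A^n>1$ yields $(a_{n+1},c_{n+1})=(a_n,c_n-a_n)$. In both cases one subtracts the smaller of $a_n,c_n$ from the larger, which is exactly the subtractive Euclidean algorithm applied to the pair $(a_0,c_0)=(\log\rho_A,-\log\rho_B)$.

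The conclusion then follows from the elementary behaviour of that algorithm: when $\log\rho_A/\log\rho_B\notin\QQ$ it never terminates and $(a_n,c_n)\to(0,0)$, hence $(\rho_A^n,\rho_B^n)\to(1,1)$ as claimed. Moreover the two cases alternate infinitely often in this irrational regime (remaining in one case forever would force the other coordinate to vanish, i.e. rationality), and each passage through the case $a_n<c_n$ is a genuine proper subdivision of the current interval; this gives $|J_n|\to 0$ and $\bigcap_n J_n=\{\text{door point}\}$, so the $J_n$ do accumulate at the leftmost point. I expect the main obstacle to be bookkeeping rather than analysis: one must pin down the orientation conventions of the induction (which of $\mathcal{E}(L)$, $\mathcal{E}(R)$ carries the distinguished branch after each identification, since renormalisation may reverse orientation) carefully enough to be certain that this branch is the one abutting the leftmost point, and that the forced moves made during a run with $\rho_A^n\rho_B^n>1$ merely relabel the interval without shrinking it. Finally, the degenerate case $\log\rho_A/\log\rho_B\in\QQ$ terminates the algorithm with one multiplier equal to $1$; this is precisely the direction limiting onto a translation structure in the boundary of $\mathcal{D}$, consistent with the discrete-holonomy alternative of the main theorem.
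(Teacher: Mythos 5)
Your proposal is correct, and it is essentially the argument the paper intends — with the caveat that the paper offers no proof at all of this proposition (it is asserted as a mere consequence of ``the discussion on Rauzy induction''), so your reconstruction is a genuine filling-in of details rather than a paraphrase. Your two multiplier rules ($A$ wins: $(\rho_A,\rho_B)\mapsto(\rho_A,\rho_A\rho_B)$; $B$ wins: $(\rho_A,\rho_B)\mapsto(\rho_A\rho_B,\rho_B)$), the forced move when $\rho_A\rho_B>1$, and the resulting subtractive Euclidean algorithm on $(\log\rho_A,-\log\rho_B)$ match the paper's induction scheme exactly, and your convergence argument (monotonicity of both coordinates plus the fact that each type of move must occur infinitely often) is sound. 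Two remarks. First, the orientation bookkeeping you flag as the main obstacle can be settled without tracking orientations of the identifications: by the paper's own discussion of Herman's family, the door point is the unique parameter whose first-return map is a piecewise affine circle \emph{homeomorphism}, i.e. the surjective map; since the first return of a surjective injective map to any subinterval is again surjective, the door point is carried at every stage to the extremal (surjective) point of the renormalised family, and a direct computation shows that at the surjective point $A$ wins precisely when $\rho_A\rho_B<1$ — so the door branch is indeed your distinguished branch keeping the first multiplier $>1$. Second, you are right that an irrationality hypothesis is needed and the paper silently omits it: if $\log\rho_A/\log\rho_B\in\QQ$, the algorithm reaches $\rho_A^n\rho_B^n=1$ in finitely many steps, after which one multiplier is frozen at $1$ (the parameters stabilise at $(\rho,1)$ or $(1,\rho)$ with $\rho\neq 1$), so the conclusion $(\rho_A^n,\rho_B^n)\to(1,1)$ fails as literally stated; this degenerate case is exactly the discrete-holonomy alternative of the main theorem, and the divergence argument for the door direction would need a separate (easier) treatment there. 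Your heuristic that the forced runs merely relabel $J_n$ while each genuine split shrinks it by a definite factor is also the right mechanism for $\bigcap_n J_n=\{\text{door point}\}$; to make it airtight one should either bound the distortion of the successive identifications or invoke the paper's result that infinitely renormalisable parameters form a totally disconnected set, so the nested intersection cannot contain an interval.
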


Applying Lemma \ref{estimates} to pairs $(\rho_A,\rho_B)$ close to $(1,1)$ we can deduce the following:

\begin{proposition}

For any $\epsilon>0$, there exists a neighbourhood $U_{\epsilon}$ of $0\in S^1$ such that for any cylinder whose angular sector is $[\theta_1,\theta_2] \subset U_{\epsilon}$ we have

$$  \frac{|[\theta_1,\theta_2]|}{[0, \theta2]|} \leq \epsilon.$$

\end{proposition}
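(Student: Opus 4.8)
The plan is to pass to the parameter coordinate on $\mathcal{E}_{\rho_A,\rho_B}$ and read the desired ratio off Lemma~\ref{estimates}. First I would fix the smooth identification $\phi$ of a right-neighbourhood of $0\in S^1$ with a neighbourhood of the leftmost point of $\mathcal{E}_{\rho_A,\rho_B}$ (with $\rho_A>1>\rho_B$), normalised so that the door direction $0$ corresponds to the parameter $0$. Since $\phi$ is a diffeomorphism with $\phi(0)=0$, the mean value theorem shows that for nested intervals $[\theta_1,\theta_2]\subset[0,\theta_2]$ the ratio $|\phi([\theta_1,\theta_2])|/|\phi([0,\theta_2])|$ differs from $|[\theta_1,\theta_2]|/|[0,\theta_2]|$ by the factor $\phi'(\xi_1)/\phi'(\xi_2)$ with $\xi_1,\xi_2\in[0,\theta_2]$, which tends to $1$ as the intervals shrink. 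Hence, after shrinking the neighbourhood, it suffices to prove the estimate in parameter coordinates: holes sitting close to the leftmost point $0$ of $\mathcal{E}_{\rho_A,\rho_B}$ are short relative to the distance from $0$ to their far endpoint.

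By the Proposition on the nested intervals $(J_n)$ we have $J_n\downarrow\{0\}$ with $J_n\simeq\mathcal{E}_{\rho_A^n,\rho_B^n}$, $(\rho_A^n,\rho_B^n)\to(1,1)$, and $J_{n+1}$ the left piece $\mathcal{E}(L)$ of $J_n$; the splitting forces $\rho_A^n\rho_B^n<1$, so the hole $\mathcal{E}(H)$ of $J_n$ is non-empty. Writing $J_n=[0,L_n]$ and feeding $(\rho_A^n,\rho_B^n)$ into Lemma~\ref{estimates}, the pieces $\mathcal{E}(L),\mathcal{E}(H),\mathcal{E}(R)$ occupy, from the door side,
\[
[0,\lambda_n L_n],\qquad [\lambda_n L_n,(\lambda_n+h_n)L_n],\qquad [(\lambda_n+h_n)L_n,L_n],
\]
with $\lambda_n=\tfrac{\rho_B^n}{1+\rho_B^n}\to\tfrac12$ and $h_n=\tfrac{1-\rho_A^n\rho_B^n}{(1+\rho_A^n)(1+\rho_B^n)}\to 0$. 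Given a cylinder, i.e. a hole $[\theta_1,\theta_2]$ of the renormalisation tree, let $n$ be the largest index with $[\theta_1,\theta_2]\subset J_n$ (finite, since holes have positive length and $\bigcap_n J_n=\{0\}$). By maximality the hole is not contained in $J_{n+1}=\mathcal{E}(L)$, so it lies in $\mathcal{E}(H)\cup\mathcal{E}(R)$ of $J_n$ and its far endpoint obeys $\theta_2\ge\lambda_n L_n$, whence $|[0,\theta_2]|\ge\lambda_n L_n$.

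It remains to bound the hole's own length; I aim for length $\le\tfrac{\epsilon}{6}L_n$, which after the distortion factor $2$ of the first paragraph yields the ratio $\le\epsilon$. If the hole is $\mathcal{E}(H)$ of $J_n$ this is immediate: its length is $h_nL_n$ and the ratio is at most $h_n/\lambda_n\to0$. Otherwise the hole lies strictly inside $\mathcal{E}(R)$ of $J_n$, whose parameters $(\rho_A^n\rho_B^n,\rho_B^n)$ are both $<1$ and tend to $(1,1)$; this is the technical heart. I would split by the depth $k$ of the hole in the sub-tree of $\mathcal{E}(R)$. Every induction step multiplies an interval's length by a factor $\tfrac{\rho}{1+\rho}<\tfrac12$ (all parameters there being $<1$), so a hole at depth $k\ge k_0$ has length $<2^{-k_0}L_n$, which is below $\tfrac{\epsilon}{6}L_n$ once $2^{-k_0}<\epsilon/6$. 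For the bounded depths $k<k_0$, the node $S$ carrying the hole has parameters that are products of at most $k_0$ numbers each tending to $1$, hence within $\delta_n\to0$ of $(1,1)$ uniformly in $k<k_0$; Lemma~\ref{estimates} then bounds the relative length of the top hole of $S$ by some $\eta(\delta_n)\to0$, so the hole has length $\le\eta(\delta_n)\,|S|\le\eta(\delta_n)L_n$. Choosing $N$ so large that for $n\ge N$ one has $\lambda_n\ge\tfrac13$, $h_n<\tfrac{\epsilon}{6}$, $\eta(\delta_n)<\tfrac{\epsilon}{6}$ and $\phi$ has distortion $\le 2$ on $J_n$, every hole inside $J_N$ has length $\le\tfrac{\epsilon}{6}L_n\le\tfrac{\epsilon}{2}\lambda_nL_n\le\tfrac{\epsilon}{2}|[0,\theta_2]|$ in parameter coordinates; the distortion bound upgrades this to the ratio $\le\epsilon$ in $S^1$, and we take $U_\epsilon=\phi^{-1}(J_N)$.

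The main obstacle is precisely this uniform control over the whole sub-tree of $\mathcal{E}(R)$: one must isolate a modulus $\eta(\delta)\to0$ such that any single hole of an $\mathcal{E}_{\rho_A,\rho_B}$ with parameters within $\delta$ of $(1,1)$ has length at most $\eta(\delta)$ times that interval, and combine it with the geometric decay of interval lengths with depth. This is what promotes the transparent estimate for the main holes $\mathcal{E}(H)$ to \emph{all} cylinders accumulating at the door; the positioning remark following Lemma~\ref{estimates} (hole near the midpoint, length proportional to $1-\rho_A\rho_B$) is the heuristic behind it.
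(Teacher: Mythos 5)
Your proof follows the route the paper itself takes (its entire justification of this Proposition is the phrase ``Applying Lemma \ref{estimates} to pairs $(\rho_A,\rho_B)$ close to $(1,1)$''), and your depth decomposition inside the far pieces is exactly the sort of completion that sketch needs, since cylinders correspond to holes of arbitrary depth of the induction tree and not only to the top holes of the $J_n$. But the step carrying all of your final inequalities has a genuine gap: the claim that the piece of $J_n$ adjacent to the door has relative size $\lambda_n\to\frac12$, so that every hole not contained in $J_{n+1}$ satisfies $|[0,\theta_2]|\geq\lambda_nL_n\geq\frac13L_n$. The formulas of Lemma \ref{estimates} are only valid when $\rho_A,\rho_B\leq 1$, i.e.\ when every position of the discontinuity yields an injective map and $\mathcal{E}_{\rho_A,\rho_B}$ is a full interval. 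Along the door's renormalisation path one parameter is always $>1$ (the pair $(\log\rho_A^n,-\log\rho_B^n)$ evolves by the subtractive Euclidean algorithm, which preserves the signs), and for $\rho_A>1$ the injectivity requirement truncates the parameter interval exactly at the surjective circle map, that is, exactly at the door. Computing with the truncated interval, the door-adjacent piece (the $A$-winner piece, the one identified with $\mathcal{E}_{\rho_A,\rho_A\rho_B}$) has relative size
$$\frac{1-\rho_A\rho_B}{(1+\rho_A)(1-\rho_B)}\;\approx\;\frac{-\log(\rho_A\rho_B)}{2\,|\log\rho_B|}\,,$$
and not $\rho/(1+\rho)$. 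This quantity is governed by the continued fraction of $\log\rho_A/\log\rho_B$: at the stages where the Euclidean algorithm is about to change letter it becomes arbitrarily small whenever the partial quotients are large (and even for golden-mean-type holonomy it is about $0.19<\tfrac13$). So $\lambda_n\not\to\tfrac12$, the bound $|[0,\theta_2]|\geq\tfrac13L_n$ is false, and knowing that a hole has length $\leq\tfrac{\epsilon}{6}L_n$ then implies nothing, because its distance to the door may be a vanishing fraction of $L_n$.

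The Proposition is nevertheless true, but through a cancellation of one order that your bookkeeping discards: near the door, hole lengths are one order smaller than distances to the door. For instance the top hole of $J_n$ has length approximately equal to the door-adjacent piece times $\tfrac12(\log\rho_A^n-\log\rho_B^n)$, and an analogous bound, uniform in the depth, holds for every hole of the subtree rooted at the far piece when compared with its own distance to the door; this yields a ratio $O\bigl(\max(\log\rho_A^n,|\log\rho_B^n|)\bigr)\to 0$ for all cylinders in $J_n\setminus J_{n+1}$. The repair is therefore to compare each hole with the door-adjacent pieces at its own scale, never with $L_n$. Two secondary points: your $L/R$ labels are permuted relative to the induction (harmless, since all you use is that the far piece has both parameters $<1$, which is correct), and, like the paper, you transport the relative lengths of Lemma \ref{estimates} through the $n$-fold Rauzy identifications as if these were affine; they are projective maps, so a bounded-distortion argument is needed at that point as well.
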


Applying Lemma \ref{distortion}, we obtain that $\Theta(g_t(T)) \longrightarrow 0 $ when $t$ tends to $+ \infty$. By Proposition \ref{divergence1} we get that the sequence $g_t(T) $ diverges. This concludes the proof of Theorem

\section{Comments and open problems}

\label{comments}

We conclude this article with a few discussions on related problems and open problems. \\

\paragraph*{\bf The $\mathrm{SL}_2(\mathbb{R})$-action} Understanding the dynamical properties of the action of $\mathrm{SL}_2(\mathbb{R})$ is key to the understanding of fine geometric properties of dilation surfaces. For instance, it is expected that closed, connected and $\mathrm{SL}_2(\mathbb{R})$-invariant sets corresponds to dilation surfaces sharing special geometric properties.  \\

\noindent We know of two sources of remarkable $\mathrm{SL}_2(\mathbb{R})$-invariant sets:

\begin{enumerate}

\item triangulability and its variations, which gives rise to natural open $\mathrm{SL}_2(\mathbb{R})$-invariant sets;

\item linear holonomy with value in a discrete subgroup of $(\mathrm{R}_+, \times)$ which defines closed invariant sets.

\end{enumerate}

We also know of non-trivial "Veech surfaces" (see \cite{BFG}) and it seems reasonable to suspect the existence of other type of $\mathrm{SL}_2(\mathbb{R})$-orbit closures. A first vague open problem is 

\begin{problem}
Classify $\mathrm{SL}_2(\mathbb{R})$-orbit closures for the $\mathrm{SL}_2(\mathbb{R})$-action on the moduli space of dilation surfaces. 
\end{problem}

A more particular problem that we think to be of interest is that of the existence of a dense orbit. Of course, the $\mathrm{SL}_2(\mathbb{R})$-action preserves triangulability and the singularity type. Bearing this in mind, we formulate the following conjecture:

\begin{conjecture}
\label{conj1}
Let $\mathcal{T}$ be the open subset of a stratum of dilation surfaces of genus at least $2$, consisting of triangulable dilation surface. Then there exists a $\Sigma \in \mathcal{T}$ such that 

$$\overline{ \mathrm{SL}_2(\mathbb{R}) \cdot \Sigma} = \mathcal{T}.$$
\end{conjecture}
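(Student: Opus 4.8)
Although Conjecture~\ref{conj1} lies beyond the methods of this article, we outline a plausible line of attack. The plan is to reduce the existence of a dense orbit to a topological transitivity statement and then to establish the latter by hand. Since $\mathcal{T}$ is second countable and Baire, it suffices to prove that the $\mathrm{SL}_2(\mathbb{R})$-action on $\mathcal{T}$ is topologically transitive: if for every pair of non-empty open sets $U,V\subset\mathcal{T}$ there is $A\in\mathrm{SL}_2(\mathbb{R})$ with $A\cdot U\cap V\neq\emptyset$, then the set of surfaces whose orbit is dense is a dense $G_\delta$, hence non-empty, which is exactly the desired conclusion. As a first step I would set up, in the spirit of Section~\ref{secroom}, a polygonal model and a local parametrisation of the stratum by \emph{shape data} (the developed polygon, on which $\mathrm{SL}_2(\mathbb{R})$ acts linearly) together with \emph{linear holonomy data} (the logarithms of the holonomy multipliers). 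The crucial structural observation, already visible in the one-holed torus, is that the $\mathrm{SL}_2(\mathbb{R})$-action preserves the linear holonomy representation (it only modifies the translation parts of the transition maps), yet the associated linear holonomy group is \emph{not} a continuous invariant, so that orbit closures may still cross between surfaces with discrete and with dense holonomy groups.

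I would then try to reproduce, in this higher-dimensional setting, the interplay between the $\mathrm{SL}_2(\mathbb{R})$-action and the mapping class group action that drove the proof of Proposition~\ref{propsl2dense}, splitting transitivity into two ingredients. The first is arithmetic: the log-holonomy of a surface of genus $g$ is a point of a finite-dimensional vector space ($\simeq\mathbb{R}^{2g}$ in the closed case) on which the mapping class group acts through its symplectic action on homology, and for a suitably irrational choice one expects this orbit to be dense, just as the $\mathrm{SL}_2(\mathbb{Z})$-orbit of an irrational point is dense in $\mathbb{R}^2$. As in Proposition~\ref{propsl2dense}, the genuine difficulty is that not every mapping class element is usable: one may only compose cut-and-paste moves for which every intermediate model stays triangulable. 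One therefore needs an \emph{admissible-path} refinement of the density of symplectic lattice orbits, generalising the Gauss-algorithm argument used above for $\mathbb{R}^2_+$.

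The second ingredient is geometric, and this is where I expect the principal obstacle. Having matched the holonomy data, one must fill the remaining \emph{shape} directions using $\mathrm{SL}_2(\mathbb{R})$ together with the holonomy-preserving part of the mapping class group. In the one-holed torus the $\mathrm{SL}_2(\mathbb{R})$-orbit was the entire fixed-holonomy fibre, because $\mathrm{SL}_2(\mathbb{R})$ acts transitively on the two-dimensional space of shapes; in genus $g\geq 2$ the fibre of the holonomy map has dimension far larger than $3$, so the three-dimensional $\mathrm{SL}_2(\mathbb{R})$-orbit can never sweep it out, and a new source of mixing transverse to the orbit is required. I would look for this extra invariance in the renormalisation dynamics of Section~\ref{directionalfoliations}: its higher-genus avatar is a zippered-rectangle / generalised interval-exchange induction whose Cantor directions furnish, for each surface, infinite families of cut-and-paste moves transverse to the $\mathrm{SL}_2(\mathbb{R})$-orbit, and the task would be to show that such moves, combined with shearing, densely explore the fibre.

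Finally, I would stress why the standard homogeneous-dynamics toolkit is unavailable, which is what makes the conjecture hard. The Teichmüller flow diverges on $\mathcal{D}$, and is expected to diverge on every such $\mathcal{T}$; consequently there is no finite $\mathrm{SL}_2(\mathbb{R})$-invariant measure and no recurrence, so the measure-rigidity machinery of Ratner and of Eskin--Mirzakhani--Mohammadi that underlies orbit-closure theorems for translation surfaces simply does not apply. The realistic hope is to import, instead, techniques from the homogeneous dynamics of infinite-volume hyperbolic manifolds (compare \cite{McMullenMohammadiOh} and \cite{BenoistOh}), where dense-orbit statements are obtained despite infinite volume through a careful study of unipotent invariance and of the geometry at infinity. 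Adapting such a \emph{creating extra invariance} scheme so as to respect the triangulability constraints is, I expect, where the real work lies.
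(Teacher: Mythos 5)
You are addressing a statement that the paper does not prove: it is posed as Conjecture~\ref{conj1} in the closing section of open problems, so there is no proof of the authors' to compare yours against. Your text, which you yourself frame as a \emph{plausible line of attack}, is likewise not a proof, and it should not be presented as one. Judged as an argument, it has two genuine gaps, and they are precisely the open content of the conjecture. First, the arithmetic step: you assert that for a suitably irrational log-holonomy vector in $\mathbb{R}^{2g}$ the mapping-class-group orbit (acting through the symplectic group) should be dense, \emph{and} that this density survives the constraint that every intermediate cut-and-paste model remain triangulable. The paper's own Proposition~\ref{propsl2dense} shows how delicate this already is for $g=1$: the admissibility constraint forces a bespoke construction (Gauss' algorithm kept inside $\mathbb{R}^2_+$, then positive words in the two elementary matrices), and nothing of the sort is known, or supplied by you, for the symplectic action in genus $g\geq 2$. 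Second, and more seriously, the geometric step: your own dimension count shows that the three-dimensional $\mathrm{SL}_2(\mathbb{R})$-orbit cannot sweep out the fixed-holonomy fibre in genus $g \geq 2$, whereas in the torus case transitivity of $\mathrm{SL}_2(\mathbb{R})$ on the space of shapes is exactly what made the paper's proof work. Your proposed substitute --- that higher-genus Rauzy-type induction furnishes cut-and-paste moves which ``densely explore the fibre'' --- is a hope, not an argument: you give no mechanism producing even one additional direction of invariance in an orbit closure, and this is where the conjecture genuinely lives.

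What is sound in your write-up: the Baire-category reduction (topological transitivity on a second countable, locally compact space implies a dense $G_\delta$ of dense orbits) is valid; the observation that $\mathrm{SL}_2(\mathbb{R})$ preserves the linear holonomy while discreteness of the holonomy group is not a closed condition is correct and consistent with the paper's main theorem; and your identification of the admissibility constraint and of the fibre-dimension obstruction faithfully extrapolates Sections~\ref{secroom} and~\ref{directionalfoliations}. So the proposal maps the difficulty accurately, but it resolves none of it; the statement remains a conjecture after your text exactly as before it.
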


It might be that in some cases, this conjecture has to be slighted modified to take into account the existence of invariant open sets based on refinement of the trinagulability property as \cite{Tahar} indicates.
\vspace{3mm}

\paragraph*{\bf Degenerations} Another direction of research we think is interesting is that of degenerations of dilation surfaces. We know of three different ways for a sequence of dilation surfaces to degenerate. Consider a sequence of dilation surfaces $(\Sigma_n)_{n \in \mathbb{N}}$.

\begin{enumerate}

\item There is a "door" which is collapsed;

\item Each $\Sigma_n$ contains a cylinder $C_n$ of angle $\theta_n \geq \epsilon$ and multiplier $\rho_n \rightarrow \infty$;

\item Each $\Sigma_n$ contains a cylinder $C_n$ of modulus (see \cite{Ghazman2}) which tends to $\infty$.

\end{enumerate}

We pose the following problem:

\begin{problem}
Are there other ways to degenerate that the three listed above?
\end{problem}

\vspace{3mm}

\paragraph*{\bf Genus $2$ surfaces}

It is possible to glue two dilation tori with boundary as considered in this article to form a genus $2$ surface with one singularity of cone angle $6\pi$. The space $\mathcal{R}$ of all such surfaces actually forms a connected components on the stratum of dilation surfaces of genus with one singular point. Such surfaces appear in \cite{DFG,BowmanSanderson}.
\noindent $\mathcal{R}$ is actually isomorphic to the product of two copies of the moduli space of the one-holed tori. Two easy corollary of the results of  the present article are the following:

\begin{enumerate}
\item  all orbits of Teichmüller flow in $\mathcal{R}$ are divergent;

\item a complete description of the dynamics of the directional foliations on those surfaces.

\end{enumerate}

An interesting question an answer to which would further our understanding of dilation surfaces is that of describing the $\mathrm{SL}_2(\mathbb{R})$-action on $\mathcal{R}$. In this case, it can be reduced to studying the diagonal action of triangular matrices on a product of two moduli spaces of one-holed tori for which the boundary direction is horizontal. The existence of a dense $\mathrm{SL}_2(\mathbb{R})$-orbit on $\mathcal{R}$ is equivalent to proving the topological mixing of the $\mathrm{SL}_2(\mathbb{R})$-action on $\mathcal{D}$. It would be a good test of Conjecture \ref{conj1}.

\vspace{3mm}

\paragraph*{\bf Invariant measure} We end this series of comments by mentioning a structural question that is of importance to the authors. In \cite{Ghazman}, the second author draws an analogy between moduli spaces of dilation surfaces and infinite volume hyperbolic manifolds. This analogy is supported by the existence in some low-dimensional cases of a $\mathrm{SL}_2(\mathbb{R})$-invariant measure, of infinite volume, which is equivalent to the Lebesgue measure. The existence (or non-existence) of such a measure is still unknown. We believe the existence of such a measure to be of capital importance as it would make the aforementioned analogy robust enough to prove interesting theorems about the dynamics of the Teichmüller flow, and would give an ergodic-theoretic framework for the study of the $\mathrm{SL}_2(\mathbb{R})$-action. \\

\bibliographystyle{plain}
	\bibliography{biblio.bib}

\begin{thebibliography}{10}

\bibitem{BFG}
Adrien Boulanger, Charles Fougeron, and Selim Ghazouani.
\newblock Cascades in the dynamics of affine interval exchange transformations.
\newblock {\em To appear in Ergodic Theory and Dynamical Systems}.

\bibitem{BowmanSanderson}
Joshua Bowman and Slade Sanderson.
\newblock Angels' staircases, sturmian sequences, and trajectories on homothety
  surfaces.
\newblock {\em arXiv preprint}, https://arxiv.org/abs/1806.04129.

\bibitem{BressaudHubertMaass}
Xavier Bressaud, Pascal Hubert, and Alejandro Maass.
\newblock Persistence of wandering intervals in self-similar affine interval
  exchange transformations.
\newblock {\em Ergodic Theory Dynam. Systems}, 30(3):665--686, 2010.

\bibitem{CamelierGutierrez}
Ricardo Camelier and Carlos Gutierrez.
\newblock Affine interval exchange transformations with wandering intervals.
\newblock {\em Ergodic Theory Dynam. Systems}, 17(6):1315--1338, 1997.

\bibitem{CunhaSmania}
Kleyber Cunha and Daniel Smania.
\newblock Renormalization for piecewise smooth homeomorphisms on the circle.
\newblock {\em Ann. Inst. H. Poincar\'{e} Anal. Non Lin\'{e}aire},
  30(3):441--462, 2013.

\bibitem{Dumas}
David Dumas.
\newblock Complex projective structures.
\newblock In {\em Handbook of {T}eichm\"{u}ller theory. {V}ol. {II}}, volume~13
  of {\em IRMA Lect. Math. Theor. Phys.}, pages 455--508. Eur. Math. Soc.,
  Z\"{u}rich, 2009.

\bibitem{DFG}
Eduard Duryev, Carlos Fougeron, and Selim Ghazouani.
\newblock Affine surfaces and their veech groups.
\newblock {\em To appear at Journal of Modern Dynamics}.

\bibitem{EskinMirzakhani}
Alex Eskin and Maryam Mirzakhani.
\newblock Invariant and stationary measures for the {${\rm SL}(2,\Bbb R)$}
  action on moduli space.
\newblock {\em Publ. Math. Inst. Hautes \'{E}tudes Sci.}, 127:95--324, 2018.

\bibitem{EskinMirzakhaniMohammadi}
Alex Eskin, Maryam Mirzakhani, and Amir Mohammadi.
\newblock Isolation, equidistribution, and orbit closures for the {${\rm
  SL}(2,\Bbb R)$} action on moduli space.
\newblock {\em Ann. of Math. (2)}, 182(2):673--721, 2015.

\bibitem{FM}
Benson Farb and Dan Margalit.
\newblock {\em A primer on mapping class groups}, volume~49 of {\em Princeton
  Mathematical Series}.
\newblock Princeton University Press, Princeton, NJ, 2012.

\bibitem{Ghazman}
Selim Ghazouani.
\newblock Teichmüller dynamics, dilation tori and piecewise affine circle
  homeomorphisms.
\newblock {\em arXiv preprint}, https://arxiv.org/abs/1803.10129.

\bibitem{Ghazman2}
Selim Ghazouani.
\newblock Une invitation aux surfaces de dilatation.
\newblock {\em \'A paraitre aux Actes du séminaire de théorie spectrale et
  géométrie}.

\bibitem{Herman1}
Michael-Robert Herman.
\newblock Sur la conjugaison diff\'erentiable des diff\'eomorphismes du cercle
  \`a des rotations.
\newblock {\em Inst. Hautes \'Etudes Sci. Publ. Math.}, (49):5--233, 1979.

\bibitem{LiousseMarzougui}
Isabelle Liousse and Habib Marzougui.
\newblock \'{E}changes d'intervalles affines conjugu\'{e}s \`a des
  lin\'{e}aires.
\newblock {\em Ergodic Theory Dynam. Systems}, 22(2):535--554, 2002.

\bibitem{MarmiMoussaYoccoz}
S.~Marmi, P.~Moussa, and J.-C. Yoccoz.
\newblock Affine interval exchange maps with a wandering interval.
\newblock {\em Proc. Lond. Math. Soc. (3)}, 100(3):639--669, 2010.

\bibitem{Masur}
Howard Masur.
\newblock Interval exchange transformations and measured foliations.
\newblock {\em Ann. of Math. (2)}, 115(1):169--200, 1982.

\bibitem{McMullenMohammadiOh2}
Curtis~T. McMullen, Amir Mohammadi, and Hee Oh.
\newblock Horocycles in hyperbolic 3-manifolds.
\newblock {\em Geom. Funct. Anal.}, 26(3):961--973, 2016.

\bibitem{McMullenMohammadiOh}
Curtis~T. McMullen, Amir Mohammadi, and Hee Oh.
\newblock Geodesic planes in hyperbolic 3-manifolds.
\newblock {\em Invent. Math.}, 209(2):425--461, 2017.

\bibitem{Tahar}
Guillaume Tahar.
\newblock Horizon saddle connections, quasi-hopf surfaces and veech groups of
  dilation surfaces.
\newblock {\em arXiv:1908.09595}.

\bibitem{veech2}
W.~A. Veech.
\newblock Delaunay partitions.
\newblock {\em Topology}, 36(1):1--28, 1997.

\bibitem{BenoistOh}
Hee~Oh Yves~Benoist.
\newblock Geodesic planes in geometrically finite acylindrical 3-manifolds.
\newblock {\em arXiv preprint}, https://arxiv.org/abs/1802.04423.

\end{thebibliography}

\end{document}